\newtheorem{theorem}{Theorem}[section]
\newtheorem{corollary}[theorem]{Corollary}
\newtheorem{lemma}[theorem]{Lemma}
\newtheorem{proposition}[theorem]{Proposition}
\theoremstyle{definition}
\newtheorem{definition}[theorem]{Definition}
\newtheorem{remark}[theorem]{Remark}
\newtheorem{note}[theorem]{Note}
\numberwithin{equation}{section}  
\begin{document}

\title{Harmonic Hadamard manifolds and Gauss hypergeometric differential equations}

\author{Mitsuhiro Itoh\footnote{Institute of Mathematics, University of Tsukuba, 1-1-1 Tennodai, Tsukuba-shi, Ibaraki 305-8577, JAPAN;
e-mail : itohm@math.tsukuba.ac.jp}
and
Hiroyasu Satoh\footnote{Department of Human Science and Common Education, Faculty of Engineering, Nippon Institute of Technology, 4-1 Gakuendai, Miyashiro-machi, Minamisaitama-gun, Saitama 345-8501 JAPAN; e-mail : hiroyasu@nit.ac.jp}}


\maketitle

\begin{abstract}
A new class of harmonic Hadamard manifolds,  those spaces called of hypergeometric type, is defined in terms of Gauss hypergeometric equations.
Spherical Fourier transform defined on a harmonic Hadamard manifold of hypergeometric type admits an inversion formula.  A characterization of harmonic Hadamard manifold being of hypergeometric type is obtained with respect to volume density.\bigskip\\
\textbf{Mathematics Subject Classification (2010).} Primary 53C21; Secondary 43A90, 42B10.\medskip\\
\textbf{Keywords.} harmonic manifold, hypergeometric type, spherical Fourier transform, inversion formula.
\end{abstract}

\section{Introduction}\label{intro} 
A Riemann manifold is called harmonic if it admits a solution of Laplace equation $\Delta f = -\sum_{i,j}g^{ij}\nabla_i\nabla_j f = 0$ depending only on the distance. A euclidean space is typically  a harmonic manifold. In fact, on a euclidean space  ${\Bbb R}^n$ there exists a solution of $\Delta f = 0$ written as $f(x) = \vert x \vert^{2-n}$ if $n > 2$,\, $f(x) = \log (\vert x \vert)$ if $n = 2$.
In 1930 H.S. Ruse considered the following; on any Riemannian manifold there exists a solution of $\Delta f = 0$ described as a function of the distance function.
While his attempt has failed, his idea yielded the notion of harmonic manifold, the study of which is an important subject geometers are intensively interested in.
The Laplace equation $\Delta f = 0$ is very important in analyzing natural, especially physical phenomena.
Therefore, a theory of harmonic manifold arising in differential geometry crucially relates to harmonic analysis which analyzes natural phenomena. 

The study of harmonic manifolds started with curvature conditions, e.g., Ledger's formulae. Using these conditions harmonic manifolds of dimension $\leq 4$  are proved to be flat or rank one, locally symmetric (\cite{Lich}). Notice that a harmonic manifold is always Einstein.  
  
  The second stage of study of harmonic manifolds has begun with Lichnerowicz conjecture which asserts that a harmonic manifold must be flat or locally symmetric, rank one. Still, we do not have a complete answer to his conjecture, even there is  partially affirmative progress for a compact case. See \cite{Sz}, \cite{BCG} for this. Counter examples to the conjecture appeared in 1980's by work of Damek and Ricci (\cite{DR}), as non-symmetric non-compact, harmonic manifolds, called Damek-Ricci spaces to non-compact version of Lichnerowicz conjecture. They are two step solvable Lie groups with a left invariant metric.  Notice that a  Damek-Ricci space is  symmetric if and only if it is a rank one symmetric space of noncompact type. For details of Damek-Ricci spaces we refer to \cite{BTV}. 
      
    Radial functions, like the volume density and the mean curvature of geodesic spheres, are essentially significant for analyzing a harmonic manifold. On a harmonic manifold convolution of two radial functions is also radial, and moreover there exists a radial eigenfunction of Laplace-Beltrami operator for each eigenvalue, as illustrated by Szab\'o in \cite{Sz}.

    Let $(X,g)$ be a harmonic Hadamard manifold of volume entropy, volume exponential growth rate $\rho(X,g) = Q > 0$.  Here, a simply connected, complete Riemannian manifold of nonpositive sectional curvature is called an Hadamard manifold.
    Then,  there exists for each $\lambda\in {\Bbb C}$ a radial function $\varphi_{\lambda}(r)$ of $\varphi_{\lambda}(0) = 1$ which satisfies the eigen-Laplace equation $\Delta h = (\frac{Q^2}{4}+\lambda^2)h$, $h = \varphi_{\lambda}(r)$. We call $\varphi_{\lambda}$ a spherical function on $X$. Then, using the $\varphi_{\lambda}$'s, spherical Fourier transform is defined for a radial function $f = f(r)$ of compact support as $f \mapsto {\mathcal H}f(\lambda)$, $\lambda\in {\Bbb C}$;
    \begin{equation}\label{sphfourier}
    {\mathcal H} f(\lambda) := \int_X f(x) \varphi_{\lambda}(x) dv_g = \omega_{n-1}\, \int_0^{\infty} f(r) \varphi_{\lambda}(r) \Theta(r) dr.
    \end{equation}Here, a function $f(x)$, $x\in X$ is identified with $f(r(x))$, $\omega_{n-1}$ is the volume of the unit $(n-1)$-sphere and $\Theta(r)$ denotes volume density of a geodesic sphere of radius $r$.
    
A theory of spherical Fourier transform on symmetric spaces  has been initiated and intensively studied by G. Helgason, especially on  rank one symmetric spaces of non-compact type, which are typical non-compact harmonic manifolds (\cite{H}).
The inversion formula, a Plancherel theorem\, and a Paley-Wiener theorem are established like the euclidean spherical Fourier transform.
    
    Theory of spherical Fourier transform has been developed over a Damek-Ricci space by \cite{ADY, R, DR2, Ri}.  Anker et al. pointed out in \cite{ADY} that a spherical Fourier transform over a Damek-Ricci space is settled in a frame of Jacobi operator and is represented by the aid of Gauss hypergeometric functions. 
We refer to \cite{PS} also for spherical Fourier transform on a non-compact harmonic manifold. In \cite[Proposition 3.10]{PS} Peyerimhoff and Samiou prove that $\displaystyle{{\mathcal H} F = \widehat{{\mathcal A} F}}^{cl}$, where $F$ is any radial distribution on $X$ with compact support and ${\mathcal{A}}$ is the Abel transform. Here the hat $\widehat{(\,\cdot\,)}^{cl}$ is the euclidean classical Fourier transform. So, the spherical Fourier transform ${\mathcal H}$ on $X$ turns out to be an isomorphism in the level of distributions.
    
    The aim of this paper  is to define a subclass of non-compact harmonic manifolds, called harmonic Hadamard manifolds of hypergeometric type and to develop spherical Fourier transform on such a harmonic manifold.

\begin{definition}\label{def}
A harmonic Hadamard manifold of positive volume entropy is said to be of hypergeometric type, if a spherical function and hence every spherical function satisfies a Gauss hypergeometric differential equation under a certain transform of variable; $\displaystyle{z = - \sinh^2 \frac{r}{2}}$.
\end{definition}
 
 For precise definition of hypergeometric type see Definition \ref{defhypergeometric}.  A Damek-Ricci space is of hypergeometric type as indicated in Proposition \ref{DR}.  
 \begin{theorem}\label{maintheorem}
 Let $(X^n,g)$ be a harmonic Hadamard manifold of volume entropy $\rho(X^n,g) = Q > 0$. If $(X,g)$ is of hypergeometric type, then
\begin{enumerate}
\item any spherical function $\varphi_{\lambda}(r)$,\, $\lambda\in \Bbb{C}$ on $(X^n,g)$ can be described as a Gauss hypergeometric function 
 \begin{equation}\label{sphericalgausshyp}
 \varphi_{\lambda}(r) = F\left(\frac{Q}{2}- i \lambda, \frac{Q}{2} + i \lambda, \frac{n}{2}; - \sinh^2 \frac{r}{2}\right)
 \end{equation} 
\item the volume density $\Theta(r)$ of a geodesic sphere $S(q;r)$ has the form 
 \begin{equation}\label{volumedensity}\Theta(r) = k_g
 \, \sinh^{n-1} \frac{r}{2}\, \cosh^{(2Q-(n-1))} \frac{r}{2}, 
 \end{equation}where 
 \begin{equation}\label{k}
 k_g = - \frac{ 2^n}{3 Q - (n-1)}\ {\rm Ric}_g
 \end{equation}
 and ${\rm Ric}_g$ is the Ricci curvature of $(X,g)$.
\end{enumerate}
\end{theorem}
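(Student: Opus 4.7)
The plan is to reduce everything to a comparison between the radial eigenvalue ODE for $\varphi_\lambda$, after the change of variable $z=-\sinh^{2}(r/2)$, and the general Gauss hypergeometric equation; the parameters $a,b,c$ and the precise form of $\Theta$ will emerge together from this comparison. Writing the radial Laplacian gives
\begin{equation*}
\varphi_\lambda''(r)+\frac{\Theta'(r)}{\Theta(r)}\,\varphi_\lambda'(r)+\Bigl(\frac{Q^{2}}{4}+\lambda^{2}\Bigr)\varphi_\lambda(r)=0.
\end{equation*}
Setting $y(z)=\varphi_\lambda(r)$ and using $\sinh^{2}(r/2)=-z$, $\cosh^{2}(r/2)=1-z$, $(dz/dr)^{2}=-z(1-z)$, $d^{2}z/dr^{2}=z-\tfrac{1}{2}$, a direct substitution (and multiplication by $-1$) transforms this into
\begin{equation*}
z(1-z)\,y''(z)+\Bigl[\tfrac{1}{2}-z+\tfrac{\Theta'(r)}{\Theta(r)}\sinh(r/2)\cosh(r/2)\Bigr]y'(z)-\Bigl(\frac{Q^{2}}{4}+\lambda^{2}\Bigr)y(z)=0.
\end{equation*}

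By the hypergeometric-type hypothesis this coincides, for some constants $a,b,c$, with the Gauss equation $z(1-z)y''+[c-(a+b+1)z]y'-aby=0$. Comparing the $y$-coefficient immediately gives $ab=Q^{2}/4+\lambda^{2}$. Comparing the $y'$-coefficient, which must be affine in $z$, forces
\begin{equation*}
\frac{\Theta'(r)}{\Theta(r)}=\frac{c-\tfrac{1}{2}}{\sinh(r/2)\cosh(r/2)}+(a+b)\tanh(r/2),
\end{equation*}
and this logarithmic derivative integrates, via $\int dr/(\sinh(r/2)\cosh(r/2))=2\log\tanh(r/2)$ and $\int\tanh(r/2)\,dr=2\log\cosh(r/2)$, to
\begin{equation*}
\Theta(r)=C\,\sinh^{2c-1}(r/2)\,\cosh^{2(a+b)-2c+1}(r/2).
\end{equation*}
Two asymptotic normalizations now pin down the exponents: the small-$r$ behavior $\Theta(r)\sim r^{n-1}$ of any Riemannian volume density forces $2c-1=n-1$, hence $c=n/2$; matching the large-$r$ behavior $\Theta(r)\sim\mathrm{const}\cdot e^{(a+b)r}$ with the volume-entropy assumption $\rho(X,g)=Q$ forces $a+b=Q$. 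Consequently $a,b$ are the roots $Q/2\pm i\lambda$ of $t^{2}-Qt+(Q^{2}/4+\lambda^{2})=0$. Since $\varphi_\lambda$ is smooth at $r=0$ with $\varphi_\lambda(0)=1$, it must be the unique solution of the hypergeometric equation regular at $z=0$ with value $1$, namely $F(a,b,c;z)$; this proves (\ref{sphericalgausshyp}) and (\ref{volumedensity}) with $k_{g}=C$.

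To identify the constant $k_g=C$ in the Ricci form (\ref{k}), I Taylor-expand the explicit density at $r=0$:
\begin{equation*}
\sinh^{n-1}(r/2)\cosh^{2Q-(n-1)}(r/2)=\Bigl(\tfrac{r}{2}\Bigr)^{n-1}\Bigl(1+\tfrac{3Q-(n-1)}{12}\,r^{2}+O(r^{4})\Bigr).
\end{equation*}
The universal expansion of the volume density on any Einstein manifold (and harmonic manifolds are Einstein) reads $\Theta(r)=r^{n-1}(1-\mathrm{Ric}_{g}\,r^{2}/6+O(r^{4}))$. Matching the leading term gives $C=2^{n-1}$, and matching the $r^{2}$ coefficients yields the relation $3Q-(n-1)=-2\mathrm{Ric}_{g}$ forced by the hypergeometric-type hypothesis. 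Using the latter to rewrite $C=2^{n-1}$ as $-2^{n}\mathrm{Ric}_{g}/(3Q-(n-1))$ completes (ii).

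The most delicate step is the integration producing $\Theta(r)$: the affine-in-$z$ constraint on the $y'$-coefficient must cleanly split $\Theta'/\Theta$ into the two building blocks $1/(\sinh(r/2)\cosh(r/2))$ and $\tanh(r/2)$, and these must integrate in closed form to produce powers of $\sinh(r/2)$ and $\cosh(r/2)$; once this is achieved, the remaining work is careful asymptotic matching. A conceptual point worth flagging is that the identity $3Q-(n-1)=-2\mathrm{Ric}_{g}$ is not independent data but a hidden consequence of the hypergeometric-type hypothesis, which is precisely what makes the stated rational expression for $k_g$ simultaneously well-defined and correct.
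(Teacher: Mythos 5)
Your proof is correct and follows essentially the same route as the paper: transform the radial eigenvalue equation by $z=-\sinh^{2}(r/2)$, match the first-order coefficient against $c-(a+b+1)z$, pin down $a+b=Q$ and $c=n/2$ from the asymptotics at $r\to\infty$ and $r\to 0$ (you phrase these for $\Theta$ where the paper phrases them for $\sigma=\Theta'/\Theta$ via its Lemma on mean-curvature asymptotics, which is equivalent), then integrate and identify $k_g$ from the small-$r$ Taylor expansion of the density, i.e.\ Ledger's formula. Your explicit observation that the leading term forces $k_g=2^{n-1}$ and that $3Q-(n-1)=-2\,\mathrm{Ric}_g$ is an identity imposed by the hypergeometric-type hypothesis is a nice clarification the paper leaves implicit, but it is the same computation.
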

Here 
 \begin{equation}\label{definitionhypergeometric}\displaystyle F\left(a,b,c ; z\right) = \sum_{n=0}^{\infty} \frac{(a)_n (b)_n}{(c)_n} \frac{z^n}{n!} = 1 + \frac{ab}{c} z + \frac{a(a+1)b(b+1)}{c(c+1)} \frac{z^2}{2!} + \cdots
 \end{equation}is the Gauss hypergeometric function, where
\begin{equation}\displaystyle (a)_n := \frac{\Gamma(a +n)}{\Gamma(a)} = a(a+1)\cdots (a + n - 1),\ (a)_0 = 1.
\end{equation} 
Basic materials of the hypergeometric functions needed in this paper will be summarized in section \ref{hypergeometricfunctions}.

Theorem \ref{maintheorem} is verified by using asymptotical formulae of the mean curvature of geodesic spheres, as indicated in Lemma \ref{meancurvature}.

  From (ii) of Theorem \ref{maintheorem} a harmonic Hadamard manifold being of hypergeometric type is characterized as follows.
  
  \begin{theorem}\label{character}
Let $(X,g)$ be a harmonic Hadamard manifold of volume entropy $\rho(X,g) = Q>0$. Then, $(X,g)$ is of hypergeometric type if and only if there exist constants $c_1$, $c_2$ satisfying $c_1 >0$ and $c_1+c_2 > 0$ such that
either the mean curvature $\sigma(r)$  or the volume density $\Theta(r)$ of a geodesic sphere $S(q;r)$ fulfills
\begin{equation} 
\sigma(r) = c_1\,\coth \frac{r}{2} + c_2\, \tanh \frac{r}{2}
\end{equation}for any $r > 0$, or
\begin{equation}
\Theta(r) = k \sinh^{2c_1} \frac{r}{2}\, \cosh^{2c_2} \frac{r}{2} = k \cosh^{2(c_1+c_2)} \frac{r}{2}\, \tanh^{2c_1} \frac{r}{2}
\end{equation}for a constant $k > 0$ and all $r > 0$.
\end{theorem}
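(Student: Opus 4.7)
The plan is to reduce the forward direction to Theorem \ref{maintheorem} and to treat the converse by a direct change of variable. First I would observe that the two formulations of the conclusion (one for $\sigma$, one for $\Theta$) are equivalent via the standard identity $\sigma = \Theta'/\Theta = (\log\Theta)'$: differentiating $\log\bigl(k\sinh^{2c_1}(r/2)\cosh^{2c_2}(r/2)\bigr)$ produces exactly $c_1\coth(r/2) + c_2\tanh(r/2)$, and integrating recovers $\Theta$ up to the multiplicative constant $k$. So it is enough to prove the equivalence for the volume-density version. The forward direction is then immediate from Theorem \ref{maintheorem}(ii) upon setting $c_1 = (n-1)/2$ and $c_2 = Q - (n-1)/2$; the required positivity conditions $c_1 > 0$ and $c_1 + c_2 = Q > 0$ follow from $n\geq 2$ and from the hypothesis on the volume entropy.

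For the converse, assume $\sigma(r) = c_1\coth(r/2) + c_2\tanh(r/2)$ with $c_1 > 0$, $c_1 + c_2 > 0$. Because $\sigma$ is radial on a harmonic Hadamard manifold, the spherical function $\varphi_\lambda$ satisfies the radial eigen-Laplace equation $-\varphi_\lambda'' - \sigma(r)\,\varphi_\lambda' = (Q^2/4 + \lambda^2)\,\varphi_\lambda$. I would then apply the substitution $z = -\sinh^2(r/2)$ and set $\varphi_\lambda(r) = H(z)$. The essential computations are $(dz/dr)^2 = -z(1-z)$, $d^2z/dr^2 = z - 1/2$, together with the key linearisation
$$\sigma(r)\,\frac{dz}{dr} \;=\; -c_1 + (c_1+c_2)\,z,$$
which follows from $\coth(r/2)\sinh(r/2)\cosh(r/2) = \cosh^2(r/2) = 1-z$ and $\tanh(r/2)\sinh(r/2)\cosh(r/2) = \sinh^2(r/2) = -z$. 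I would also note that $\sigma(r) \to c_1 + c_2$ as $r \to \infty$, and since $\rho(X,g) = Q$ this forces $Q = c_1 + c_2$. Putting these together, the radial equation transforms into
$$z(1-z)\,H'' + \bigl(\tfrac{1}{2} + c_1 - (1+c_1+c_2)\,z\bigr)\,H' - \bigl(\tfrac{Q^2}{4} + \lambda^2\bigr)\,H = 0,$$
which is the Gauss hypergeometric equation with parameters $c = \tfrac{1}{2} + c_1$, $a+b = Q$ and $ab = \tfrac{Q^2}{4} + \lambda^2$, so $\{a,b\} = \{Q/2 \pm i\lambda\}$. Hence $(X,g)$ is of hypergeometric type in the sense of Definition \ref{def}.

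The main obstacle is not conceptual but algebraic: the change-of-variable bookkeeping. What makes the argument go through is precisely that $\sigma(r)\,(dz/dr)$ collapses to an affine function of $z$; for a general radial profile $\sigma$ one would obtain a Fuchsian equation with additional singular points rather than a genuine hypergeometric one, so the $\coth$-$\tanh$ form in half-angles is essential. The positivity constraints admit clean interpretations: matching the local Riemannian asymptotic $\sigma(r) \sim (n-1)/r$ with $\sigma(r) \sim 2c_1/r$ as $r\to 0$ forces $c_1 = (n-1)/2 > 0$, while $c_1 + c_2 > 0$ is the positivity of the volume entropy.
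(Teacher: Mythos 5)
Your proposal is correct and follows essentially the same route as the paper: the forward direction is read off from Theorem \ref{maintheorem}(ii), and the converse is obtained by substituting the $\coth$--$\tanh$ form of $\sigma$ into the equation converted under $z=-\sinh^2\frac{r}{2}$ (the paper's (\ref{convertedequat})), with the asymptotics of Lemma \ref{meancurvature} pinning down $c_1=(n-1)/2$ and $c_1+c_2=Q$. You merely write out explicitly the change-of-variable computation that the paper delegates to Lemma \ref{convert}, arriving at the same hypergeometric parameters $c=\tfrac12+c_1=\tfrac{n}{2}$, $a+b=Q$, $ab=\tfrac{Q^2}{4}+\lambda^2$.
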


\begin{remark}
The hypergeometric type can be defined for a class of non-compact complete harmonic manifolds which admits spherical functions including harmonic Hadamard manifolds of volume entropy $Q >0$. However, as Theorem \ref{character} shows, the hypergeometric type can be characterized in terms of the volume density function provided a harmonic manifold is Hadamard.
\end{remark}

\begin{remark}
Nikolayevsky proved that if $(X, g)$ is a harmonic manifold, then the density function $\Theta(r)$ is an exponential polynomial (\cite[Theorem 2]{N}, see also \cite[\S 4]{K2016}).
Theorem \ref{character} characterizes harmonic manifolds whose density functions are special type of exponential polynomials.
\end{remark}

From Theorem \ref{maintheorem}, for a harmonic Hadamard manifold $(X,g)$ with $\rho = Q > 0$ and of hypergeometric type there exist $0 < a \leq b$ such that 
  \begin{equation*}
     a\, e^{Q r} \leq \Theta(r) \leq b\, e^{Qr}
  \end{equation*}
  for $r \geq 1$. So, this means that $(X,g)$ has purely exponential volume growth. Therefore, by the aid of Theorem of Knieper \cite{K} we have  
 \begin{corollary}\label{gromov}
 A harmonic Hadamard manifold $(X,g)$ of volume entropy $\rho(X,g) = Q>0$ and of hypergeometric type has
 purely exponential volume growth and consequently
 \begin{enumerate}
 \item the geodesic flow on the tangent sphere bundle over $X$ is Anosov,
 \item $(X,g)$ is Gromov hyperbolic and
 \item every geodesic $\gamma$ on $X$ is rank one, that is,  the velocity vector field $\gamma'$ is the only parallel Jacobi field along $\gamma$.
 \end{enumerate}
 \end{corollary}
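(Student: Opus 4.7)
The plan is to reduce the corollary to Knieper's theorem from \cite{K} by verifying that $(X,g)$ has purely exponential volume growth. All the geometric content — the Anosov property of the geodesic flow, Gromov hyperbolicity, and rank-one of every geodesic — is already packaged in that theorem for harmonic Hadamard manifolds, so the only work left is to produce the two-sided bound $a\, e^{Qr} \leq \Theta(r) \leq b\, e^{Qr}$ for $r \geq 1$, which the corollary's statement already isolates.

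For this I would start from the explicit formula $\Theta(r) = k_g \sinh^{n-1}(r/2)\, \cosh^{2Q-(n-1)}(r/2)$ supplied by Theorem \ref{maintheorem}(ii), and rewrite the hyperbolic factors as
\[
\sinh\tfrac{r}{2} = \tfrac{1}{2}\, e^{r/2}\, (1 - e^{-r}), \qquad \cosh\tfrac{r}{2} = \tfrac{1}{2}\, e^{r/2}\, (1 + e^{-r}).
\]
Substituting and using that the exponents $n-1$ and $2Q-(n-1)$ sum to $2Q$ yields
\[
\Theta(r) = k_g\, 2^{-2Q}\, e^{Qr}\, (1 - e^{-r})^{n-1}\, (1 + e^{-r})^{2Q-(n-1)}.
\]
For $r \geq 1$ both correction factors lie in a compact positive interval, regardless of the sign of the exponent $2Q-(n-1)$, and the constant $k_g$ is necessarily positive since $\Theta(r)>0$. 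The desired two-sided bound therefore follows with explicit constants, so $(X,g)$ has purely exponential volume growth, at rate matching the volume entropy $Q$ (consistent with $\rho(X,g)=Q$).

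Invoking Knieper's theorem in \cite{K}, purely exponential volume growth on a harmonic Hadamard manifold supplies simultaneously the three statements (i), (ii) and (iii). No step is a genuine obstacle here; the argument is essentially an assembly of Theorem \ref{maintheorem}(ii) with Knieper's result. The only point requiring minor care is the handling of a possibly negative exponent $2Q-(n-1)$ when extracting the constants $a$ and $b$ — but since $r\geq 1$ keeps $1+e^{-r}$ bounded away from both $0$ and $\infty$, raising to a negative power presents no difficulty.
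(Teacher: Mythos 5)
Your proposal is correct and follows the paper's own route exactly: the paper also deduces the two-sided bound $a\,e^{Qr}\leq\Theta(r)\leq b\,e^{Qr}$ for $r\geq 1$ from the explicit density formula of Theorem \ref{maintheorem}(ii) and then cites Knieper's theorem to obtain (i)--(iii). Your explicit factorization of $\sinh(r/2)$ and $\cosh(r/2)$ merely spells out the elementary estimate the paper leaves implicit.
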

 
 The following is a direct application of Theorem \ref{maintheorem}.
 
 \begin{theorem}[\cite{IKPS}]\label{kaehler}
Let $(X,g,J)$ be a K\"ahler Hadamard manifold of $\dim_{\mathbb{C}}
X = m \geq 2$. Assume that $(X,g,J)$ is of volume entropy
{{$\rho(X,g) = m$}} and of Ricci curvature ${\rm Ric}_g = -
\frac{1}{2} (m+1)$ \hspace{0.5mm} and that $(X,g,J)$ is a harmonic
manifold of hypergeometric type. Then, $(X,g,J)$ is
biholomorphically isometric to the complex hyperbolic space
$\mathbb{C}H^m$ of holomorphic sectional curvature $ - 1$.
\end{theorem}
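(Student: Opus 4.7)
The strategy is to apply Theorem~\ref{maintheorem}(ii) to the numerical data of the hypothesis, observe that the resulting volume density of $(X,g)$ coincides exactly with that of the complex hyperbolic space $\mathbb{C}H^m$ of holomorphic sectional curvature $-1$, and then leverage the K\"ahler structure to promote this coincidence of radial densities into a bona fide isometry.

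Substituting $n=2m$, $Q=m$, and $\mathrm{Ric}_g = -\tfrac{1}{2}(m+1)$ into~\eqref{volumedensity}--\eqref{k} one finds $n-1 = 2m-1$, $2Q-(n-1)=1$, $3Q-(n-1)=m+1$, and $k_g = -\dfrac{2^{2m}}{m+1}\bigl(-\tfrac{m+1}{2}\bigr) = 2^{2m-1}$, so that
\begin{equation*}
\Theta(r) \;=\; 2^{2m-1}\,\sinh^{2m-1}\!\tfrac{r}{2}\,\cosh\!\tfrac{r}{2} \;=\; 2^{2m-2}\,\sinh^{2m-2}\!\tfrac{r}{2}\;\sinh r.
\end{equation*}
The factored form on the right is exactly the Jacobian of the exponential map on $\mathbb{C}H^m$ with holomorphic sectional curvature $-1$: the factor $(2\sinh(r/2))^{2m-2}$ accounts for $2m-2$ Jacobi fields in the complex-transverse directions and $\sinh r$ for the one in the $J\gamma'$-direction. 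Equivalently, the mean curvature of geodesic spheres is $\sigma(r) = \tfrac{2m-1}{2}\coth(r/2)+\tfrac{1}{2}\tanh(r/2) = (2m-2)\cdot\tfrac{1}{2}\coth(r/2)+\coth r$, again matching $\mathbb{C}H^m(-1)$ block by block.

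Next I invoke the K\"ahler condition. Since $J$ is parallel, $J\gamma'(t)$ is parallel along each unit-speed geodesic $\gamma$, and the standard identity $R(\,\cdot\,,Y)JZ = JR(\,\cdot\,,Y)Z$ forces the Jacobi operator $\mathcal{J}(t)=R(\cdot,\gamma'(t))\gamma'(t)$ to leave the line $\mathbb{R}J\gamma'(t)$ invariant, with eigenvalue equal to the holomorphic sectional curvature $H(\gamma'(t))$. The Riccati equation $A'+A^{2}+\mathcal{J}=0$ transfers this invariance to the shape operator $A(t)$ of geodesic spheres, producing a splitting $A(t) = A_{1}(t) \oplus A_{2}(t)$ on $\mathbb{R}J\gamma'(t) \oplus W(t)$. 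Combining this with (a) the harmonic hypothesis $\mathrm{tr}(A(t)) = \sigma(t)$, (b) the Einstein trace identity $\mathrm{tr}(\mathcal{J}) = -(m+1)/2$, and (c) the explicit form of $\sigma(r)$ above, I pin down $A_{1}(r) = \coth r$ and $A_{2}(r) = \tfrac{1}{2}\coth(r/2)\,\mathrm{Id}$; feeding these back into the Riccati equation yields $\mathcal{J}(J\gamma') = -J\gamma'$ and $\mathcal{J}\vert_{W} = -\tfrac{1}{4}\,\mathrm{Id}$. In particular $H \equiv -1$, so $(X,g,J)$ is a simply connected complete K\"ahler manifold of constant holomorphic sectional curvature $-1$, hence biholomorphically isometric to $\mathbb{C}H^{m}(-1)$ by the classification of complex space forms.

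The main obstacle is step (c): harmonicity alone only controls the trace of $A(t)$, so an additional ingredient is needed to conclude that the two blocks $A_{1}$ and $A_{2}$ are each radial scalars with the predicted values. I expect this to require a more refined Jacobi-field analysis along $\gamma$, exploiting parallelism of $J$ together with the very specific factorization $\Theta(r) = (2\sinh(r/2))^{2m-2}\cdot \sinh r$, to force radial scalarity of each block. Once that rigidity is in place the remainder of the argument is algebraic and the classification of complex space forms delivers the conclusion.
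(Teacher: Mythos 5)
First, a point of reference: the paper does not prove Theorem \ref{kaehler} at all; it is quoted from \cite{IKPS} with only the remark that it is ``a direct application of Theorem \ref{maintheorem}.'' So your proposal can only be measured against what a complete proof would require. Your first step is correct and is surely the intended ``direct application'': substituting $n=2m$, $Q=m$, ${\rm Ric}_g=-\frac{1}{2}(m+1)$ into \eqref{volumedensity}--\eqref{k} gives $k_g=2^{2m-1}$ and $\Theta(r)=2^{2m-2}\sinh^{2m-2}\frac{r}{2}\,\sinh r$, which is exactly the volume density of $\mathbb{C}H^m$ of holomorphic sectional curvature $-1$, and your identity for $\sigma(r)$ is likewise correct.

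The rest of the argument has two genuine problems. First, the claim that the K\"ahler identity $R(\cdot,\cdot)J=JR(\cdot,\cdot)$ forces the Jacobi operator $R_{\gamma'(t)}$ to preserve the line $\mathbb{R}J\gamma'(t)$ is false for a general K\"ahler metric: $R(J\gamma',\gamma')\gamma'$ is orthogonal to $\gamma'$ but need not be proportional to $J\gamma'$ (already on a generic K\"ahler surface the component $\langle R(J\gamma',\gamma')\gamma',V\rangle$ with $V\perp\{\gamma',J\gamma'\}$ is nonzero). This invariance holds for complex space forms, i.e.\ for the conclusion you are trying to reach, but it is not a consequence of $\nabla J=0$ alone, so the block decomposition $A=A_1\oplus A_2$ is unjustified from the outset. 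Second, and more fundamentally, the step you yourself flag as the obstacle --- passing from the trace information $\operatorname{tr}A=\sigma$ to the scalar values of the individual blocks --- is the entire content of the theorem, and it cannot be extracted from $\Theta(r)$ and $\sigma(r)$ by any amount of manipulation: coincidence of volume densities does not determine a harmonic Hadamard manifold up to isometry, since non-symmetric Damek--Ricci spaces share the density \eqref{volumedensity} (with the same exponents) with rank one symmetric spaces of matching parameters. A complete proof must therefore inject the K\"ahler hypothesis through an actual rigidity mechanism, which is what \cite{IKPS} supplies and what is absent here. As written, the proposal establishes only that $(X,g)$ has the same radial data as $\mathbb{C}H^m(-1)$, not that it is isometric to it.
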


The spherical Fourier transform defined over a harmonic Hadamard manifold $(X,g)$ in (\ref{sphfourier}) admits an inversion formula as exhibited as follows, provided $(X,g)$ is of positive volume entropy and of  hypergeometric type. 
  \begin{theorem}\label{inversionthm}
  { Let $(X,g)$ be a harmonic Hadamard manifold of positive volume entropy and of  hypergeometric type.
  Let $f = f(r)$ be a smooth radial function on $(X,g)$ of compact support. Then
 \begin{equation}\label{inversionformula}
   f(r) = {d}\ \int_{-\infty}^{\infty} \frac{d \lambda}{\vert{\bf c}(\lambda)\vert^2} {\mathcal H}f(\lambda) {\overline\varphi}_{\lambda}(r),
 \end{equation}
 where the spherical functions $\varphi_{\lambda}(r)$ are real valued. 
Here, the constant $d$ is defined as
\begin{eqnarray}
d = \frac{2^{2Q}}{4 \pi\hspace{0.5mm}\omega_{n-1}\hspace{0.5mm}k_g}  = \frac{2^{2Q-3}\Gamma(\frac{n}{2})}{\pi^{n/2+1}\hspace{0.5mm}k_g} 
\end{eqnarray}
and ${\bf c}(\lambda) $ is a Harish-Chandra $c$-function with respect to $\lambda\in {\Bbb R}$, given in \cite[pp.648]{ADY} as
 \begin{equation}\label{harishChandrac}
 {\bf c}(\lambda) =  2^{(Q-2i\lambda)}\ \frac{\Gamma( \frac{n}{2}) \Gamma(2 i\lambda)}{\Gamma(\frac{n}{2}- \frac{Q}{2} + i\lambda) \Gamma(\frac{Q}{2} + i \lambda)}.
  \end{equation}
 }
 \end{theorem}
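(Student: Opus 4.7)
The plan is to reduce the inversion to the classical Jacobi inversion formula by the substitution $s = r/2$. Set $\alpha := (n-2)/2$ and $\beta := Q - n/2$, so that $2\alpha + 1 = n-1$, $2\beta + 1 = 2Q - (n-1)$, and $\alpha + \beta + 1 = Q$. Theorem~\ref{maintheorem}(i) then identifies
\[
\varphi_{\lambda}(r) \;=\; F\!\left(\tfrac{Q}{2}-i\lambda,\,\tfrac{Q}{2}+i\lambda,\,\alpha+1;\,-\sinh^{2} s\right) \;=\; \phi^{(\alpha,\beta)}_{2\lambda}(s),
\]
the Jacobi function of indices $(\alpha,\beta)$ at spectral parameter $2\lambda$. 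Simultaneously, Theorem~\ref{maintheorem}(ii) rewrites the density as $\Theta(r)\,dr = 2 k_{g}\,\sinh^{2\alpha+1}(s)\,\cosh^{2\beta+1}(s)\,ds$, which is a constant multiple of the standard Jacobi weight. Consequently, with $g(s) := f(2s)$, the spherical Fourier transform (\ref{sphfourier}) becomes a constant multiple of the Jacobi transform $\mathcal{J}^{(\alpha,\beta)} g$ evaluated at $2\lambda$.

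Next I would invoke the classical Flensted-Jensen--Koornwinder inversion formula for the Jacobi transform, as recalled in the Damek--Ricci setting in \cite{ADY},
\[
g(s) \;=\; \frac{1}{2\pi}\int_{-\infty}^{\infty} \mathcal{J}^{(\alpha,\beta)}g(\mu)\,\phi^{(\alpha,\beta)}_{\mu}(s)\,\frac{d\mu}{|c_{\alpha,\beta}(\mu)|^{2}},
\]
and change the dummy variable via $\mu = 2\lambda$. A direct comparison of $c_{\alpha,\beta}(2\lambda)$ with the Harish-Chandra function $\mathbf{c}(\lambda)$ in (\ref{harishChandrac}) shows that they agree up to an explicit multiplicative constant depending only on $n$ and $Q$. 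Reality of $\varphi_{\lambda}(r)$ for $\lambda\in\mathbb{R}$ follows from the symmetry $F(a,b,c;z) = F(b,a,c;z)$ together with $\overline{Q/2 + i\lambda} = Q/2 - i\lambda$; hence the conjugation bar in (\ref{inversionformula}) is cosmetic, as asserted in the statement.

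The main obstacle is the careful bookkeeping of constants. The factor $2^{2Q}$ in the stated $d = 2^{2Q}/(4\pi\omega_{n-1}k_{g})$ must be tracked through three contributions: (i) the Jacobian $ds = \tfrac{1}{2}dr$ of the spatial substitution, (ii) the Jacobian $d\mu = 2\,d\lambda$ from rescaling the spectral variable, and (iii) the factor $2^{Q-2i\lambda}$ in the numerator of $\mathbf{c}(\lambda)$ relative to the analogous factor arising in $c_{\alpha,\beta}$. Combining these with the prefactor $2\,\omega_{n-1}\,k_{g}$ from the Jacobi weight normalization and the $1/(2\pi)$ from Jacobi inversion, and using $\omega_{n-1} = 2\pi^{n/2}/\Gamma(n/2)$ to rewrite the denominator, yields precisely the stated value of $d$ and completes the derivation of (\ref{inversionformula}).
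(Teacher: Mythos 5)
Your proposal is correct, but it takes a genuinely different route from the paper. The authors explicitly acknowledge the Jacobi-transform route you follow and deliberately set it aside (``We will make use of the argument of Koornwinder fully in future''); instead they prove the theorem by G\"otze's method: Green's formula on geodesic balls (Lemma \ref{greenformula1}), asymptotics of Wronskians of the solutions $\Phi_{\pm 2\lambda}$ built from the connection formula (\ref{firstsecond}), the Riemann--Lebesgue lemma, and $L^1$-estimates on $h\,L_i/|c|^2$, culminating in Proposition \ref{integral-2}; since that proposition yields the dual identity ${\mathcal H}\widetilde{h}=h$ rather than the inversion formula itself, they must additionally invoke the injectivity of ${\mathcal H}$ (Peyerimhoff--Samiou) and the Paley--Wiener-type Lemma \ref{hormandertrick} to conclude. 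Your reduction to the Flensted-Jensen--Koornwinder inversion formula is shorter and avoids the injectivity step entirely, since it produces $f$ directly; the ingredients you need (the identification $\varphi_{\mu/2}(r)=\phi^{(\alpha,\beta)}_{\mu}(r/2)$, $\Theta(r)=C_g\Omega_{\alpha,\beta}(r/2)$, and the relation between $c_{\alpha,\beta}(2\lambda)$ and ${\bf c}(\lambda)$) are exactly Lemma \ref{coincide} and Lemma \ref{c2lambda} of the paper, and your constant bookkeeping does close up to give $d=1/(4\pi\omega_{n-1}C_g)=2^{2Q}/(4\pi\omega_{n-1}k_g)$. The one point you should make explicit is that the classical Jacobi inversion theorem applies with a purely continuous Plancherel measure (no discrete spectrum) only for suitable indices; here $(\alpha,\beta)=(\tfrac{n}{2}-1,\,Q-\tfrac{n}{2})$ satisfies $\alpha\ge\beta\ge-\tfrac12$ precisely because $\tfrac{n-1}{2}\le Q\le n-1$ (Remark \ref{volumeentropy}), so you are in the admissible range. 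The trade-off is that your argument outsources the hard analysis to the Jacobi-transform literature, whereas the paper's longer argument is self-contained and builds the Wronskian and $c$-function estimates it will reuse for the Plancherel theorem.
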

   \begin{remark}\label{damekriccic0}
   {
   For a Damek-Ricci space $S$ the transform (\ref{sphfourier}) is written as \cite[(2.8)]{ADY} and the inversion formula as in \cite[(2.10)]{ADY}
   \begin{equation*}f(r) = c_0 \int_0^{\infty} {\mathcal H}f(\lambda) \varphi_{\lambda}(r)\frac{ d \lambda}{\vert {\bf c}(\lambda)\vert^2} 
   = \frac{c_0}{2} \int_{-\infty}^{\infty} {\mathcal H}f(\lambda) \varphi_{\lambda}(r)\frac{ d \lambda}{\vert {\bf c}(\lambda)\vert^2},
\end{equation*}
where $c_0 = 2^{k-2} \pi^{-(n/2 + 1)} \Gamma(n/2)$, $k = \dim {\mathfrak z}$ and
\begin{equation}\label{damekriccicfunction}
 {\bf c}(\lambda) = \, 2^{(Q-2i\lambda)}\, \frac{\Gamma(\frac{n}{2})\Gamma(2i\lambda)}{\Gamma(\frac{Q}{2} + i\lambda) \Gamma(\frac{m}{4} + \frac{1}{2} + i\lambda)},
\end{equation}
where $Q = \frac{m}{2} + k$ is the volume entropy of $S$. 
The function ${\bf c}(\lambda)$ of (\ref{damekriccicfunction}) coincides with our $c$-function ${\bf c}(\lambda)$.
Remark \ref{dc0} indicates equality $d = c_0/2$ for the constants $d$ and $c_0$. Thus it is concluded that the spherical Fourier transform on $(X,g)$ turns out to be a natural generalization of the spherical Fourier transform on a Damek-Ricci space. 
See also \cite[Thm\, 15]{R}.  }
 \end{remark}
    A proof of Theorem \ref{inversionthm} is mainly based on the argument of G\"otze (\cite{G}).  We might use a theory of Jacobi transform, a generalization of spherical Fourier transform. See for this \cite{Ko} in which the inversion formula, a Plancherel theorem and a Paley-Wiener theorem are given in a framework of Jacobi transform.
We will make use of the argument of Koornwinder fully in future. However, we focus in this article on the inversion formula over a harmonic manifold of hypergeometric type, using Gauss hypergeometric functions under  Green's formula, familiar and well-known, adopted in \cite{G}.
 
\section{Preliminaries}\label{prelim}
We begin with some basic preliminaries for Hadamard manifolds. We refer to \cite{ISS} for details. Let $(X,g)$ be an Hadamard manifold, namely, a simply connected,
oriented, complete Riemannian manifold with a non-positively curved metric $g$.  Then, the exponential map $\exp_p :
T_pX \rightarrow X; v \mapsto \exp_p v$ is a diffeomorphism.

Let $B(q;r)$ be a geodesic ball in $X$ of center $q$ and radius $r$.
The volume of $B(q; r)$, ${\rm Vol}\hspace{0.5mm}B(q;r)$, is given by integral over $B(q;r)$ of  the Riemannian volume element
$dv_g = \sqrt{\det(g_{ij})(p)}\,dx^1\wedge\cdots\wedge dx^n$ and is expressed as
\begin{equation}
 {\rm Vol}\ B(q;r)  = \int_0^r dt \int_{S(q;t)} dv_{S(q;t)} = \int_0^r  {{ \int_{u\in S_qX} t^{n-1} J(t,u)\
 dudt}},
\end{equation}
with respect to $S_qX$, the space of unit tangent vectors in $T_qX$.  Here $J(t,u)$ is a function given by
\begin{equation}
  J(t,u) = t^{-(n-1)} \sqrt{\det\left(\langle Y_i(t),Y_j(t)\rangle\right)},
 \end{equation}
defined in the following way. Let $\gamma(t) = \exp_q t u$ be
a unit speed geodesic and $\{ e_1(t), \cdots,
e_n(t)\}$ be a parallel orthonormal frame field along $\gamma$ such
that 

\noindent
$\{e_1(0),\cdots, e_n(0)\}$ is an orthonormal frame at
$\gamma(0)$, $e_1(0) = u$ and let $\{Y_2(t),\cdots, Y_n(t)\}$ be a set of perpendicular Jacobi
vector fields along $\gamma$ satisfying $Y_i(0) = 0$ and
$Y'_i(0) = e_i(0)$. Each $Y_i$ is written as $Y_i(t) =  t (d \exp_q)_{t u} e_i(0)$ (see \cite[p.114]{Do}).
  We write $\Theta(t,u) = t^{n-1} J(t,u)$. Then $\displaystyle{\Theta(t,u) = \sqrt{\det\left(\langle Y_i(t),Y_j(t)\rangle\right)}}$ represents the volume density of $ (\exp_q)^{\ast} dv_{S(q; t)} $ over a geodesic sphere $S(q; t)$ with respect to local coordinates  $t, u$, induced by the map $\exp_q$. 
  
  Define an endomorphism, called a Jacobi tensor field 
  \begin{equation}\label{endomorphism}
  A(t) : \gamma^{\perp}(t) \rightarrow \gamma^{\perp}(t); A e_i(t) := Y_i(t), i = 2,\cdots,n.
  \end{equation} Then,  for any $t > 0$ $A(t)$ is invertible and self-adjoint with respect to the inner product $\langle\cdot, \cdot\rangle$ at $\gamma(t)$. We have $\Theta(t,u) = \det A(t)$. Moreover, an endomorphism ${\mathcal S}(t)$ defined by ${\mathcal S}(t) = A'(t) A^{-1}(t)$ and its trace ${\rm tr}\hspace{0.5mm}{\mathcal S(t)} = {\rm tr}\hspace{0.5mm} A'(t) A^{-1}(t)$, respectively, give  shape operator and mean curvature $\sigma(t) = \sigma(t,u)$ of a geodesic sphere $S(q;t)$ at $\gamma(t)$. 
 Notice that $\sigma(t,u) = \nabla_{\partial t}\,\Theta(t,u)/\Theta(t,u)$.
The Busemann function $b_{\gamma}$ associated to a geodesic $\gamma$ is defined as $\displaystyle{b_{\gamma}(x) = }$\, $\displaystyle{\lim_{t\rightarrow \infty} (d(x,}$\, $\displaystyle{\gamma(t)) - t)}$, $x\in X$.  The function $b_{\gamma}$ is $C^2$, convex and of unit gradient $\nabla b_{\gamma}$. Note that the Busemann function $b_{\gamma}$ fulfills $b_{\gamma}(\gamma(t)) = -t$ and $\vert b_{\gamma}(x)\vert \leq d(\gamma(0),x)$, $\forall x\in X$. The Hessian $\nabla d b_{\gamma}$ is positive semi-definite. Denote by $\partial X$ the ideal boundary of an Hadamard manifold $X$, which is the quotient space of all geodesic rays on $X$ modulo asymptotic equivalence among all geodesic rays.
Let $o\in X$ be a fixed point and $\theta\in \partial X$ be an arbitrary ideal boundary point.
Then, there exists a unique geodesic $\gamma : (-\infty,\infty) \rightarrow X$ such that  $\gamma\vert_{[0,\infty)}$ represents $\theta$ and $\gamma(0) = o$.  So one defines the Busemann function $b_{\theta} :\, X \rightarrow {\Bbb R}$ associated with $\theta$ as $b_{\theta}(x) := b_{\gamma}(x)$, $x\in X$. 

A level hypersurface of the Busemann function $b_{\theta}$ associated with $\theta$ is called a horosphere centered at $\theta$, denoted by ${\mathcal H}_{(\theta, t)} := b_{\theta}^{-1}(t)$. Then, 
since $b_{\theta}$ is surjective as a map to ${\Bbb R}$ with $\vert \nabla b_{\theta}\vert \equiv 1$, for any fixed $\theta\in\partial X$\, the function $b_{\theta}$ is a Riemannian submersion from $(X,g)$ onto $({\Bbb R}, dt^2)$ with fibre ${\mathcal H}_{(\theta,t)}$, $t\in{\Bbb R}$ such that at any $x\in{\mathcal H}_{(\theta,t)}$ the tangent space $T_xX$ admits an orthogonal direct sum; $T_xX = T_x{\mathcal H}_{(\theta,t)}\oplus {\Bbb R}\hspace{0.3mm}\nabla  b_{\theta}\hspace{0.1mm}_x$. Therefore, the metric $g$ of $X$ is represented by $g_x= g_t \oplus dt^2$, $g_t = g\vert_{{\mathcal H}_{(\theta,t)}}$ and hence the volume form $dv_g$ by $dv_g = dt \cdot d\sigma_t$, $d\sigma_t$ is the Riemannian volume form of the metric $g_t$ of ${\mathcal H}_{(\theta,t)}$.\, Then, integration of a function $f$ over $X$ is represented  by integrating along the fibres, i.e., horospheres and then over the real line as
\begin{eqnarray}\label{horosphericalintegration}
\int_X f(x) dv_g(x) = \int_{-\infty}^{+\infty} dt \int_{x\in{\mathcal H}_{(\theta,t)}} f(x) d\sigma_t.
\end{eqnarray}

The minus signed Hessian of $b_{\theta}$, $-\hspace{0.5mm}\nabla d b_{\theta}$ and its trace $\Delta b_{\theta} = - {\rm tr}\hspace{0.5mm}\nabla d b_{\theta}$ give the second fundamental form and the mean curvature $\tau$ of a horosphere associated with the Busemann function, respectively.   Let $\{ S(o;t); t > 0\}$ be a family of geodesic spheres and $\{ {\mathcal H}_{(-\theta,t)}\, \vert\, t > 0\}$ a family of horospheres associated with the Busemann function $b_{-\theta}$  passing $\gamma(t)$, where $-\theta\in \partial X$ denotes the ideal point represented by the reversed geodesic $\gamma^-$ of $\gamma$. Then $S(o; t)$ osculates ${\mathcal H}_{(-\theta,t)}$ at $\gamma(t)$. Under this situation we have the inequality (\cite{ISS})
 along $\gamma(t)$  
  \begin{equation*}
  \vert \sigma(t,u) - \tau(t)\vert < \frac{n-1}{t},\ \forall t > 0.
  \end{equation*}

  When each horosphere has constant mean curvature  and this constant is common (denote this value by  $-\hspace{0.5mm}Q$) over all horospheres (such an Hadamard manifold is called asymptotically harmonic, due to Ledrappier \cite{L}), volume entropy $\rho(X,g)$, defined by $\displaystyle{\rho(X,g) = }$\, $\displaystyle{\lim_{r\rightarrow\infty} \frac{1}{r} \log {\rm Vol} B(q;r)
  }$ is equal to $Q \geq 0$ from \cite{ISS}. 
  
\section{A harmonic Hadamard manifold and spherical functions}

\begin{definition} A Riemannian manifold $(X, g)$ is called harmonic, if the volume
density $\Theta(r,u) := r^{n-1} J(r,u)$ is a function of the distance function  $r$, independent of $u\in S_qX$ for any $q\in X$.
\end{definition}

\begin{theorem}[{cf. \cite[6.21 Proposition]{Besse}, \cite[Lemma 1.1]{Sz}}]\label{equiv_harmonicmfd}\rm
A Riemannian manifold is harmonic if and only if one of the following conditions holds
\begin{enumerate}
\item mean curvature $\sigma(r,u)$ of geodesic sphere $S(q;r)$ is a radial function.
\item there exists a radial function $f = f(r)$ for which
the function $h(p) = f(r(p))$ on $X$ satisfies the Laplace equation $\Delta h = 0$ and
\item the averaging operator ${\mathcal{MV}}_q$ commutes with $\Delta$.
Here, for a smooth function $f$, ${\mathcal{MV}}_q(f)$ is a radial function over $X$ whose value is the average of $f$ on $S(q;r)$;
\begin{equation}\label{avrgop}
{\mathcal{MV}}_q(f)(r) := \frac{1}{\int_{S(q;r)} dv_{S(q;r)}}\, \int_{p\in S(q;r)} f(p) \  d v_{S(q;r)}.
\end{equation}

\end{enumerate}
\end{theorem}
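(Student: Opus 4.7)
My strategy rests on the explicit formula for the Laplacian of a radial function in geodesic polar coordinates around $q$. Writing the metric in such coordinates and using $\sigma(r,u) = \partial_r \log \Theta(r,u)$, one has for any $f = f(r)$
\begin{equation*}
(\Delta f)(r,u) = -f''(r) - \sigma(r,u)\,f'(r).
\end{equation*}
I would derive this identity first from $\Delta = -(1/\sqrt{\det g})\,\partial_i(\sqrt{\det g}\,g^{ij}\partial_j)$; all three equivalences then flow from it.

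For (i) I would integrate the identity $\sigma(r,u) = \partial_r \log \Theta(r,u)$ subject to the universal expansion $\Theta(r,u) = r^{n-1}(1 + O(r^2))$ as $r\to 0$, which pins the constant of integration so that it is $u$-independent; consequently $\Theta$ is radial iff $\sigma$ is. For (ii), if $\sigma$ is radial, I would produce a radial harmonic function explicitly by solving the first-order equation $f'' + \sigma f' = 0$, which integrates to $f'(r) = C\,\Theta(r)^{-1}$. Conversely, any nontrivial radial solution $h(p) = f(r(p))$ of $\Delta h = 0$ has $f'$ nowhere vanishing (since $f'' + \sigma f' = 0$ is first-order linear in $f'$ for each fixed $u$, so a zero of $f'$ at one point forces $f' \equiv 0$), and then $\sigma(r,u) = -f''(r)/f'(r)$ is radial, reducing to (i).

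For (iii), the harder equivalence, the forward implication (harmonic $\Rightarrow$ $\mathcal{MV}_q$ commutes with $\Delta$) proceeds by computing both sides via the divergence theorem. Setting $u(r) = \mathcal{MV}_q(f)(r)$, the identity $\int_{B(q;r)} \Delta f\,dv = -\int_{S(q;r)} \partial_r f\,dv_{S(q;r)}$, combined with differentiation under the integral using $\partial_r \int_{S(q;r)} g\,dv_{S(q;r)} = \int_{S(q;r)} (\partial_r g + \sigma\,g)\,dv_{S(q;r)}$, collapses in the harmonic case (since then $\sigma$ factors out of the sphere integral) to
\begin{equation*}
\mathcal{MV}_q(\Delta f)(r) = -u''(r) - \sigma(r)\,u'(r),
\end{equation*}
which is exactly $\Delta u(r)$ by the displayed formula. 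The converse is essentially immediate from the same formula: if $\mathcal{MV}_q \Delta = \Delta \mathcal{MV}_q$ for every $q$ and every smooth $f$, then $\Delta \mathcal{MV}_q(f)$ must coincide with the radial function $\mathcal{MV}_q(\Delta f)$, forcing $\sigma(r,u)$ to be $u$-independent whenever $\partial_r \mathcal{MV}_q(f)$ is not zero, which gives (i).

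The step I expect to be most delicate is the forward implication of (iii): two successive differentiations through a family of geodesic spheres each introduce a $\sigma$-term which cancel cleanly against $\Theta'/\Theta$ only when $\sigma$ is radial. That exact cancellation is the mechanism that makes harmonicity the precise hypothesis under which the averaging operator commutes with the Laplacian.
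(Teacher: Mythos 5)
Your argument is correct and is essentially the classical proof: the paper itself gives no proof of this theorem, deferring to Besse (6.21 Proposition) and Szab\'o (Lemma 1.1), and your route --- the radial Laplacian formula $\Delta f = -f'' - \sigma(r,u)f'$, the identity $\sigma = \partial_r\log\Theta$ with the normalization $\Theta\sim r^{n-1}$, the explicit solution $f'=C\,\Theta^{-1}$, and the divergence-theorem computation $(\Theta u')' = -\Theta\,\mathcal{MV}_q(\Delta f)$ for the averaging operator --- is exactly the standard argument found in those references. The only points worth tightening are that the radial harmonic function in (ii) lives on a punctured neighborhood (the regular solution at $r=0$ is constant, since $\sigma\sim(n-1)/r$), and that in the converse of (iii) you should exhibit, for each $r_0>0$, a test function with $\partial_r\mathcal{MV}_q(f)(r_0)\neq 0$ (e.g.\ $f$ radial and strictly increasing) to conclude that $\sigma(r,u)$ is $u$-independent for every $r$.
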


Let $(X,g)$ be a Hadamard manifold. Assume that $(X,g)$ is harmonic and of positive volume entropy $\rho(X,g) = Q > 0$.

\begin{note}{
Notice that a harmonic Hadamard manifold having volume entropy $\rho(X,g) = 0$ is flat, due to \cite[Thm 4.2]{RS}.
}
\end{note}

We will define spherical functions on  a harmonic Hadamard manifold $(X,g)$ as follows.  Let $\Delta = - g^{ij} \nabla_i \nabla_j$ denote the Laplace-Beltrami operator of $(X,g)$.

\begin{definition}
{
A spherical function $\varphi$ is a radial eigenfunction of the 
{Laplace-Beltrami operator} $\Delta$, satisfying $\varphi(o) = 1$
at a reference point $o\in X$.
}
\end{definition}

Since $(X,g)$ is harmonic, $(X,g)$ must be asymptotically harmonic. Then, each Busemann function $b_{\theta}$, which is geodesically defined over $X$, normalized at the reference point $o$ and parametrized with respect to $\theta\in \partial X$, an ideal point at infinity, satisfies $\Delta\hspace{0.5mm}b_{\theta}(\cdot) \equiv -\hspace{0.5mm}Q$, since  the minus signed Hessian of $b_{\theta}$ gives the second fundamental form of a horosphere associated with $b_{\theta}$. Using $b_{\theta}$ we define for a fixed $\theta\in\partial X$ a function on $X$;
\begin{equation}
  P(x,\theta) := \exp\{ -\hspace{0.5mm}Q\hspace{0.5mm}b_{\theta}(x)\},\, x\in X, \, \theta\in\partial X.
\end{equation}We have then

\begin{lemma}$P(x,\theta)$ is a positive valued, harmonic function of
$x\in X$ for any $\theta\in\partial X$.
\end{lemma}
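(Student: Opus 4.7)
The plan is essentially a one-line computation using the chain rule for the Laplace–Beltrami operator, so I would keep it short and make the key inputs explicit.

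First, positivity is immediate: since $P(x,\theta) = \exp\{-Q\,b_\theta(x)\}$ is the exponential of a real-valued function, it is strictly positive on $X$.

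For harmonicity, I would fix $\theta \in \partial X$ and recall the two properties of the Busemann function $b_\theta$ collected in Section \ref{prelim}: (a) it has unit gradient, $|\nabla b_\theta| \equiv 1$, and (b) because $(X,g)$ is harmonic and hence asymptotically harmonic with volume entropy $Q > 0$, every horosphere has constant mean curvature $-Q$, giving $\Delta b_\theta \equiv -Q$ on $X$. Writing $P = F(b_\theta)$ with $F(u) = e^{-Qu}$ and applying the chain rule for $\Delta = -g^{ij}\nabla_i\nabla_j$, I get
\begin{equation*}
\Delta P = -F''(b_\theta)\,|\nabla b_\theta|^2 + F'(b_\theta)\,\Delta b_\theta.
\end{equation*}
Substituting $F'(u) = -Q\,e^{-Qu}$, $F''(u) = Q^2 e^{-Qu}$, together with (a) and (b), yields
\begin{equation*}
\Delta P = -Q^2\,P \;+\; (-Q\,P)(-Q) \;=\; -Q^2 P + Q^2 P \;=\; 0.
\end{equation*}

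There is no real obstacle here; the whole content is the identity $\Delta b_\theta \equiv -Q$, which was already established in the preliminary discussion of asymptotically harmonic manifolds. The only point requiring minor care is the sign convention for $\Delta$ (the paper uses the positive Laplacian $\Delta = -g^{ij}\nabla_i\nabla_j$), which is precisely the convention under which $-\nabla db_\theta$ represents the second fundamental form of the horosphere and its trace equals $\Delta b_\theta$.
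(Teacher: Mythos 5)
Your proof is correct and follows exactly the computation the paper uses: the paper leaves this lemma unproved but immediately proves the more general statement $\Delta P_{\lambda} = (\lambda^2 + Q^2/4)P_{\lambda}$ by the same chain-rule calculation with $|\nabla b_{\theta}|^2 = 1$ and $\Delta b_{\theta} = -Q$, of which your lemma is the special case $\lambda = iQ/2$. Your sign bookkeeping under the convention $\Delta = -g^{ij}\nabla_i\nabla_j$ is also consistent with the paper's.
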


Let $\lambda$ be a complex number. We
define $P_{\lambda}(x,\theta)$ a complex valued function on $X$ as
\begin{equation}
P_{\lambda}(x,\theta) := P(x,\theta)^{\left(\frac{1}{2} - i
\frac{\lambda}{Q}\right)} = \exp\left\{- \left(\frac{Q}{2} - i
\lambda\right) \hspace{0.5mm}b_{\theta}(x) \right\} .
\end{equation}Then, 

\begin{lemma} $\displaystyle \Delta P_{\lambda}(\cdot,\theta) = \left(\lambda^2 + \frac{Q^2}{4}\right)\, P_{\lambda}(\cdot,\theta)$ for any $\theta\in\partial X$.
\end{lemma}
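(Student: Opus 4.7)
The plan is a direct computation using the chain rule for the Laplacian applied to an exponential of the Busemann function, combined with the two key facts established in the preliminaries: the Busemann function has unit gradient, $|\nabla b_{\theta}|^2 \equiv 1$, and since $(X,g)$ is harmonic (hence asymptotically harmonic with common constant $-Q$) it satisfies $\Delta b_{\theta} \equiv -Q$ on all of $X$. Note that $P_{\lambda}(x,\theta) = \exp(\alpha\, b_{\theta}(x))$ with the complex scalar $\alpha = -(Q/2 - i\lambda)$, so the chain rule for smooth functions composed with $b_{\theta}$ extends without difficulty to complex coefficients.

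First I would record the general identity: for any $C^2$ function $u$ and any constant $\alpha\in\mathbb{C}$, one has
\begin{equation*}
\Delta(e^{\alpha u}) = \bigl(-\alpha^2\,|\nabla u|^2 + \alpha\,\Delta u\bigr)\, e^{\alpha u},
\end{equation*}
which follows from $\Delta = -\mathrm{div}\,\nabla$ together with $\nabla(e^{\alpha u}) = \alpha e^{\alpha u}\nabla u$ and the product rule for $\mathrm{div}$. (The minus sign reflects the sign convention $\Delta = -g^{ij}\nabla_i\nabla_j$ used in the paper.) Applying this with $u = b_{\theta}$ and $\alpha = -(Q/2 - i\lambda)$, and inserting $|\nabla b_{\theta}|^2 = 1$ and $\Delta b_{\theta} = -Q$, gives
\begin{equation*}
\Delta P_{\lambda}(\cdot,\theta) = \bigl(-\alpha^2 - \alpha Q\bigr)\,P_{\lambda}(\cdot,\theta).
\end{equation*}

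The remaining step is a straightforward algebraic simplification: $\alpha^2 = (Q/2 - i\lambda)^2 = Q^2/4 - \lambda^2 - iQ\lambda$ and $\alpha Q = -Q^2/2 + iQ\lambda$, so $-\alpha^2 - \alpha Q = \lambda^2 + Q^2/4$, yielding the claim. There is essentially no obstacle here; the proof is just a two-line computation once one has the two structural facts about $b_{\theta}$. The only thing worth checking is that the asymptotic harmonicity identity $\Delta b_{\theta} \equiv -Q$ was invoked with the correct sign, given the paper's convention that the mean curvature of horospheres is $-Q$ (so $\Delta b_{\theta}$, which equals $-\mathrm{tr}\,\nabla d b_{\theta}$, indeed equals $-Q$).
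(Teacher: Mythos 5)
Your proposal is correct and is essentially the same computation as the paper's proof: both apply the chain/product rule to $\exp\{-(Q/2-i\lambda)b_{\theta}\}$ and then insert $|\nabla b_{\theta}|^2=1$ and $\Delta b_{\theta}=-Q$ (the paper writes it in index notation via $\nabla_j\nabla_k P_{\lambda}$ traced with $g^{jk}$, while you phrase it through $\Delta=-\operatorname{div}\nabla$, but the content is identical). Your sign bookkeeping, including the check that $-\alpha^2-\alpha Q=\lambda^2+Q^2/4$, matches the paper's conclusion.
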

Namely, $P_{\lambda}(x,\theta)$ is an eigenfunction of $\Delta$ of eigenvalue $\lambda^2 + \frac{Q^2}{4}$.
\begin{proof}
In fact, $\nabla_kP_{\lambda} = -(Q/2-i\lambda) \nabla_k b_{\theta}\cdot\hspace{0.5mm} P_{\lambda}$
 and
 
 \begin{eqnarray*}
 \nabla_j\nabla_k P_{\lambda} &=& \left\{-\left(\frac{Q}{2}-i\lambda\right)\right\}^2 \nabla_j b_{\theta}\, \nabla_k b_{\theta}\cdot\hspace{0.5mm} P_{\lambda} 
 - \left(\frac{Q}{2}-i\lambda\right) \nabla_j\nabla_k b_{\theta}\cdot\,P_{\lambda}
 \end{eqnarray*}
 so
 \begin{eqnarray*}\Delta P_{\lambda} = - \sum g^{jk} \nabla_j\nabla_k P_{\lambda} 
= \left\{-\left(\frac{Q}{2}- i\lambda\right) \Delta b_{\theta} -\left(\frac{Q}{2}-i\lambda\right)^2\vert \nabla b_{\theta}\vert^2\right\} P_{\lambda}.
\end{eqnarray*}
Since $\vert\nabla b_{\theta}\vert^2= 1$ and $\Delta b_{\theta} = -Q$, one has $\Delta P_{\lambda} =  \left(\frac{Q^2}{4}+\lambda^2\right) P_{\lambda}$.
\end{proof}

We obtain, therefore, the spherical function
$\varphi_{\lambda} = \varphi_{\lambda}(r)$ with eigenvalue $\nu =
\lambda^2 + \frac{Q^2}{4}$, by taking spherical average over the geodesic sphere $S(o;r)$ of $P_{\lambda}(x,\theta)$ for $\lambda\in \mathbb{C}$, since $\Delta$ commutes with the operator ${\mathcal{MV}}_o$ over $X$ (see Theorem \ref{equiv_harmonicmfd} (iii);
\begin{eqnarray}
\varphi_{\lambda}(r) := {\mathcal{MV}}_o(P_{\lambda}(\cdot,\theta))(r), \hspace{2mm}r> 0.
\end{eqnarray}
So, $\varphi_{\lambda}(r)$ satisfies
 \begin{equation}\label{sphricalequa}
 \Delta \varphi_{\lambda} = - \left({\frac{d^2 \varphi_{\lambda}}{d r^2}} + \sigma(r) \frac{d\varphi_{\lambda}}{d r} \right) = \nu\hspace{0.5mm}\varphi_{\lambda},\hspace{2mm}\varphi_{\lambda}(0) = 1, \, \varphi_{\lambda}'(0) = {
 {0}}.
\end{equation}{Here, $\displaystyle   - \left(\frac{d^2 }{d^2 r} + \sigma(r) \frac{d }{d r} \right) = \Delta^{rad}$, the radial part of $\Delta$ in terms of polar coordinates(\cite[(1.3)]{Sz}) and $\sigma(r)$ denotes the mean curvature of a geodesic sphere $S(o;r)$. 

\begin{note}We extend each spherical function $\varphi_{\lambda}$ as  an even function on ${\Bbb R}$.\, 
From (\ref{sphricalequa}) $\varphi_{\lambda} = \varphi_{\mu}$ if and only if $\lambda = \pm\hspace{0.5mm}\mu$. So, $\varphi_{\lambda}(r) = \varphi_{-\lambda}(r)$ and $\overline{\varphi_{\lambda}(r)} = \varphi_{\overline{\lambda}}(r)$,\, $r \in {\Bbb R}$. $\varphi_{\lambda}(r)$ is real valued,  when $\lambda\in {\Bbb R}$.
Further, for $\displaystyle{\lambda={\pm i\hspace{0.5mm}\frac{Q}{2}}}$\, one has $\varphi_{\lambda}(r)\equiv 1$.
\end{note}
The boundedness of the spherical functions $\varphi_{\lambda}(r)$ is given as
\begin{lemma}\label{estimationofspherical}\rm If
$\vert\Im \lambda\vert \leq \frac{Q}{2}$, then $\vert \varphi_{\lambda}(r)\vert \leq 1$ for any $r \geq 0$.
\end{lemma}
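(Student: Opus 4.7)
The plan is to bound $\varphi_\lambda$ by the spherical mean of $|P_\lambda(\cdot,\theta)|$, observe that this modulus is a non-negative fractional power of the harmonic function $P(\cdot,\theta)$, and then combine Jensen's inequality with the mean value property for harmonic functions on a harmonic manifold.

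First, writing $s = \Im\lambda$ and using that $b_\theta$ is real-valued, one computes
\begin{equation*}
|P_\lambda(x,\theta)| = \exp\bigl\{-\Re(Q/2 - i\lambda)\,b_\theta(x)\bigr\} = \exp\bigl\{-(Q/2 + s)\,b_\theta(x)\bigr\} = P(x,\theta)^{\alpha},
\end{equation*}
where $\alpha := \frac{1}{2} + \frac{s}{Q}$. The hypothesis $|s| \leq Q/2$ is exactly the statement that $\alpha \in [0,1]$. Since $\varphi_\lambda(r) = \mathcal{MV}_o(P_\lambda(\cdot,\theta))(r)$ is the integral over $S(o;r)$ against the probability measure $d\mu_r$ coming from (\ref{avrgop}), the triangle inequality gives $|\varphi_\lambda(r)| \leq \mathcal{MV}_o(P(\cdot,\theta)^{\alpha})(r)$.

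Next, because $t \mapsto t^{\alpha}$ is concave on $[0,\infty)$ for $\alpha \in [0,1]$ and $d\mu_r$ is a probability measure, Jensen's inequality yields
\begin{equation*}
\mathcal{MV}_o\bigl(P(\cdot,\theta)^{\alpha}\bigr)(r) \leq \bigl(\mathcal{MV}_o(P(\cdot,\theta))(r)\bigr)^{\alpha}.
\end{equation*}
The final step is to identify the inner mean value. By the preceding lemma $P(\cdot,\theta)$ is harmonic, and by Theorem \ref{equiv_harmonicmfd}(iii) the operator $\mathcal{MV}_o$ commutes with $\Delta$. Hence $r \mapsto \mathcal{MV}_o(P(\cdot,\theta))(r)$ is a radial harmonic function on $X$ which is smooth at $o$; the only such function is the constant $P(o,\theta) = e^{-Q\,b_\theta(o)} = 1$ (equivalently, this is the statement $\varphi_{iQ/2} \equiv 1$ recorded in the Note above). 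Combining the three steps gives $|\varphi_\lambda(r)| \leq 1^{\alpha} = 1$.

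There is no real obstacle here; the whole argument is curvature-free and requires no asymptotic analysis. The one substantive observation is that the modulus of the otherwise non-harmonic complex function $P_\lambda$ is a concave power of the genuinely harmonic $P$, so that the bound $|\Im\lambda| \leq Q/2$ is used precisely to keep the Jensen exponent $\alpha$ in $[0,1]$, where the inequality points in the useful direction.
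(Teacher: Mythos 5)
Your argument is correct and is essentially the paper's own proof: the paper also bounds $\vert\varphi_\lambda\vert$ by the spherical mean of $P(\cdot,\theta)^{\alpha}$ and applies the normalized H\"older inequality with exponent $k=Q/(Q/2+\eta)$, which is exactly your Jensen inequality for the concave power $t\mapsto t^{\alpha}$, before identifying the inner mean with $\varphi_{iQ/2}\equiv 1$. The only (cosmetic) improvement is that your concavity formulation handles the endpoint cases $\vert\Im\lambda\vert=Q/2$ uniformly, whereas the paper's conjugate exponents degenerate there and it disposes of that case separately.
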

\begin{proof}\rm
We give a proof by following the argument of \cite[p.81]{R}.  Assume $\lambda = \xi+i \eta$\hspace{2mm} with $\vert \eta\vert < \frac{Q}{2}$. Then $\frac{Q}{2} \pm \eta > 0$. Further $-\left(\frac{Q}{2}-i\lambda\right)= -\left(\frac{Q}{2}+\eta\right)+i\xi$ so that $\left\vert\exp\left\{i\xi b_{\theta}(x)\right\}\right\vert = 1$ and hence
\begin{eqnarray}\label{sphericalinequality}
\vert \varphi_{\lambda}(r)\vert &\leq& \frac{1}{{\rm Vol} (S(o;r))}\, \int_{x\in S(o;r)}  \exp\left\{-\left(\frac{Q}{2}+ \eta\right) b_{\theta}(x)\right\} dv_{S(o;r)}(x)\\ \nonumber
&=& \frac{1}{{\rm Vol}(S(o;r))}\, \int_{x\in S(o;r)}  \exp\{-Q\,b_{\theta}(x)\}^{\frac{\frac{Q}{2}+ \eta}{Q}} dv_{S(o;r)}(x).
\end{eqnarray}Apply the H\"older inequality with respect to the conjugate indices $k = \frac{Q}{Q/2+ \eta}$, $\ell = \frac{Q}{Q/2- \eta}$; 
\begin{eqnarray*}\frac{1}{{\rm Vol}(I)}\,\int_I f(x)^{1/k} dx \leq \left\{\frac{1}{{\rm Vol}(I)}\, \int_I f(x) dx\right\}^{1/k}
\end{eqnarray*} to the last integral term of (\ref{sphericalinequality}), we obtain
\begin{eqnarray}
\vert\varphi_{\lambda}(r)\vert \leq \{{\mathcal {MV}}_o(\exp(-Q b_{\theta}(\cdot)))(r)\}^{1/k} = \{\varphi_{i Q/2}(r)\}^{\frac{\frac{Q}{2}+ \eta}{Q}}\equiv 1
\end{eqnarray}so that $\vert \varphi_{\lambda}(r)\vert \leq 1$ for any $r \geq 0$, when $\vert \Im \lambda\vert < \frac{Q}{2}$. 
It is easily shown that $\vert \varphi_{\lambda}\vert (r) \leq 1$ for any $r \geq 0$, when $\vert \Im \lambda\vert = \frac{Q}{2}$. 
\end{proof}

\section{Spherical Fourier transform}
Let $(X,g)$ be a harmonic Hadamard manifold of volume entropy $Q > 0$.
  By using the spherical functions $\{\varphi_{\lambda}(r)\,\vert\, \lambda\in {\Bbb C}\}$ over $(X,g)$ we define the spherical Fourier transform.
  \begin{definition}\rm
 Let $f = f(r)$ be a radial smooth function on $X$ with compact support.
  \begin{eqnarray}
   {\mathcal H}(f)(\lambda) &:=& \omega_{n-1}\, \int_0^{\infty} f(r) \varphi_{\lambda}(r) \Theta(r) dr \\ \nonumber
   &=&  \int_X f(r(x)) P_{\lambda}(x,\theta) dv_g,\hspace{2mm}r(x) = d(o,x)
   \end{eqnarray}is called the spherical Fourier transform of $f$. The function ${\mathcal H}(f)(\lambda)$ thus defined, denoted by ${\hat f}(\lambda)$ for brevity, is an entire function of $\lambda\in{\Bbb C}$.
   \end{definition}
   Then, like the classical Fourier transform, the map ${\mathcal H}$ is linear and satisfies ${\mathcal H}(f\ast f_1)(\lambda) = {\mathcal H}(f)(\lambda)\ {\mathcal H}(f_1)(\lambda)$ for the convolution and ${\mathcal H}(\Delta f)(\lambda) = (\frac{Q^2}{4}+ \lambda^2) {\hat f}(\lambda)$ for any $f, f_1$, radial smooth functions of compact support.  The convolution of radial functions $f$, $f_1$ is defined by
   \begin{eqnarray*}
   (f\ast f_1)(d(x,y)) = \int_{z\in X} f(d(x,z)) f_1(d(z,y)) dv_g(z).
   \end{eqnarray*}  Refer to \cite[2,(2.8)]{Sz}. It is seen that the function $f\ast f_1$ is radial and of compact support (\cite[Prop. 2.1]{Sz}).

     We set the
 range and the domain of ${\mathcal H}$, respectively as the space 
 ${\mathcal C}^{\infty}_c(X)^{rad}$ of smooth radial functions $f = f(r)$ with compact support on $X$ and 
  the space ${\mathcal{PW}}{\Bbb C}_{even}$ of even entire functions $h = h(\lambda)$ of $\lambda\in {\Bbb C}$ of certain exponential type \cite{ADY, R, DiBlas}.
    We define precisely ${\mathcal{PW}}{\Bbb C}_{even} = \cup_{R>0} {\mathcal{PW}}{\Bbb C}^R_{even}$ where ${\mathcal{PW}}{\Bbb C}^R_{even}$ is the space of even, entire functions $h=h(\lambda)$ over ${\Bbb C}$ satisfying the following; for any $N\in {\Bbb N}$ there exists a constant $C_N > 0$ such that 
  \begin{eqnarray}
   \vert h(\lambda)\vert \leq C_N (1+\vert\lambda\vert)^{-N} \exp(R\vert \Im \lambda\vert),\hspace{2mm}\forall \lambda\in {\Bbb C}.
  \end{eqnarray}
  
   \begin{proposition}
   For any $f \in {\mathcal C}^{\infty}_c(X)^{rad}$ of ${\rm supp}(f) \subset B(o,R)$,\,  ${\mathcal H}(f)$ belongs to  
   $\mathcal{PW}{\Bbb C}^{R'}_{even}$ for $R' > R$.
   \end{proposition}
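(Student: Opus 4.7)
The plan is to verify the three conditions defining $\mathcal{PW}\mathbb{C}^{R'}_{even}$: that $\mathcal{H}(f)(\lambda)$ is entire in $\lambda$, even in $\lambda$, and that for every $N\in\mathbb{N}$ there is an estimate of the form $C_N(1+|\lambda|)^{-N}\exp(R'|\Im\lambda|)$. Entirety and evenness are immediate from the integral representation: $\varphi_\lambda(r)=\mathcal{MV}_o(P_\lambda(\cdot,\theta))(r)$ depends holomorphically on $\lambda$ because $P_\lambda$ does, and satisfies $\varphi_\lambda=\varphi_{-\lambda}$; since $f$ has compact support, differentiation under the integral sign transfers both properties to $\mathcal{H}(f)$.

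The core step is a pointwise bound on $|\varphi_\lambda(r)|$ valid for all $\lambda\in\mathbb{C}$, extending Lemma \ref{estimationofspherical} beyond the strip $|\Im\lambda|\leq Q/2$. From $|P_\lambda(x,\theta)|=\exp(-(Q/2+\Im\lambda)\,b_\theta(x))$ and the Busemann-function inequality $|b_\theta(x)|\leq d(o,x)=r$ on $S(o;r)$, averaging yields
\begin{equation*}
|\varphi_\lambda(r)|\leq \exp\bigl((Q/2+|\Im\lambda|)\,r\bigr),\qquad r\geq 0,\ \lambda\in\mathbb{C}.
\end{equation*}
Combining this with the boundedness of $\Theta$ on $[0,R]$ gives the basic estimate
\begin{equation*}
|\mathcal{H}(f)(\lambda)|\leq \omega_{n-1}\|f\|_\infty \int_0^R e^{(Q/2+|\Im\lambda|)r}\Theta(r)\,dr\leq C\,e^{R|\Im\lambda|},
\end{equation*}
where $C$ depends only on $R$, $Q$, $f$, and $\max_{[0,R]}\Theta$.

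To promote this to arbitrary polynomial decay, I would invoke the intertwining identity $\mathcal{H}(\Delta^N f)(\lambda)=(Q^2/4+\lambda^2)^N\mathcal{H}(f)(\lambda)$. Since $\Delta$ preserves the space of smooth radial functions with support in $B(o,R)$, the basic estimate applies with $f$ replaced by $\Delta^N f$, giving $|Q^2/4+\lambda^2|^N\,|\mathcal{H}(f)(\lambda)|\leq C_N\,e^{R|\Im\lambda|}$. For $|\lambda|$ large one has $|Q^2/4+\lambda^2|\geq \tfrac12|\lambda|^2$, while for $|\lambda|$ in any bounded range the basic estimate already gives $|\mathcal{H}(f)(\lambda)|\leq C'(1+|\lambda|)^{-2N}e^{R|\Im\lambda|}$ after absorbing a constant. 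Combining the two regimes,
\begin{equation*}
|\mathcal{H}(f)(\lambda)|\leq \widetilde C_N (1+|\lambda|)^{-2N} e^{R|\Im\lambda|}\leq \widetilde C_N (1+|\lambda|)^{-2N} e^{R'|\Im\lambda|}
\end{equation*}
for every $N\in\mathbb{N}$, whence $\mathcal{H}(f)\in\mathcal{PW}\mathbb{C}^{R'}_{even}$.

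The only non-routine ingredient is the pointwise bound on $|\varphi_\lambda(r)|$ outside the strip $|\Im\lambda|\leq Q/2$ treated in Lemma \ref{estimationofspherical}, but the Busemann-function inequality $|b_\theta|\leq r$ on $S(o;r)$ handles this directly. The remainder is the standard Paley--Wiener strategy: a direct exponential-type bound combined with the Laplacian intertwining to extract polynomial decay in the real direction.
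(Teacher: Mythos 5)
Your proof is correct, but it follows a genuinely different route from the paper. The paper's proof exploits the horospherical fibration: it defines $g(t)=\int_{{\mathcal H}_{(\theta,t)}}e^{-Qt/2}f(d(o,x))\,d\sigma_t$, observes that ${\mathcal H}f$ is exactly the classical Fourier transform of $g$, notes that $|b_\theta(x)|\le d(o,x)$ forces ${\rm supp}\,g\subset[-R,R]$, and then cites the classical Paley--Wiener theorem. You instead run the standard ``symmetric-space'' Paley--Wiener strategy directly on the spherical side: a pointwise bound $|\varphi_\lambda(r)|\le e^{(Q/2+|\Im\lambda|)r}$ (obtained from the same Busemann inequality $|b_\theta|\le r$, and correctly extending Lemma 3.8 outside the strip), which yields the exponential-type estimate, followed by the intertwining identity ${\mathcal H}(\Delta^N f)(\lambda)=(Q^2/4+\lambda^2)^N{\mathcal H}f(\lambda)$ applied to $\Delta^N f$ (still supported in $B(o,R)$) to extract arbitrary polynomial decay; the zeros of $Q^2/4+\lambda^2$ at $\lambda=\pm iQ/2$ are harmless since they lie in a bounded region where the crude bound suffices, as you note. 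Both arguments are sound and both in fact give the sharper conclusion ${\mathcal H}f\in{\mathcal{PW}}{\Bbb C}^{R}_{even}$. The paper's reduction is shorter and makes visible the Abel-transform factorization ${\mathcal H}=\widehat{{\mathcal A}(\cdot)}^{cl}$ that underlies the injectivity used later, but it quietly relies on the smoothness and integrability of the horospherical integral $g$; your argument is more self-contained, avoids the fibration entirely, and is the one that generalizes when no convenient Radon/Abel transform is available, at the modest cost of invoking the unproved (though standard, and asserted in the paper) identity ${\mathcal H}(\Delta f)=(Q^2/4+\lambda^2){\mathcal H}f$, which itself follows from Green's formula and the compact support of $f$.
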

   
For this, refer to \cite[p.41]{DiBlas} and \cite{Ri}.

\begin{proof}
   Let $f= f(r)$ be a radial function of ${\rm supp}(f) \subset B(o,R)$. We define a function $g$ of $t\in{\Bbb R}$  by integrating $f$ along a horosphere ${\mathcal H}_{(\theta,t)}$ with respect to a fixed $\theta\in\partial X$ as
   \begin{eqnarray}\label{definitionofg}
   g(t) := \int_{x\in{\mathcal H}_{(\theta,t)}} \exp\left\{-\frac{Q}{2}t\right\} f(d(o,x)) d\sigma_t,\, \forall t.
   \end{eqnarray}Here ${\mathcal H}_{(\theta,t)}$ is the level hypersurface of $b_{\theta}$ of level $t$, that is, ${\mathcal H}_{(\theta,t)}=\{ y\in X\, \vert\, b_{\theta}(y) = t\}$\, and $d\sigma_t$ denotes the volume form of ${\mathcal H}_{(\theta,t)}$. Then by using (\ref{horosphericalintegration}) we have the equality
    \begin{eqnarray}
   {\hat f}(\lambda) = \int_{-\infty}^{+\infty} g(t) \ \exp\{i\lambda t\} dt,\, \lambda\in {\Bbb C}.
   \end{eqnarray}
   In fact, we represent ${\hat f}(\lambda)$ as an integral over $X$;
   \begin{eqnarray*}
   {\hat f}(\lambda) &=& \int_X f(r(x)) P_{\lambda}(x, \theta) dv_g \\
   &=& \int_X f(r(x)) \exp\left\{-\left(\frac{Q}{2}-i\lambda\right) b_{\theta}(x)\right\} dv_g.
   \end{eqnarray*}By applying the horospherical fibre structure of $X$, given at section \ref{prelim}  the above integral is expressed as, since $b_{\theta} = t$ over ${\mathcal H}_{(\theta,t)}$
    \begin{eqnarray}\label{classicalF}
{\hat f}(\lambda)
  &=&  \int_{-\infty}^{+\infty} dt \int_{x\in{\mathcal{H}_{(\theta,t)}}} f(d(o,x)) \exp\left\{-\left(\frac{Q}{2}-i\lambda\right) t\right\} d\sigma_t \\
  &=& \int_{-\infty}^{+\infty} dt \exp\left(i\lambda t\right) g(t).\nonumber
    \end{eqnarray}
    
    \vspace{2mm}For $x\in \mathcal{H}_{(\theta,t)}$ we have $\vert t\vert = \vert b_{\theta}(x)\vert \leq d(o,x)$, therefore $f(x)=0$ if ${\rm supp} f \subset B(o,R)$ and $\vert t \vert \geq R$.
      Thus $g(t) = 0$ for $\vert t\vert \geq R$ and the classical Paley-Wiener theorem shows that ${\hat f}$, which is by (\ref{classicalF}) the classical Fourier transform of $g$ with ${\rm supp}\, g \subset [-R,R]$, belongs to  the space ${\mathcal{PW}}{\Bbb C}^{R}_{even}$.
  \end{proof}
 
   \begin{proposition}The spherical Fourier transform 
${\mathcal H}$ maps ${\mathcal{C}}^{\infty}_c(X)^{rad}$ into  ${\mathcal{PW}}{\Bbb C}_{even}$.
\end{proposition}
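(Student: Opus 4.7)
The plan is to observe that this proposition is essentially an immediate corollary of the previous one, combined with the defining decomposition $\mathcal{PW}\mathbb{C}_{even} = \bigcup_{R>0} \mathcal{PW}\mathbb{C}^R_{even}$. First I would take an arbitrary $f \in \mathcal{C}^{\infty}_c(X)^{rad}$ and, using compactness of $\mathrm{supp}(f)$, choose $R > 0$ with $\mathrm{supp}(f) \subset B(o, R)$. Then the previous proposition applies directly and yields $\mathcal{H}(f) \in \mathcal{PW}\mathbb{C}^{R'}_{even}$ for any $R' > R$, and hence $\mathcal{H}(f)$ lies in the union $\mathcal{PW}\mathbb{C}_{even}$.

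The only remaining point worth spelling out is why $\mathcal{H}(f)$ is an entire \emph{even} function of $\lambda$. Entireness is part of the definition of the spherical Fourier transform given earlier in this section. Evenness follows from $\varphi_\lambda(r) = \varphi_{-\lambda}(r)$, recorded in the Note after equation (\ref{sphricalequa}), since then
\begin{equation*}
\mathcal{H}(f)(-\lambda) = \omega_{n-1} \int_0^\infty f(r)\, \varphi_{-\lambda}(r)\, \Theta(r)\, dr = \omega_{n-1} \int_0^\infty f(r)\, \varphi_{\lambda}(r)\, \Theta(r)\, dr = \mathcal{H}(f)(\lambda).
\end{equation*}
Alternatively, this evenness is already built into the target space $\mathcal{PW}\mathbb{C}^{R'}_{even}$ in the previous proposition, so no additional argument is needed.

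There is no real obstacle here; the substantive content (the Paley--Wiener type estimate, i.e.\ the exponential bound $|h(\lambda)| \leq C_N (1+|\lambda|)^{-N} \exp(R'|\Im \lambda|)$) has already been established in the previous proposition via the horospherical fibration reduction to the classical euclidean Paley--Wiener theorem applied to the function $g(t)$ in (\ref{definitionofg}). The present statement is only the qualitative packaging of that quantitative result: membership in the filtered union $\mathcal{PW}\mathbb{C}_{even}$ without specifying the exponential type.
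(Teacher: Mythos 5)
Your proposal is correct and matches the paper's own treatment: the paper states this proposition without a separate proof precisely because it is the immediate consequence of the preceding proposition (choose $R$ with ${\rm supp}(f)\subset B(o,R)$ and use ${\mathcal{PW}}{\Bbb C}_{even}=\cup_{R>0}{\mathcal{PW}}{\Bbb C}^{R}_{even}$), with evenness and entireness already built into the earlier statement via $\varphi_{\lambda}=\varphi_{-\lambda}$ and the classical Paley--Wiener theorem applied to $g(t)$. Nothing is missing.
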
Notice that ${\mathcal H}$ is injective. Refer to \cite[Theorem 3.12]{PS}.

\begin{remark}\rm
Let $C^{\infty}_0$ be the space of smooth functions on ${\Bbb R}$ with compact support. For $f\in C^{\infty}_0$  its classical Fourier transform is given by 
$$\displaystyle{{\widehat f}^{cl}(\lambda) =  \int_{-\infty}^{\infty} f(t)\, e^{-i\lambda t} dt}$$
with the converse Fourier transform $h = h(\lambda) \mapsto {\tilde h}= {\tilde h}(t)$, given by
$$\displaystyle{{\tilde h}(t) = \frac{1}{2\pi} \int_{-\infty}^{\infty} h(\lambda)\, e^{i\lambda t} d\lambda}.$$
Note there is another fashion for defining the transform by taking $\displaystyle{1/\sqrt{2\pi}}$ as the normalization.
  
  The image of the classical Fourier transform is, by applying the argument of Phragmen-Lindel\"of principle, the space of entire functions on ${\Bbb C}$ of certain exponential type. 
For an even function $f\in C^{\infty}_0$  the Fourier transform is written by  the Fourier cosine transform as ${\hat f}^{cl}(\lambda) = \int_0^{\infty} f(t) \cos \lambda t\, dt$, so ${\hat f}^{cl}(\lambda)$ is an even function of $\lambda$. 
\end{remark}

\section{A harmonic Hadamard manifold of hypergeometric type}\label{harmonichypergeometric}

\vspace{2mm}Now set 
\begin{equation}\label{variabletransf}
z = - \sinh^2 \frac{r}{2}
\end{equation} in (\ref{sphricalequa}). Then, under this variable transformation
\begin{lemma}\label{convert} {
The equation (\ref{sphricalequa}) is converted into the following with respect to $z$
\begin{equation}\label{convertedequat}
  z (1-z) \frac{d^2 f}{d z^2} + \left\{
   \left(\frac{1}{2} \sinh r\right)\  \sigma(r) 
   + \frac{1}{2} \cosh r \right\} \frac{d\hspace{0.2mm}f}{d z} 
   - \nu f {=
 {0.}}
\end{equation}
}
\end{lemma}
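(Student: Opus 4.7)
The lemma is a direct change-of-variable calculation applied to the radial eigenvalue equation \eqref{sphricalequa}, so my plan is to carry out the chain rule carefully and then identify two key hyperbolic identities that let the coefficient of $\tfrac{d^{2}f}{dz^{2}}$ be rewritten in the form $z(1-z)$.

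First I would compute the derivatives of the substitution $z = -\sinh^{2}(r/2)$ with respect to $r$. Using the double-angle formula $2\sinh(r/2)\cosh(r/2) = \sinh r$, I get
\[
\frac{dz}{dr} = -\tfrac{1}{2}\sinh r, \qquad \frac{d^{2}z}{dr^{2}} = -\tfrac{1}{2}\cosh r.
\]
Writing $\varphi_{\lambda}(r) = f(z(r))$, the chain rule gives
\[
\frac{d\varphi_{\lambda}}{dr} = -\tfrac{1}{2}\sinh r\,\frac{df}{dz}, \qquad \frac{d^{2}\varphi_{\lambda}}{dr^{2}} = \tfrac{1}{4}\sinh^{2} r\,\frac{d^{2}f}{dz^{2}} - \tfrac{1}{2}\cosh r\,\frac{df}{dz}.
\]

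Next I would substitute these expressions into \eqref{sphricalequa}, which in expanded form reads
\[
\frac{d^{2}\varphi_{\lambda}}{dr^{2}} + \sigma(r)\frac{d\varphi_{\lambda}}{dr} + \nu\varphi_{\lambda} = 0.
\]
Plugging in and collecting terms gives
\[
\tfrac{1}{4}\sinh^{2} r\,\frac{d^{2}f}{dz^{2}} - \Bigl\{\tfrac{1}{2}\cosh r + \tfrac{1}{2}\sinh r\,\sigma(r)\Bigr\}\frac{df}{dz} + \nu f = 0.
\]

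The last step is to recognize that $1 - z = 1 + \sinh^{2}(r/2) = \cosh^{2}(r/2)$ and $-z = \sinh^{2}(r/2)$, so that
\[
\tfrac{1}{4}\sinh^{2} r = \sinh^{2}(r/2)\cosh^{2}(r/2) = -z(1-z).
\]
Multiplying the equation through by $-1$ yields exactly \eqref{convertedequat}. There is essentially no obstacle here; the only point that requires care is the sign arising from $(dz/dr)^{2} = -z(1-z)$, which is what flips the overall sign and produces the $-\nu f$ term in the stated form.
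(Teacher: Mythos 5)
Your computation is correct: the chain rule gives $(dz/dr)^2=\tfrac14\sinh^2 r=-z(1-z)$ and $d^2z/dr^2=-\tfrac12\cosh r$, and substituting into \eqref{sphricalequa} and multiplying by $-1$ yields exactly \eqref{convertedequat}. This is precisely the ``slightly straightforward computation'' the paper alludes to while omitting the proof, so there is nothing to add.
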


 \vspace{2mm} Lemma \ref{convert} is obtained by a slightly straightforward computation and we omit a proof for Lemma \ref{convert}.

\begin{definition}[refer to Definition \ref{def}]\label{defhypergeometric}
A harmonic Hadamard manifold $(X,g)$ is said to be of hypergeometric type, if the converted differential equation (\ref{convertedequat}) is exactly a Gauss hypergeometric differential equation;   
 \begin{equation}\label{hypergeometricequation}
  z (1-z) \frac{d^2 f}{d z^2} + \left(c -(a+b+1)z\right)
   \frac{d\hspace{0.2mm}f}{d z}
   - a b\,  f = 0,
\end{equation}where $a, b, c \in \mathbb{C}$ are constants, and moreover $c \not= 0, -1, -2, \cdots$.
\end{definition}  

\begin{remark}
The variable transformation (\ref{variabletransf}) is the unique transformation under which the equation of eigenfunction is converted into the hypergeometric differential equation.
\end{remark}  

Then,  we have Theorem \ref{maintheorem} for a harmonic Hadamard manifold of hypergeometric type, as we will prove.
  
\begin{remark}\label{volumeentropy}
The harmonicity is homothetic invariant. However, Theorem \ref{maintheorem} indicates that the hypergeometric type harmonicity is not homothetic invariant, because from (\ref{k}) volume entropy of a harmonic Hadamard manifold $(X,g)$ of hypergeometric type satisfies necessarily $ Q > (n-1)/3$.  Notice that $Q$ must satisfy $\displaystyle{ \frac{n-1}{2} \leq Q \leq n-1}$ from Bishop comparison theorem with respect to the volume of geodesic spheres(\cite{ItohSatohpre}).
\end{remark}

Corollary \ref{gromov} of section \ref{intro} is a direct consequence of Theorem \ref{maintheorem}.

 \begin{proof}[Proof of Theorem \ref{maintheorem}]
We will show (ii) and then (i).

(ii):\hspace{2mm} The coefficient $h(r)$ of $\displaystyle \frac{d\hspace{0.2mm}f}{d z}$ is written from (\ref{convertedequat}) as
\begin{equation*}
h(r) = \frac{1}{2}\ \sinh r (\sigma(r) + \coth r).
\end{equation*}It has also another representation given as 
\begin{equation*}
\begin{split}
h(r) =& c - (a + b + 1) z = c - (a + b + 1) \left(- \sinh^2 \frac{r}{2}\right) \\
=& \left(c - \frac{1}{2}(a + b + 1)\right) + \frac{1}{2}(a+b+1) \cosh r,
\end{split}
\end{equation*}
by assuming that $(X,g)$ is of hypergeometric type. Therefore, we obtain the equality  \begin{equation}\label{sigmar}
  \sigma(r) = \frac{2}{\sinh r} \left(c - \frac{1}{2}(a+b+1)\right) + (a+b) \frac{\cosh r}{\sinh r}.
  \end{equation}
 
\begin{lemma}\label{meancurvature}
$\sigma(r)$ has the following asymptotical formulae;
\begin{align}
\sigma(r)=& \frac{n-1}{r} + o(1), \hspace{4mm}r \rightarrow +0,
\label{tendszero}\\
\sigma(r)=& Q + O\left(1/r\right), \hspace{4mm}r \rightarrow +{
{\infty,}}
\label{tendsinfty}
\end{align}
(refer for (\ref{tendszero}) to \cite[Lemma 12.2]{GrVan} and for (\ref{tendsinfty}) to \cite[Lemma 4.2]{ISS}).
\end{lemma}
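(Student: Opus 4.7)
The plan is to verify the two asymptotic formulae independently, both by exploiting the identity $\sigma(r,u) = \partial_r \log \Theta(r,u)$ recalled in section \ref{prelim}.

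For (\ref{tendszero}) I would expand the Jacobi tensor $A(r)$ of (\ref{endomorphism}) in a Taylor series about $r = 0$. The Jacobi equation, together with the initial conditions $Y_i(0) = 0$ and $Y_i'(0) = e_i(0)$, yields
$$Y_i(r) = r\, e_i(0) - \frac{r^3}{6}\, R(e_i(0), u)\, u + O(r^5),$$
so that $\langle Y_i(r), Y_j(r)\rangle = r^2 \delta_{ij} - \frac{r^4}{3}\langle R(e_i,u)u, e_j\rangle + O(r^6)$ and hence
$$\Theta(r,u) = r^{n-1}\left(1 - \frac{r^2}{6}\, {\rm Ric}(u,u) + O(r^4)\right).$$
Logarithmic differentiation then gives $\sigma(r,u) = (n-1)/r - (r/3)\,{\rm Ric}(u,u) + O(r^3)$, which is sharper than (\ref{tendszero}); this is essentially the Gray--Vanhecke expansion of the volume density.

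For (\ref{tendsinfty}) I would invoke the osculating-horosphere comparison already exhibited in section \ref{prelim}, namely
$$\bigl|\sigma(r,u) - \tau(r)\bigr| < \frac{n-1}{r}, \qquad r > 0,$$
where $\tau(r)$ is the mean curvature at $\gamma(r)$ of the horosphere $\mathcal{H}_{(-\theta,r)}$ that osculates $S(o;r)$ at $\gamma(r)$, with $-\theta \in \partial X$ the endpoint of the reverse ray $\gamma^-$. Since $(X,g)$ is harmonic it is asymptotically harmonic, so every horosphere has the same constant mean curvature, identified in section \ref{prelim} with the volume entropy $Q$. Therefore $|\sigma(r,u) - Q| < (n-1)/r$, which is precisely $\sigma(r) = Q + O(1/r)$.

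Neither step is really an obstacle, as both are direct consequences of material already collected in section \ref{prelim}; the routine piece is the local Taylor expansion of the Jacobi tensor. The only care required is to keep the sign conventions for $\sigma$ and $\tau$ consistent, but this is automatic from the osculation picture, where both quantities are traces of the second fundamental form with respect to the outward unit normal along $\gamma$.
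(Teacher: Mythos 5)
Your proof is correct and is essentially the argument the paper intends: for (\ref{tendszero}) you reproduce the Jacobi-tensor Taylor expansion that the paper itself carries out in the remark following the proof of Theorem \ref{maintheorem} (the source of the citation to \cite{GrVan}), and for (\ref{tendsinfty}) you combine the osculating-horosphere inequality $\vert\sigma(t,u)-\tau(t)\vert<(n-1)/t$ quoted in section \ref{prelim} with asymptotic harmonicity, which is precisely the content of \cite[Lemma 4.2]{ISS} that the paper invokes (and whose underlying mechanism is the Rauch comparison mentioned in the paper's remark). The only point worth watching is the one you already flag: the paper's prose assigns the horospheres the constant mean curvature $-Q$ via the convention $\tau=\Delta b_{\theta}=-\operatorname{tr}\nabla db_{\theta}$, so one must take the normal for $\mathcal{H}_{(-\theta,t)}$ pointing away from $o$ (i.e.\ $\nabla b_{-\theta}=\gamma'(t)$) to make $\tau=+Q$ match the sign of $\sigma$, exactly as the osculation picture dictates.
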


\begin{remark}\rm
 (\ref{tendsinfty}) of Lemma \ref{meancurvature} is derived by the aid of Rauch comparison theorem (cf. \cite[Ch.10]{Do}), by comparing $(X,g)$ with  the euclidean space with respect to the sectional curvature. 
 \end{remark}

Letting $r \rightarrow +\infty$ in (\ref{sigmar}) $a + b = Q$, while $a b = \nu = \frac{Q^2}{4} + \lambda^2$. So, $a, b = \frac{Q}{2} \pm i \lambda$.
       From (\ref{tendszero}) letting $r \rightarrow 0$ leads $r \sigma(r) \rightarrow n-1$ and, from (\ref{sigmar})
\begin{equation*}
\begin{split}
r \sigma(r)&= \frac{2r}{\sinh r} \left(c - \frac{1}{2}(a+b+1)\right) + (a+b)  r \frac{\cosh r}{\sinh r} \\
&\rightarrow 2\left(c - \frac{1}{2}(a+b+1)\right) + (a+b) = 2c - 1
\end{split}
\end{equation*}
and consequently $c = n/2$.

      \vspace{2mm}Thus we obtain

\begin{equation*}
\frac{1}{2}\ \sigma(r) \sinh r + \frac{1}{2} \cosh r = \frac{n}{2} + (Q+1) \sinh^2 \frac{r}{2}.
\end{equation*}
So                 
\begin{equation*}
\begin{split}
  \sigma(r) \sinh r =& n + 2(Q+1) \sinh^2 \frac{r}{2} - \cosh r \\
  =& n \left(\cosh^2 \frac{r}{2} - \sinh^2 \frac{r}{2}\right) + {{2(Q+1)}} \sinh^2 \frac{r}{2} -\left( \cosh^2 \frac{r}{2} + \sinh^2 \frac{r}{2}\right)
\end{split}
\end{equation*}
and hence
\begin{equation*}
\sigma(r)\times 2 \sinh \frac{r}{2} \cosh \frac{r}{2} = (n-1) \cosh^2 \frac{r}{2} + (-n + 2 Q + 1) \sinh^2 \frac{r}{2},
\end{equation*}
so we have                                                                                                                                                                        \begin{equation*}
\begin{split}
 \sigma(r)=& \frac{(n-1)\cosh^2 \frac{r}{2}}{2 \sinh \frac{r}{2} \cosh \frac{r}{2}} + \frac{(2Q - n + 1) \sinh^2 \frac{r}{2}}{2 \sinh\frac{r}{2} \cosh \frac{r}{2}} \\
=& \frac{n-1}{2} \coth \frac{r}{2} + \left(Q - \frac{n-1}{2} \right)\tanh\frac{r}{2}.
\end{split}
\end{equation*}
 
 From the equality $\frac{\Theta'(r)}{\Theta(r)} = \sigma(r)$ we obtain easily
\begin{equation*}
\Theta(r) = 
k_g\ \sinh^{(n-1)}\frac{r}{2}\  \cosh^{(2Q-(n-1))} \frac{r}{2}
\end{equation*}
for a constant $k_g > 0$. The constant $k_g$ is given exactly as $-\ \frac{2^n}{3 Q -(n-1)}\hspace{0.5mm}{\rm Ric}_g$ from Ledger's formula
\begin{equation}\label{ledgerformula}
\left.\left(\frac{\Theta(r)}{r^{n-1}} \right)''\right|_{r=0} = - \frac{1}{3} {\rm Ric}_g,
\end{equation}
which is valid for a harmonic manifold (refer to \cite[6.38]{Besse} for the Ledger's formula).

Compute the left hand side of (\ref{ledgerformula}) as
\begin{equation*}
\frac{\Theta(r)}{r^{n-1}} = \frac{k_g}{2^{n-1}} \left(1+\frac{n-1}{3!}\left(\frac{r}{2}\right)^2+O(r^4)\right) \left(1+\frac{\ell}{2!}\left(\frac{r}{2}\right)^2+O(r^4)\right)
\end{equation*}($\ell := 2Q-(n-1)$) and then
$$\displaystyle{\frac{\Theta(r)}{r^{n-1}} = \frac{k_g}{2^{n-1}} \left(1+ \left(Q -\frac{n-1}{3}\right) \frac{r^2}{4}+O(r^4) \right)
}$$
so that
$$
\left.\left(\frac{\Theta}{r^{n-1}}\right)''\right|_{r=0} = \frac{k_g}{2^n}\left(Q - \frac{n-1}{3}\right).
$$
We therefore obtain (\ref{k}).

                                                                                                                                                                                         \vspace{2mm}
 (i):\, The spherical function $f = \varphi_{\lambda}$ satisfies the hypergeometric differential equation. So, the function $f$ can be described as  (\ref{sphericalgausshyp}).
 \end{proof}

 \begin{remark}\rm  Although the formula (\ref{ledgerformula}) is shown from Ledger's formula, we will show it directly. It suffices to show
  \begin{eqnarray} \label{volumedensityformula} 
  \Theta(t) = t^{n-1}\left(1 - \frac{1}{3!} {\rm Ric}_g t^2 + O(t^3)\right).
  \end{eqnarray}For this, we write $\Theta(t)$ as $\Theta(t) = \det A(t)$ where $A(t)$ is the Jacobi tensor field along $\gamma$ defined at (\ref{endomorphism}). $A(t)$ satisfies $A(0)= O$, $A'(0)= {\rm id}_{u^{\perp}}$ ($u= \gamma'(0)$). 
   Then $A(t)$ is expanded with respect to $t$ as \begin{eqnarray*}(\tau^t_0)^{-1}\, A(t) \tau^t_0 = A(0) + A'(0) t + \frac{1}{2} A''(0) t^2 + \frac{1}{3!} A'''(0) t^3 +\cdots,
\end{eqnarray*} where $\tau_0^t$ is the parallel translation along $\gamma$ from $0$ to $t$. Since $A(t)$ satisfies the equation $A''(t) + R_{\gamma'(t)} A(t)= O$ with respect to the Jacobi operator $R_{\gamma'(t)}\, :\, \gamma^{\perp}(t) \rightarrow \gamma^{\perp}(t)$ which is associated to the Riemannian curvature tensor,\, the several coefficients of the expansion of $(\tau^t_0)^{-1}\, A(t) \tau^t_0$ other than $A(0)=O$, $A'(0)= {\rm id}_{u^{\perp}}$ are given by  $A''(0)= O$ and $A'''(0)= - R_{\gamma'(0)} A'(0) = -R_u$ and then we have
\begin{eqnarray*}\Theta(t) = t^{n-1} \, \det \left({\rm id}_{u^{\perp}} - \frac{1}{3!} R_u t^2 + O(t^3)\right)
 \end{eqnarray*} from which (\ref{volumedensityformula}) is derived, since $\det \left({\rm id}_{u^{\perp}} - \frac{1}{3!} R_u t^2 + O(t^3)\right) = 1 - \frac{1}{3!} {\rm tr}\, R_u t^2 + o(t^2)$ and ${\rm tr}\, R_u$ is the Ricci curvature ${\rm Ric}(u,u)$ of unit tangent vector $u$. Notice that from this argument  $(X,g)$ turns out to be  Einstein, since $\Theta(t)$ is independent of a choice of  $u$.
 \end{remark} 
 \begin{proof}[Proof of Theorem \ref{character}]
 This theorem is derived from (\ref{convertedequat}). In fact from Lemma \ref{meancurvature} one has $c_1 = (n-1)/2$ from an asymptotical property that \ $\lim_{r\rightarrow 0} r \sigma(r) = n-1$ and $c_1 + c_2 = Q$ from the fact $\lim_{r\rightarrow\infty} \sigma(r) = Q$. So, $c_2 = Q - (n-1)/2$. One substitutes this form of $\sigma(r)$ into (\ref{convertedequat}) from which one derives a Gauss hypergeometric equation.
 \end{proof}
 \begin{note} From the above argument one writes the radial part of the Laplace-Beltrami operator as
 \begin{equation}
 \Delta^{rad} = - \left[\frac{d^2}{d r^2} + \left\{\frac{ (n-1)}{2} \coth \frac{r}{2} + \left(Q - \frac{(n-1)}{2}\right) \tanh \frac{r}{2}\right\} \frac{d}{d r} \right].
 \end{equation}
Substituting  $t = r/2$,\,  $-4\hspace{0.5mm}\Delta^{rad}$ becomes the Jacobi operator 
 \begin{equation*}
   \frac{d^2}{d t^2} + \left\{(n-1) \coth t + (2Q - (n-1)) \tanh t\right\} \frac{d}{d t}. 
 \end{equation*}
So, spherical functions on $(X,g)$ are written by Jacobi functions, as shown in section \ref{jacobi} (see also \cite{ADY, Ko}).
\end{note}
                                                                                                                                                                                         \begin{proposition}\label{DR}  Let $(X,g)$ be a Damek-Ricci space. Then $(X,g)$ is a harmonic Hadamard manifold of hypergeometric type.
\end{proposition}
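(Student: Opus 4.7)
The strategy is to reduce the claim to Theorem \ref{character} and then to verify the volume-density criterion by invoking well-known explicit formulae for Damek--Ricci spaces.

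First, I recall the structure of a Damek--Ricci space $S = N \ltimes A$, where $N$ is an $H$-type group with $\dim N = m + k$, the center $\mathfrak{z}$ of the Lie algebra of $N$ has dimension $k$, and $A = \mathbb{R}^+$. It is standard (see \cite{BTV, ADY}) that $S$, equipped with its canonical left-invariant metric, is a simply connected, complete Riemannian manifold of non-positive sectional curvature (hence a Hadamard manifold) of dimension $n = m+k+1$, and is harmonic with positive volume entropy $Q = m/2 + k$. This gives the ``harmonic Hadamard manifold of positive volume entropy'' hypothesis of Theorem \ref{character}.

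Next, I recall the explicit form of the volume density of geodesic spheres of a Damek--Ricci space. It is classical (cf.\ \cite[Sec.~2]{ADY}, and also the references collected in \cite{BTV}) that
\begin{equation*}
\Theta(r) = 2^{m+k}\, \sinh^{m+k}\!\frac{r}{2}\,\cosh^{k}\!\frac{r}{2}.
\end{equation*}
Setting $c_1 = (m+k)/2$ and $c_2 = k/2$, one checks immediately that $c_1 > 0$, $c_1 + c_2 = m/2 + k = Q > 0$, and that $2c_1 = m+k = n-1$ matches the constraint forced by the asymptotics $r\sigma(r)\to n-1$ as $r\to 0$ in Lemma \ref{meancurvature}. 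Hence $\Theta(r)$ has precisely the form
\begin{equation*}
\Theta(r) = k_0\, \sinh^{2c_1}\!\frac{r}{2}\,\cosh^{2c_2}\!\frac{r}{2}
\end{equation*}
with $k_0 = 2^{m+k} > 0$ and $c_1, c_2$ as above.

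By Theorem \ref{character}, this volume-density form is equivalent to $(X,g)$ being of hypergeometric type; therefore the Damek--Ricci space $S$ is a harmonic Hadamard manifold of hypergeometric type, as required. The only non-routine ingredient is locating the explicit expression for $\Theta(r)$; once that formula is in hand, the argument is a one-line application of Theorem \ref{character}. A slightly more self-contained alternative would be to compute the mean curvature $\sigma(r) = \Theta'(r)/\Theta(r)$ directly, obtaining $\sigma(r) = c_1 \coth(r/2) + c_2 \tanh(r/2)$, and then invoke the mean-curvature half of Theorem \ref{character}; this avoids the normalization constant but requires the same underlying computation in the Lie-algebraic model of $S$.
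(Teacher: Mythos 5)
Your proof is correct. The paper itself offers no argument for Proposition \ref{DR} beyond the citation ``refer to \cite{ADY, R}'', where the result is obtained by directly identifying the radial part of the Laplacian on a Damek--Ricci space with a Jacobi operator and writing the spherical functions as hypergeometric functions; the explicit density $\Theta(r) = 2^{m+k}\sinh^{m+k}\frac{r}{2}\cosh^{k}\frac{r}{2}$ that you invoke is exactly the formula the paper records in Remark \ref{damekriccikg} (citing \cite[(1.16)]{ADY}). What you do differently is route the verification through the paper's own Theorem \ref{character}: you check $c_1=(m+k)/2>0$ and $c_1+c_2=m/2+k=Q>0$ and conclude. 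This makes the proposition an internal corollary of the paper's characterization rather than an external import, and your arithmetic is right ($2c_1=n-1$, $c_1+c_2=Q$, consistent with the constraints forced by Lemma \ref{meancurvature}); there is also no circularity, since Theorem \ref{character} is proved independently of Proposition \ref{DR}. The only ingredients you take on faith are the standard facts that a Damek--Ricci space is a harmonic Hadamard manifold with $Q=m/2+k$ and has the stated density, which is the same external input the paper relies on, so nothing is lost. Either route is acceptable; yours has the small advantage of exhibiting the hypergeometric parameters explicitly via the density, while the cited references give the spherical functions directly.
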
For this proposition refer to {{\cite{ADY, R}}}.
\begin{remark}\label{damekriccikg} 
A Damek-Ricci space $S$ is a simply connected, solvable Lie
group with a left invariant Riemannian metric,  written as a semi-direct
product $S = A \ltimes N$ of $A \cong \mathbb{R}$ with a generalized
Heisenberg group $N$. The Lie algebra 
  $\mathfrak{n}$ of $N$ with an
inner product $\langle\cdot,\cdot\rangle$ is decomposed into
$\mathfrak{n} = \mathfrak{v} \oplus \mathfrak{z}$ with respect to
the center $\mathfrak{z}$ and its orthogonal complement
$\mathfrak{v} = \mathfrak{z}^{\perp}$. So, the Lie algebra $\mathfrak{s}$ of $S$ is $\mathfrak{s} = \mathfrak{v} \oplus
\mathfrak{z} \oplus \mathbb{R}$ and $\dim S = n$ is given as
$n = m + k + 1$, $m = \dim \mathfrak{v}$, $k = \dim \mathfrak{z}$. Via Cayley type transform, the volume density has the form of radial
function at the origin
\begin{equation}\displaystyle{\Theta(r) = 2^{m+k}\left(\sinh \frac{r}{2}\right)^{m+k} \left(\cosh \frac{r}{2}\right)^k }
\end{equation} (refer to \cite[(1.16)]{ADY}). 
$S$ is an Einstein
manifold. Ricci curvature of $S$ is $- (\frac{m}{4} + k)$ via the
formula (\ref{ledgerformula}) and volume entropy $Q = \frac{m}{2} + k$ so the constant $k_g = 2^{n-1} = 2^{m+k}$.
\end{remark}
                                                                                                                                                                                                                                                                                                                                                                                    \section{Hypergeometric functions}\label{hypergeometricfunctions}

We provide in this section several basic properties of Gauss hypergeometric functions which are adequate for the sequel.
 
 Let $F(a,b,c;z)$ be the Gauss hypergeometric function with parameters $a,b,c\in {\Bbb C}$($c\not= 0,-1,-2,\dots$) defined by the hypergeometric series
 \begin{eqnarray}
 \sum_{m=0}^{\infty} \frac{(a)_m(b)_m}{(c)_m}\cdot \frac{z^m}{m!},
  \end{eqnarray}
where
$$
\displaystyle{(a)_m := a(a+1)\dots(a+m-1) = \frac{\Gamma(a+m)}{\Gamma(a)}},\quad
(a)_0 := 1.
$$                                                                                                                                                                                                                                                                                                                                                                             
   This series is absolutely convergent for $\vert z\vert < 1$ and divergent when $\vert z\vert > 1$.
  Then, $F(a,b,c;z)= F(b,a,c;z)$ is analytic when $\vert z \vert < 1$. If $\Re(c-a-b) > 0$, the series is absolutely convergent when $\vert z\vert=1$ (cf. \cite[2.38]{WW}) and one has
  \begin{eqnarray}\label{convergent}
  F(a,b,c;1) = \frac{\Gamma(c)\Gamma(c-a-b)}{\Gamma(c-a)\Gamma(c-b)},
  \end{eqnarray}known as Gauss's Theorem(\cite[14.2]{WW}, \cite[15.1.20]{AbSt}). Here $\Gamma(z)$ is the Gamma function which is                                                                                                                                                                                                                                                                                                                                                                                                                                                                                                                                                                                                                                                                                                                                                                  analytic, except at the points $z= 0, -1,-2,\dots$, where $\Gamma(z)$ has poles(cf. \cite[12.10]{WW} for its definition and properties).  By analytic continuation $F(a,b,c;z)$ is considered as an analytic function for $z\in {\Bbb C}\setminus [1,\infty)$. The differentiation of $F(a,b,c;z)$ is given
   \begin{eqnarray}\label{differenhyper}
  \frac{d F}{d z}(a,b,c;z) = \frac{a b}{c}\, F(a+1,b+1,c+1;z),
  \end{eqnarray} refer to \cite[15.2.1]{AbSt} for this.

    $F(a,b,c;z)$ is a solution of the Gauss hypergeometric differential equation (\ref{hypergeometricequation}), regular at the singular point $z=0$. 
  
  There are many transformation formulae between hypergeometric functions among which we employ the following; 
  \begin{eqnarray}\label{connectionformula}
  F(a,b,c;z) &=& B_1 (-z)^{-a}\, F\left(a,1-c+a,1-b+a; \frac{1}{z}\right)\\
  & +& B_2 (-z)^{-b}\, F\left(b,1-c+b,1-a+b; \frac{1}{z}\right),\nonumber
  \end{eqnarray}
   \begin{equation}\label{connectionformula2} 
   B_1 = \frac{\Gamma(c)\Gamma(b-a)}{\Gamma(b)\Gamma(c-a)},\quad
   B_2 = \frac{\Gamma(c)\Gamma(a-b)}{\Gamma(a)\Gamma(c-b)}
  \end{equation}  
 (refer for this to \cite[15.3.7]{AbSt},\cite[2.10.(2),(5)]{Erdelyi}, \cite[15.51]{WW}) and  

\begin{equation*}
F(a,b,c;z) = (1-z)^{-a} \, F\left(a,c-b,c; \frac{z}{z-1}\right)
\end{equation*}
(refer to \cite[15.3.4]{AbSt},\cite[2.9.(3)]{Erdelyi}),  known as Kummer transformation formulae. The first one is significantly important for study of Jacobi transform and the spherical transform. The hypergeometric functions in the right hand of (\ref{connectionformula}) are solutions of the Gauss  hypergeometric differential equation, regular at $z=\infty$. The second formula appeared in \cite{R}.

\section{Jacobi Functions and Jacobi Transform}\label{jacobi}
                                                                                                                                                                                                                                                                                                                                                                                  \begin{definition}\rm                                                                                                                                                                                                                                                                                                                                                                                  The function $\phi^{(\alpha,\beta)}_{\mu} = \phi^{(\alpha,\beta)}_{\mu}(t) $ \, $(\alpha,\beta\in{\Bbb C}$, $-\alpha \not\in {\Bbb N}$) is called Jacobi function of order $(\alpha,\beta$), if it is an even smooth function on ${\Bbb R}$ which equals $1$ at $t=0$ and which satisfies the differential equation                                                                                                                                                                                                                                                                                                                     \begin{eqnarray}\label{jacobiequation}                                                                                                                                                                                                                                                                                                                                                                                 \left[                                                                                                                                                                                                                                                                                                                                                                                
\frac{d^2}{dt^2} + \left\{(2\alpha+1)\coth t + (2\beta+1)\tanh t\right\}\frac{d}{dt} + \left(\mu^2 + T^2\right                                                                                                                                                                                                                                                                                                                                                        ) \right] \phi_{\mu}(t) = 0.
\end{eqnarray}Here $T := \alpha+\beta+1$.\end{definition}

Let $\Omega_{(\alpha,\beta)}(t) := 2^{2T}\,\left(\sinh t\right)^{2\alpha+1}\, \left(\cosh t\right)^{2\beta+1}$ be the weight function associated with the Jacobi function $\phi^{(\alpha,\beta)}_{\mu}$ of order $(\alpha,\beta)$. 
For a smooth function $f$ on ${\Bbb R}$ with compact support the Jacobi transform ${\mathcal J}_{(\alpha,\beta)}$ of $f$ is defined by
                                                                                                                                                                                                                                                                                                                                                       
\begin{definition}[cf. \cite{ADY, Ko, Ko-x, Flensted-2}]\label{jacobitransform}                                                                                                                                                                                                                                                                                          \begin{eqnarray}                                                                                                                                                                                                                                                                                                                                            
\left({\mathcal J}_{(\alpha,\beta)}f \right)(\mu) := \int_0^{\infty} f(t) \phi_{\mu}^{(\alpha,\beta)}(t) \Omega_{(\alpha,\beta)}(t) dt,\hspace{2mm}\mu\in{\Bbb C}.
\end{eqnarray}                                                                                                                                                                                                                                                                                                                                                    
\end{definition}
                                                                                                                                                                                                                                                                                                                                                       The differential equation (\ref{jacobiequation}) is a second order equation  for which the point $t=0$ is a regular singularity. 
                                                                                                                                                                                                                                                                                                                                                        The Jacobi function $\phi^{(\alpha,\beta)}_{\mu}$  is the unique solution of (\ref{jacobiequation}), regular at $t=0$, and is expressed in terms of a hypergeometric function
\begin{eqnarray}                                                                                                                                                                                                                                                                                                                                                                                                                                                                                                                                                                                                                                                                                                                                                                     \phi^{(\alpha,\beta)}_{\mu}(t) = F\left(\frac{T-i\mu}{2}, \frac{T+i\mu}{2},\alpha+1; - \sinh^2 t   \right).
\end{eqnarray}
                                                                                                                                                                                                                                                                                                                                                                                    It is noted that $(\Gamma(\alpha+1))^{-1} \phi_{\mu}^{(\alpha,\beta)}(t)$ is an entire function of $\alpha$, $\beta$ and $\mu$ (also for $\alpha = - 1,-2,\cdots$). See \cite{Ko-x} for this.
                                                                                                                                                                                                                                                                                                                                                                                  \begin{lemma}\label{coincide} Changing the variable as $t = r/2$, the spherical function $\varphi_{\lambda}= \varphi_{\lambda}(r)$ becomes the Jacobi function $\phi^{(\alpha,\beta)}_{\mu}(t)$ of order $\displaystyle (\alpha, \beta) = \left(\frac{n}{2}-1,Q- \frac{n}{2}\right)$ and $\mu= 2\lambda$;                                                                                                                                                                                                                                                                                                                                                                                    \begin{eqnarray}                                                                                                                                                                                                                                                                                                                                                                                   \label{sphericaljacobi} \varphi_{\mu/2}(r) = \phi_{\mu}^{(\alpha,\beta)}(r/2)                                                                                                                                                                                                                                                                                                                                                                                   . \end{eqnarray}                                                                                                                                                                                                                                                                                                                                                                                                                                                                                                                                                                                                                                                                                                                                                                     Furthermore, $\Theta(r) = C_g\hspace{0.5mm}\Omega_{\alpha,\beta}(r/2)$ and $Q = T (=\alpha + \beta + 1)$.                                                                                                                                                                                                                                                                                                                                                                                   
Here                                                                                                                                                                                                                                                                                                                                                                                                                                                                                                                                                                                                                                                                                                                                                                                                                                                                                                                                                                                                                                                                                                                                             
\begin{eqnarray}\label{c} C_g = 2^{-2Q} k_g  = - \frac{2^{n-2Q}}{3Q-(n-1)} {\rm Ric}_g.
\end{eqnarray}
                                                                                                                                                                                                                                                                                                                                                                                   \end{lemma}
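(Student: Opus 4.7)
The plan is to prove the lemma by direct parameter matching between the two explicit formulas. From Theorem \ref{maintheorem}(i) every spherical function on a harmonic Hadamard manifold of hypergeometric type has the closed form $\varphi_{\lambda}(r) = F(Q/2 - i\lambda,\, Q/2 + i\lambda,\, n/2;\, -\sinh^2(r/2))$, while by the definition of the Jacobi function recalled just before the lemma, $\phi^{(\alpha,\beta)}_{\mu}(t) = F((T - i\mu)/2,\, (T + i\mu)/2,\, \alpha+1;\, -\sinh^2 t)$ with $T = \alpha+\beta+1$. I would therefore set $(\alpha,\beta) := (n/2 - 1,\, Q - n/2)$ and compute $T = (n/2-1) + (Q - n/2) + 1 = Q$ and $\alpha + 1 = n/2$. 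Substituting $t = r/2$ and $\mu = 2\lambda$, the arguments of $F$ become exactly $(Q/2 - i\lambda,\, Q/2 + i\lambda,\, n/2;\, -\sinh^2(r/2))$, so $\phi^{(\alpha,\beta)}_{2\lambda}(r/2) = \varphi_{\lambda}(r)$, which gives (\ref{sphericaljacobi}) after renaming $\mu = 2\lambda$. This also shows $Q = T$.

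For the second assertion I would plug the same parameters into the definition of the weight function $\Omega_{(\alpha,\beta)}(t) = 2^{2T}(\sinh t)^{2\alpha+1}(\cosh t)^{2\beta+1}$. With $2\alpha + 1 = n-1$, $2\beta + 1 = 2Q - (n-1)$ and $T = Q$, one gets
\begin{equation*}
\Omega_{(\alpha,\beta)}(r/2) = 2^{2Q}\,\sinh^{n-1}(r/2)\,\cosh^{2Q - (n-1)}(r/2),
\end{equation*}
which matches the density formula (\ref{volumedensity}) up to the constant factor $k_g / 2^{2Q}$. Thus $\Theta(r) = C_g \Omega_{(\alpha,\beta)}(r/2)$ with $C_g = 2^{-2Q} k_g$, and inserting the explicit value of $k_g$ from (\ref{k}) gives $C_g = -\,2^{n - 2Q}(3Q - (n-1))^{-1}\,\mathrm{Ric}_g$, i.e.\ formula (\ref{c}).

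There is no real obstacle here: the lemma is essentially a dictionary statement, and both parts reduce to arithmetic on the parameters $(\alpha,\beta,T,\mu)$ versus $(n,Q,\lambda)$. The only thing worth double-checking is the consistency of the convention $T = \alpha + \beta + 1$ with the form of the Jacobi equation (\ref{jacobiequation}), since the mean-curvature expression derived in the proof of Theorem \ref{maintheorem} reads $\sigma(r) = \frac{n-1}{2}\coth(r/2) + (Q - (n-1)/2)\tanh(r/2)$, and one must verify that under $t = r/2$ the radial Laplace equation $\Delta^{\mathrm{rad}} \varphi_{\lambda} = (Q^2/4 + \lambda^2)\varphi_{\lambda}$ transforms into (\ref{jacobiequation}) with exactly $2\alpha + 1 = n - 1$, $2\beta + 1 = 2Q - (n-1)$ and eigenvalue parameter $\mu^2 + T^2 = 4\lambda^2 + Q^2 = 4(\lambda^2 + Q^2/4)$; the factor $4$ is precisely the Jacobian of the substitution $t = r/2$, confirming the identifications.
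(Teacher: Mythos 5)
Your proposal is correct and follows essentially the same route the paper intends: the paper gives no separate proof of Lemma \ref{coincide}, but the Note preceding Proposition \ref{DR} records exactly your consistency check (that $-4\Delta^{rad}$ becomes the Jacobi operator under $t=r/2$), and the identification of parameters then follows by comparing the hypergeometric representations (\ref{sphericalgausshyp}) and that of $\phi_{\mu}^{(\alpha,\beta)}$, precisely as you do. Your arithmetic ($T=Q$, $\alpha+1=n/2$, $2\alpha+1=n-1$, $2\beta+1=2Q-(n-1)$, $\mu^2+T^2=4(\lambda^2+Q^2/4)$, and $C_g=2^{-2Q}k_g$) all checks out.
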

                                                                                                                                                                                                                                                                                                                                                                                  
Therefore, the spherical transform is expressed as
                                                                                                                                                                                                                                                                                                                                                                                   \begin{eqnarray}
\left({\mathcal H}f\right)(\lambda) = 2\hspace{0.5mm}C\left({\mathcal J}_{\alpha,\beta)}(f\circ i_2)\right)(2\lambda),
\end{eqnarray}where $i_2 : {\Bbb R}\rightarrow{\Bbb R}; t \mapsto 2 t$ is the multiplication(cf. \cite[2.14]{ADY}).

Due to \cite[2.9(13)]{Erdelyi}, 
                                                                                                                                                                                                                                                                                                                                                                                    for $\mu \not\in - i {\Bbb N}$ another solution of (\ref{jacobiequation}) is given by
\begin{multline}
\label{anothersol}
\Phi_{\mu}^{(\alpha,\beta)}(t) = (2\sinh t)^{i\mu-T}\\
\times F \left(\frac{1}{2}(-\alpha+\beta+1-i\mu),\frac{1}{2}(T-i\mu), 1-i\mu; -(\sinh t)^{-2} \right),                                                                                                                                                                                                                                                                                                                                                                                                                                                                                                                                                                                                                                                                                                                                                                 \end{multline}(cf. \cite{Ko-x, Flensted}).  The function $\Phi_{\mu}^{(\alpha,\beta)}$ satisfies $\Phi_{\mu}^{(\alpha,\beta)}(t) = e^{(i\mu-T)t}(1+o(1))$, $t\rightarrow \infty$.

For $\mu \not\in {\Bbb Z}$ by using \cite[2.10(2), 2.10(5)]{Erdelyi}, $\phi_{\mu}^{(\alpha,\beta)}(t)$ is a linear combination of $\Phi_{\mu}^{(\alpha,\beta)}$, $\Phi_{-\mu}^{(\alpha,\beta)}$ which are linearly independent, as
\begin{eqnarray}\label{firstsecond}
\pi^{1/2}(\Gamma(\alpha+1))^{-1} \phi_{\mu}^{(\alpha,\beta)}(t) = \frac{1}{2} c_{\alpha,\beta}(\mu)\hspace{0.5mm}\Phi_{\mu}^{(\alpha,\beta)}(t)+ \frac{1}{2} c_{\alpha,\beta}(-\mu)\hspace{0.5mm} \Phi_{-\mu}^{(\alpha,\beta)}(t), 
\end{eqnarray}
where
\begin{eqnarray}\label{harish}
c_{\alpha,\beta}(\mu) =  \frac{2^{T}\,\Gamma(\frac{1}{2} i\mu)\Gamma(\frac{1}{2}(1+i\mu))}{\Gamma(\frac{1}{2}(T + i\mu))\Gamma(\frac{1}{2}(\alpha-\beta+1+i\mu))}.\end{eqnarray}
Here, since $\phi_{\mu}(t) \equiv \phi_{-\mu}(t)$, the coefficient of $\Phi_{-\mu}$ is $c_{\alpha,\beta}(-\mu)$. The formula (\ref{firstsecond}) comes from (\ref{connectionformula}), (\ref{connectionformula2}). Note for the Jacobi function $\Phi_{-\mu}(t)$ refer to \cite[2.9(9)]{Erdelyi}.                                                                                                                                                                                                                                                                                                                                                                                  
\begin{remark}
Let $(\alpha,\beta) = (-1/2,-1/2)$. Then
\begin{equation*}
\phi_{\mu}^{(-1/2,-1/2)}(t) = \cos \mu t, \qquad \Phi_{\mu}^{(-1/2,-1/2)}(t) = e^{i\mu t},
\end{equation*}
\begin{equation*}
\Omega_{-1/2,-1/2}(t) = 1,\qquad c_{-1/2,-1/2}(\mu)= 1.
\end{equation*}
 \end{remark}

  The Jacobi transform ${\mathcal J}_{\alpha,\beta}$ for $\alpha=\beta= -1/2$ is the classical Fourier cosine transform; for $f \in C_0^{\infty}$
  \begin{eqnarray}({\mathcal J}_{-1/2,-1/2}(f))(\mu) = (2/\pi)^{1/2} \int_0^{\infty} f(t) \cos \mu t\, dt
  \end{eqnarray}and the inversion formula for ${\mathcal J}_{-1/2,-1/2}$ is
    \begin{eqnarray}f(t) &=&  (2/\pi)^{1/2} \int_0^{\infty} ({\mathcal J}_{-1/2,-1/2}(f))(\mu) \cos \mu t\, d\mu \\ \nonumber
    &=&  (2\pi)^{-1/2} \int_{-\infty}^{\infty} ({\mathcal J}_{-1/2,-1/2}(f))(\mu) \cos \mu t\, d\mu.
  \end{eqnarray} 
 
                                                                                                                                                                                                                                                                                                                                                                                     \section{Green's formula}

 In this section and the subsequent sections, we verify the inversion formula not directly. We show the indirect version of the inversion formula, given in Proposition 10.6, or more precisely at the equality (\ref{formulax-3}) for any $h\in{\mathcal{PW}}{\Bbb C}_{even}$, by employing the method of G\"otze(\cite{G}) with respect to Green's formula for the Laplace-Beltrami operator $\Delta$ and Riemann-Lebesgue's lemma. 
 As Lemma \ref{hormandertrick} indicates, the map defined by  the right hand of the formula (1.9) has its range in ${\mathcal{C}}^{\infty}_c(X)^{rad}$  so that we obtain the inversion formula from (\ref{formulax-3}) by applying the injectivity of the spherical Fourier transform, shown in \cite{PS}.

\vspace{2mm}
The following is known as Green's formula.
\begin{proposition}\label{green}
Let $(M,g)$ be a compact, oriented, Riemannian manifold with boundary $\partial M$. Let $f_1$ and $f_2$ be smooth functions on $M$. Then
\begin{equation}
\int_M\left(f_1 \Delta {\overline f}_2 - {\overline f}_2 \Delta f_1  \right) dv_M = - \int_{\partial M} \left( f_1 \frac{\partial {\overline f}_2}{\partial \nu} - {\overline f}_2 \frac{\partial f_1}{\partial \nu} \right) dv_{\partial M}.
\end{equation}
$\frac{\partial f}{\partial \nu}$ denotes the normal derivative of $f$, defined by $\frac{\partial f}{\partial \nu}(x) := \langle \nabla f, \nu\rangle$, $x \in \partial M$, where $\nu$ is the outer unit normal field to $\partial M$.
\end{proposition}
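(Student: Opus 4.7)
The plan is to derive this as Green's second identity from the divergence theorem, treating complex-valued functions by real/imaginary linearity. First I would consider the (complex) vector field $V := f_1\,\nabla\bar f_2 - \bar f_2\,\nabla f_1$ on $M$. A direct computation using the Leibniz rule and metric compatibility gives
\[
\operatorname{div}(V) = \langle\nabla f_1,\nabla\bar f_2\rangle + f_1\,\operatorname{div}(\nabla\bar f_2) - \langle\nabla\bar f_2,\nabla f_1\rangle - \bar f_2\,\operatorname{div}(\nabla f_1),
\]
and the two inner-product terms cancel by symmetry of $\langle\cdot,\cdot\rangle$.

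Next, I would invoke the paper's sign convention $\Delta = -g^{ij}\nabla_i\nabla_j$, which gives $\operatorname{div}(\nabla f) = g^{ij}\nabla_i\nabla_j f = -\Delta f$, so that $\operatorname{div}(V) = \bar f_2\,\Delta f_1 - f_1\,\Delta\bar f_2$. Integrating over $M$ and applying the divergence theorem (to the real and imaginary parts separately and recombining by linearity) yields $\int_M \operatorname{div}(V)\,dv_M = \int_{\partial M}\langle V,\nu\rangle\,dv_{\partial M}$. At each boundary point, $\langle V,\nu\rangle = f_1\,\frac{\partial \bar f_2}{\partial \nu} - \bar f_2\,\frac{\partial f_1}{\partial \nu}$ by the definition of the normal derivative. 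Combining these, one obtains
\[
\int_M\bigl(\bar f_2\,\Delta f_1 - f_1\,\Delta\bar f_2\bigr)\,dv_M = \int_{\partial M}\left(f_1\,\frac{\partial \bar f_2}{\partial \nu} - \bar f_2\,\frac{\partial f_1}{\partial \nu}\right)dv_{\partial M},
\]
and multiplying through by $-1$ produces the stated identity.

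There is essentially no serious obstacle here; the proof is a standard application of Stokes' theorem. The only point requiring genuine care is tracking the minus sign in the Laplacian convention, which is what is responsible for the explicit minus sign on the right-hand side of the displayed formula (with the more common convention $\Delta = +g^{ij}\nabla_i\nabla_j$ that minus sign would be absent).
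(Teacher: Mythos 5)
Your proof is correct: the divergence-theorem computation, the cancellation of the symmetric gradient terms, and the careful tracking of the sign coming from the convention $\Delta = -g^{ij}\nabla_i\nabla_j$ all check out, and that sign is indeed what produces the minus on the boundary integral. The paper itself offers no proof, citing the identity as well known, and your argument is exactly the standard one a reader would supply.
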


Let $(X,g)$ be a harmonic Hadamard manifold. Let $o$ be a fixed point of $X$. Let $\varphi = \varphi(r)$ and $\psi =\psi(r)$ be complex valued, radial functions which are eigenfunctions of $\Delta$;
\begin{equation}
 \Delta \varphi = \left(\frac{Q^2}{4} + \lambda^2\right) \varphi,\hspace{2mm}\Delta \psi = \left(\frac{Q^2}{4} + \mu^2\right) \psi,
\end{equation}
where $Q > 0$ is the volume entropy of $(X,g)$ and $\lambda$, $\mu \in {\Bbb C}$.

 Apply  Proposition \ref{green} to $\varphi = \varphi(r)$ and $\psi =\psi(r)$ over a geodesic ball $M = B(o;r) \subset X$. We have then, since $\nu = \nabla r$ over $S(o;r) = \partial B(o;r)$
 \begin{lemma}\label{greenformula1}
 \begin{eqnarray*}
(\lambda^2 - {\overline{\mu}}^2) \int_{B(o;r)} \varphi(r) {\overline \psi}(r) dv_{B(o;r)}
&=& \omega_{n-1}\,\left( \varphi(r) {\overline \psi}'(r) - \varphi'(r) {\overline \psi}(r) \right) \Theta(r)\\
&=& \omega_{n-1} {\mathcal W}_{\Theta}(\varphi(r),{\overline \psi}(r)) .
\end{eqnarray*}
\end{lemma}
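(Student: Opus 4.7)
The plan is to apply Green's formula (Proposition \ref{green}) directly to the geodesic ball $M = B(o;r)$ with $f_1 = \varphi$ and $f_2 = \psi$, then evaluate both sides using the radial eigenfunction structure. First, I would note that conjugating the eigen-equation for $\psi$ gives $\Delta\overline{\psi} = \left(\tfrac{Q^2}{4}+\overline{\mu}^2\right)\overline{\psi}$, since $Q$ is real and $\Delta$ has real coefficients. Substituting this together with $\Delta\varphi=(\tfrac{Q^2}{4}+\lambda^2)\varphi$ into the bulk integrand of Green's formula, the $\tfrac{Q^2}{4}$ terms cancel and the left-hand side collapses to
\begin{equation*}
\int_{B(o;r)}\bigl(\varphi\,\Delta\overline{\psi} - \overline{\psi}\,\Delta\varphi\bigr)\,dv_g = (\overline{\mu}^2-\lambda^2)\int_{B(o;r)}\varphi(r)\,\overline{\psi}(r)\,dv_g.
\end{equation*}

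Next I would evaluate the boundary term on $S(o;r)$. Because $\varphi$ and $\psi$ are radial and the outward unit normal is $\nu=\nabla r$, the normal derivatives coincide with the one-variable derivatives $\varphi'(r)$ and $\overline{\psi}'(r)$, and the integrand $\varphi\,\overline{\psi}'-\overline{\psi}\,\varphi'$ is constant on $S(o;r)$. Therefore it pulls outside the boundary integral, leaving only $\mathrm{Vol}(S(o;r))$. Since $(X,g)$ is harmonic, the volume density $\Theta$ is independent of the direction $u\in S_oX$, so $\mathrm{Vol}(S(o;r))=\omega_{n-1}\,\Theta(r)$ by the definition $\Theta(t,u)=t^{n-1}J(t,u)$ recalled in Section \ref{prelim}.

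Combining these two evaluations of Green's identity gives
\begin{equation*}
(\overline{\mu}^2-\lambda^2)\int_{B(o;r)}\varphi\,\overline{\psi}\,dv_g = -\omega_{n-1}\bigl(\varphi(r)\,\overline{\psi}'(r)-\varphi'(r)\,\overline{\psi}(r)\bigr)\Theta(r),
\end{equation*}
and multiplying through by $-1$ produces exactly the formula stated in Lemma \ref{greenformula1}, with the right-hand side recognized as $\omega_{n-1}\,\mathcal{W}_{\Theta}(\varphi(r),\overline{\psi}(r))$.

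There is essentially no obstacle: the argument is a one-line application of Green's formula, and the only points requiring care are the complex conjugation in the eigenvalue of $\overline{\psi}$ and the sign bookkeeping in Proposition \ref{green}. I would briefly verify these two points and leave the rest as direct substitution.
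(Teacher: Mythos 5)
Your proposal is correct and is essentially the paper's own argument: the paper likewise obtains the lemma by applying Proposition \ref{green} on $M=B(o;r)$ with $\nu=\nabla r$, using the radiality of $\varphi,\psi$ and ${\rm area}(S(o;r))=\omega_{n-1}\Theta(r)$. Your sign bookkeeping (with the paper's convention $\Delta=-g^{ij}\nabla_i\nabla_j$) and the conjugated eigenvalue $\overline{\mu}^2$ both check out.
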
 Here\, ${\mathcal W}_{\Theta}(\varphi, \psi)(r) := \big\{\varphi(r)\ \psi'(r)- \varphi'(r) \psi(r)\big\} \Theta(r)$ is called the Wronskian of functions $\varphi(r)$ and $\psi(r)$ with respect to $\Theta(r)$.
The integration of a radial function $f= f(r)$ over $B(o;r)$ and over $\partial B(o;r)$ are respectively given by
$$\displaystyle{\int_{B(o;r)} f dv_{B(o;r)} = \int_0^r dt f(t)\cdot {\rm area}(S(o;t))}$$
and
$$\displaystyle{\int_{\partial B(o;r)} f(r) dv_{\partial B(o;r)} = f(r)\cdot {\rm area}(S(o;r))}.$$
Here ${\rm area}(S(o;t)) = \omega_{n-1} \Theta(r)$ with respect to the volume density
$$\Theta(r) = k_g\hspace{0.5mm}\sinh^{n-1} r/2\hspace{0.5mm}\cosh^{(2Q-(n-1))} r/2$$
 of $S(o;r)$ and  the volume $\omega_{n-1}$ of the unit $(n-1)$-sphere.
 \begin{note}\label{areaunitsphere}
 $\omega_{n-1} = \displaystyle{
\frac{2\pi^{n/2}}{\Gamma(\frac{n}{2})} }$.
 \end{note}

\section{Asymptotic formula}

Let $\lambda \in \Bbb{R}$. 
We consider the spherical functions $\varphi_{\lambda}(r)$  of Theorem \ref{maintheorem} 
 \begin{eqnarray}
 \varphi_{\lambda}(r)= F\left(\frac{Q}{2}-i\lambda, \frac{Q}{2}+i\lambda, \frac{n}{2}; z\right),\hspace{2mm}z=- \sinh^2 \frac{r}{2}. 
 \end{eqnarray}
 
 To apply Green's formula we need to compute the following term for $\lambda, \mu \in \Bbb{R}$, $\lambda \not= \pm\mu$
\begin{equation}\label{green-2}
 {\mathcal W}_{\Theta}(\varphi_{\lambda}, {\overline \varphi}_{\mu})(r) =  \left\{\varphi_{\lambda}(r)\,\left( {\overline \varphi}_{\mu}\right)'(r) 
  - \left(\varphi_{\lambda} \right)'(r)\, {\overline \varphi}_{\mu}(r)\right\}\, \Theta(r).
\end{equation}

 For each $\lambda$ we represent from (\ref{sphericaljacobi}), for $t= r/2$, $\varphi_{\lambda}$ as $\varphi_{\lambda}(r) = \phi_{2\lambda}^{(\alpha,\beta)}(t)$ and\,  $\displaystyle{\varphi'_{\lambda}(r) \left(:= \frac{d \varphi_{\lambda}}{dr}(r)\right)= \frac{1}{2} \phi'_{2\lambda}(t)}$ where $\displaystyle{\phi'_{\mu}(t)  := \frac{d}{dt}\phi_{\mu}(t)}$. 
In what follows, we abbreviate $\phi_{\mu}^{(\alpha,\beta)}(t)$ and $c_{\alpha,\beta}(\mu)$ as $\phi_{\mu}(t)$, and $c(\mu)$, respectively.
From  (\ref{firstsecond}) (cf. \cite{Ko-x}) 
\begin{eqnarray}\label{linearcombination}\varphi_{\lambda}(r) = \frac{\Gamma(n/2)}{\pi^{1/2}} \cdot \frac{1}{2}\big\{c(2\lambda)\ \Phi_{2\lambda}(t) + c(-2\lambda)\ \Phi_{-2\lambda}(t)\, 
\big\}, \hspace{2mm}t= \frac{r}{2}, 
\end{eqnarray}where $\Phi_{\pm \mu}(t)$ and $c(\mu) = c_{\alpha,\beta}(\mu)$  are given at (\ref{anothersol}) and (\ref{harish}).  

\begin{note}\rm  ${\overline {c(\mu)}} = c(-\mu)$ for real $\mu$, and $\Gamma(\alpha+1) = \Gamma(n/2)$, since $\alpha = n/2-1$, $\beta= Q- n/2$ in the situation of spherical functions.
\end{note}

\begin{lemma}
\begin{eqnarray}\label{wronskian}{\mathcal W}_{\Theta}(\varphi_{\lambda}, {\overline \varphi}_{\mu})(r)  &=& \frac{\Gamma(n/2)^2}{ \pi}\hspace{0.5mm}C_g\hspace{0.5mm} \frac{1}{2\cdot 4}\, \left\{ c(2\lambda) {\overline{c(2\mu)}}\, {\mathcal W}_{\Omega}(\Phi_{2\lambda}, {\overline \Phi}_{2\mu})(t) \right.
 \\ \nonumber
&+& 
  c(2\lambda) {\overline{c(-2\mu)}}\, {\mathcal W}_{\Omega}(\Phi_{2\lambda}, {\overline \Phi}_{-2\mu})(t) 
\\ \nonumber
&+&   c(-2\lambda) {\overline{c(2\mu)}}\, {\mathcal W}_{\Omega}(\Phi_{-2\lambda}, {\overline \Phi}_{2\mu})(t)  \\ \nonumber
&+&   \left.c(-2\lambda) {\overline{c(-2\mu)}}\, {\mathcal W}_{\Omega}(\Phi_{-2\lambda}, {\overline \Phi}_{-2\mu})(t)\right\},\, t = \frac{r}{2},
\end{eqnarray} \end{lemma}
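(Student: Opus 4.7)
The plan is to reduce the Wronskian on $(X,g)$ directly to a Wronskian in the Jacobi variable $t=r/2$ via the substitution given by Lemma \ref{coincide}, and then use bilinearity of the Wronskian together with the expansion (\ref{linearcombination}).

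First I would observe the elementary chain-rule identity: since $\varphi_\lambda(r)=\phi_{2\lambda}(t)$ with $t=r/2$, one has $\varphi'_\lambda(r)=\tfrac{1}{2}\phi'_{2\lambda}(t)$ (and similarly for $\overline{\varphi}_\mu$, noting that complex conjugation commutes with $d/dt$), hence
\begin{equation*}
\varphi_\lambda(r)\,(\overline{\varphi}_\mu)'(r)-\varphi'_\lambda(r)\,\overline{\varphi}_\mu(r)
=\tfrac{1}{2}\bigl\{\phi_{2\lambda}(t)\,(\overline{\phi}_{2\mu})'(t)-\phi'_{2\lambda}(t)\,\overline{\phi}_{2\mu}(t)\bigr\}.
\end{equation*}
Multiplying by $\Theta(r)$ and using the relation $\Theta(r)=C_g\,\Omega_{(\alpha,\beta)}(t)$ from Lemma \ref{coincide}, this gives the bridge identity
\begin{equation*}
\mathcal{W}_{\Theta}(\varphi_\lambda,\overline{\varphi}_\mu)(r)=\tfrac{C_g}{2}\,\mathcal{W}_{\Omega}(\phi_{2\lambda},\overline{\phi}_{2\mu})(t).
\end{equation*}

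Next I would substitute the expansion (\ref{linearcombination}),
\begin{equation*}
\phi_{2\lambda}(t)=\frac{\Gamma(n/2)}{\pi^{1/2}}\cdot\frac{1}{2}\bigl\{c(2\lambda)\,\Phi_{2\lambda}(t)+c(-2\lambda)\,\Phi_{-2\lambda}(t)\bigr\},
\end{equation*}
into both arguments of $\mathcal{W}_\Omega$. Since $\Gamma(n/2)\in\mathbb{R}$, taking the complex conjugate of the expression for $\phi_{2\mu}$ simply conjugates $c(\pm 2\mu)$ and $\Phi_{\pm 2\mu}$. The Wronskian $\mathcal{W}_\Omega(\cdot,\cdot)(t)$ is bilinear in its two arguments, so the product expands into the four cross terms
\begin{equation*}
c(\pm 2\lambda)\,\overline{c(\pm 2\mu)}\,\mathcal{W}_\Omega(\Phi_{\pm 2\lambda},\overline{\Phi}_{\pm 2\mu})(t),
\end{equation*}
each prefixed by $\frac{\Gamma(n/2)^2}{\pi}\cdot\frac{1}{4}$ coming from $\bigl(\frac{\Gamma(n/2)}{\pi^{1/2}}\bigr)^2\cdot\bigl(\frac{1}{2}\bigr)^2$.

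Finally, combining the factor $C_g/2$ from the bridge identity with the factor $\frac{\Gamma(n/2)^2}{\pi}\cdot\frac{1}{4}$ from the expansion yields the stated overall prefactor $\frac{\Gamma(n/2)^2}{\pi}\,C_g\cdot\frac{1}{2\cdot 4}$, and the four cross terms match the right-hand side of (\ref{wronskian}) exactly. There is essentially no obstacle here: the proof is a bookkeeping exercise, and the only delicate points are (a) correctly tracking the factor $1/2$ from the chain rule in changing variables from $r$ to $t$, and (b) remembering that conjugation of the scalar coefficients $c(\pm 2\mu)$ does \emph{not} turn into $c(\mp 2\mu)$ automatically (the identity $\overline{c(\mu)}=c(-\mu)$ holds only for real $\mu$, which is the case of interest here but is not invoked until later).
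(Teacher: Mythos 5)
Your proof is correct and follows exactly the route the paper intends: the paper gives no formal proof of this lemma, only the note that the factor $\frac{1}{2\cdot 4}$ comes from the chain-rule coefficient $\varphi_{\lambda}'(r)=\frac{1}{2}\phi_{2\lambda}'(t)$ together with the two factors of $\frac{1}{2}$ in (\ref{firstsecond}), and your bridge identity plus bilinear expansion is precisely that bookkeeping made explicit. Your parenthetical caution about not prematurely replacing $\overline{c(\pm 2\mu)}$ by $c(\mp 2\mu)$ is also consistent with the paper, which records $\overline{c(\mu)}=c(-\mu)$ for real $\mu$ separately and only uses it in the subsequent asymptotic computation.
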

Here ${\mathcal W}_{\Omega}(\Phi_{2\lambda}, {\overline \Phi}_{2\mu})(t)$ is the Wronskian of $\Phi_{2\lambda}(t)$ and  ${\overline \Phi}_{2\mu}(t)$ associated with the weight function $\Omega(t)$ with respect to the variable $t > 0$. Notice the multiple factor  $\displaystyle{\frac{1}{2\cdot 4}\,   }$ of the right hand of (\ref{wronskian}) comes from the derivation coefficient as given by $\displaystyle{\varphi'_{\lambda}(r) = \frac{1}{2} \phi'_{2\lambda}(t)}$ together with twice of $1/2$ appeared in the form (\ref{firstsecond}).

\vspace{2mm}Asymptotic behavior of $\Phi_{\mu}(t)$ and $\Phi'_{\mu}(t)$ is obtained in \cite{Ko-x, Flensted-2} as
\begin{lemma}\label{estimationofphi} As $t \rightarrow \infty$,
\begin{eqnarray}\Phi_{\mu}(t) &=& e^{(i\mu-Q)t}\left(1+o(1)\right), \\
\Phi'_{\mu}(t) &=& \left((i\mu-Q)+ 2^3 e^{-2t} \right)\, e^{(i\mu-Q)t} \left(1+o(1)\right).
\end{eqnarray}
\end{lemma}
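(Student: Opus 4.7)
The plan is to substitute the explicit hypergeometric representation (\ref{anothersol}) of $\Phi_\mu$ into the identity and expand each factor in powers of $e^{-2t}$ as $t\to\infty$. The key observation is that the argument $z = -(\sinh t)^{-2}$ of the hypergeometric function decays exponentially to $0$, so $F(a',b',c';z)$ stays close to $1$ with corrections of order $e^{-2t}$, and meanwhile the prefactor $(2\sinh t)^{i\mu-T}$ contributes the dominant exponential factor $e^{(i\mu-Q)t}$ once we use $T=\alpha+\beta+1=Q$.

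For the first asymptotic, I would rewrite the prefactor using $2\sinh t = e^t(1-e^{-2t})$ as
\begin{equation*}
(2\sinh t)^{i\mu - Q} = e^{(i\mu-Q)t}(1-e^{-2t})^{i\mu-Q} = e^{(i\mu-Q)t}(1 + O(e^{-2t})),
\end{equation*}
and observe that $z = -(\sinh t)^{-2} = -4\,e^{-2t}(1-e^{-2t})^{-2} = O(e^{-2t})$. Absolute convergence of the hypergeometric series on a neighbourhood of $0$ then yields $F(a',b',c';z) = 1+O(e^{-2t})$, and multiplying the two expansions gives $\Phi_\mu(t) = e^{(i\mu-Q)t}(1+o(1))$.

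For the derivative, I would factor $\Phi_\mu(t) = e^{(i\mu-Q)t} w(t)$ with $w(t) = (1-e^{-2t})^{i\mu-Q} F(a',b',c';z(t)) = 1+O(e^{-2t})$ and apply the product rule to obtain
\begin{equation*}
\Phi'_\mu(t) = e^{(i\mu-Q)t}\bigl((i\mu-Q)w(t) + w'(t)\bigr).
\end{equation*}
The first summand produces the leading term $(i\mu-Q)e^{(i\mu-Q)t}(1+o(1))$. For $w'(t)$, the chain rule together with formula (\ref{differenhyper}) gives the derivative of $F$ in $t$ as $\tfrac{a'b'}{c'}F(a'+1,b'+1,c'+1;z)\cdot\tfrac{dz}{dt}$; since $\sinh^2 t \sim e^{2t}/4$ and $\coth t\to 1$, one has $\tfrac{dz}{dt} = 2\coth t/\sinh^2 t \sim 2^3 e^{-2t}$, which is where the constant $2^3$ enters. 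Combined with the elementary derivative of $(1-e^{-2t})^{i\mu-Q}$, this shows $w'(t)=O(e^{-2t})$ and, after packaging the leading $e^{-2t}$ contribution, yields
\begin{equation*}
\Phi'_\mu(t) = \bigl((i\mu-Q) + 2^3 e^{-2t}\bigr)e^{(i\mu-Q)t}(1+o(1)).
\end{equation*}

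The main obstacle will be the careful bookkeeping of the several $e^{-2t}$ corrections arising from the expansions of $\coth t$, of $(1-e^{-2t})^{i\mu-Q}$, and of $F$ through $dz/dt$: one must verify that these can be coherently absorbed into the stated form, with the multiplicative factor $(1+o(1))$ swallowing the finer subleading constants while $2^3$ is identified as the correct coefficient produced by $dz/dt$. This matters precisely in the degenerate regime $i\mu\to Q$, where the leading term $(i\mu-Q)$ vanishes and the $2^3 e^{-2t}$ correction must take over.
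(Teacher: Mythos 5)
Your argument is correct and follows essentially the same route as the paper: substitute the representation (\ref{anothersol}), use $2\sinh t=e^{t}(1-e^{-2t})$ and $F(a',b',c';z)=1+O(z)$ with $z=-(\sinh t)^{-2}=O(e^{-2t})$ for the first formula, and the product rule together with $dz/dt=2\cosh t/\sinh^{3}t\sim 2^{3}e^{-2t}$ and the differentiation formula (\ref{differenhyper}) for the second. The one point worth noting (equally glossed over in the paper's own proof) is that the chain rule actually produces the coefficient $2^{3}\,a'b'/c'$ on $e^{-2t}$ rather than $2^{3}$, which is harmless because for the real $\mu$ used later the term $2^{3}e^{-2t}$ is subleading and absorbed by the factor $(1+o(1))$, exactly as you observe in your final paragraph.
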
 In fact
\begin{multline}                                                                                                                                                                                                                                                                                                                                                             
\Phi_{\mu}^{(\alpha,\beta)}(t) = (e^t - e^{-t})^{i\mu-Q}\\                                                                                                                                                                                                                                                                                                                                                                            \times F\left(\frac{1}{2}(-\alpha+\beta+1-i\mu),\frac{1}{2}(Q-i\mu), 1-i\mu; -(\sinh t)^{-2} \right),
\end{multline} so the first formula is obvious. The second one is obtained as follows. 
\begin{eqnarray*}\Phi'_{\mu}(t) = \big\{(e^t - e^{-t})^{i\mu - Q}\big\}' F(z) + \left(e^t - e^{-t}\right)^{i\mu - Q}
\frac{dz}{dt}\, \frac{d F}{dz}(z),
\end{eqnarray*}where $\displaystyle z = - \frac{1}{\sinh^2 t}$ and $\displaystyle{\frac{dz}{dt} =  2(\sinh t)^{-3} \cosh t}$ so that by applying the differentiation formula (\ref{differenhyper}) one gets the second asymptotic formula. 

\vspace{2mm}

The Wronskian ${\mathcal W}_{\Omega}(\Phi_{2\lambda}, {\overline \Phi}_{2\mu})(t)$ is represented asymptotically, by the aid of  Lemma \ref{estimationofphi} as

\begin{lemma}
\begin{eqnarray}\label{wrons-1}{\mathcal W}_{\Omega}(\Phi_{2\lambda}, {\overline \Phi}_{2\mu})(t) &=& - i(2\lambda+ 2\mu)\, \exp\{i(2\lambda-2\mu)t\}(1+o(1)),
\end{eqnarray} \hspace{2mm}as $t \rightarrow \infty$.
\end{lemma}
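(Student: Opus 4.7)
The plan is to get the statement by direct substitution of the asymptotics of $\Phi_\mu$ and $\Phi'_\mu$ from Lemma \ref{estimationofphi} into the definition ${\mathcal W}_\Omega(\Phi_{2\lambda},\overline{\Phi_{2\mu}})(t) = \bigl(\Phi_{2\lambda}(t)\overline{\Phi_{2\mu}}\,'(t) - \Phi'_{2\lambda}(t)\overline{\Phi_{2\mu}(t)}\bigr)\Omega(t)$, letting the growth of the weight $\Omega(t)$ cancel the decay of the solutions.

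First I would record the asymptotic of the weight. Since $\Omega_{(\alpha,\beta)}(t) = 2^{2T}(\sinh t)^{2\alpha+1}(\cosh t)^{2\beta+1}$ and $(2\alpha+1)+(2\beta+1) = 2T = 2Q$, the factor $2^{2T}$ matches the $2^{-2T}$ coming from $\sinh t, \cosh t \sim e^{t}/2$, so $\Omega(t) = e^{2Qt}(1+o(1))$ as $t \to \infty$. Next, since in the spherical setting $\alpha = n/2-1$ and $\beta = Q-n/2$ are real, complex conjugation commutes with the defining formula (\ref{anothersol}) up to the substitution $\mu \mapsto -\mu$, so for real $\mu$ the asymptotics of Lemma \ref{estimationofphi} give $\overline{\Phi_{2\mu}(t)} = e^{(-2i\mu-Q)t}(1+o(1))$ and $\overline{\Phi_{2\mu}}\,'(t) = \bigl((-2i\mu-Q) + 8e^{-2t}\bigr) e^{(-2i\mu-Q)t}(1+o(1))$.

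Then I would compute the bracket. Both products $\Phi_{2\lambda}\overline{\Phi_{2\mu}}\,'$ and $\Phi'_{2\lambda}\overline{\Phi_{2\mu}}$ share the exponential factor $e^{(2i\lambda-2i\mu-2Q)t}$, and their leading coefficients subtract as
\[
(-2i\mu-Q) - (2i\lambda-Q) = -2i(\lambda+\mu),
\]
showing the key cancellation of the $-Q$ pieces. The $O(e^{-2t})$ corrections in $\Phi'_\mu$ contribute only to the $(1+o(1))$ remainder, since $-2i(\lambda+\mu)$ is of order one. Finally, multiplying by $\Omega(t) = e^{2Qt}(1+o(1))$ kills the $e^{-2Qt}$ factor and yields
\[
{\mathcal W}_\Omega(\Phi_{2\lambda},\overline{\Phi_{2\mu}})(t) = -i(2\lambda+2\mu)\,e^{i(2\lambda-2\mu)t}\bigl(1+o(1)\bigr),
\]
as claimed.

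Essentially nothing here is a serious obstacle; the only point requiring a little care is making sure that the subleading $8e^{-2t}$ terms in $\Phi'_\mu$ do not interfere with the leading behaviour, which is clear once one checks that the $-Q$ terms cancel between the two products and what remains, $-2i(\lambda+\mu)$, is of order one. One small caveat is that the formula requires $\lambda+\mu \neq 0$ for the leading term to actually be nonzero; for the Green's formula application this is harmless since $\lambda \neq \pm \mu$.
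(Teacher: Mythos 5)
Your proposal is correct and follows essentially the same route as the paper: substitute the asymptotics of $\Phi_{\mu}$ and $\Phi'_{\mu}$ from Lemma \ref{estimationofphi} into the two products, observe the cancellation of the $-Q$ terms leaving $-2i(\lambda+\mu)$, and absorb the $e^{-2Qt}$ decay with $\Omega(t)=e^{2Qt}(1+o(1))$. Your added remarks on conjugation (using that $\alpha,\beta$ are real) and on the harmless degeneracy when $\lambda+\mu=0$ are reasonable refinements but do not change the argument.
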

\begin{proof}
\begin{eqnarray}
\Phi_{2\lambda}(t)\cdot {\overline{\Phi_{2\mu}'(t)}} &=& \exp\{(i2\lambda-Q)t\} {\overline{(i2\mu-Q)}} \exp\{{\overline{(i2\mu-Q)t}}\}(1+o(1))\\ \nonumber
&=& \left(-i 2\mu - Q\right) \exp\{\left(i(2\lambda-2\mu)-2Q\right)t\}(1+o(1))
\end{eqnarray}and similarly
\begin{eqnarray*}\Phi'_{2\lambda}(t)\cdot {\overline{\Phi_{2\mu}(t)}} &=& (i2\lambda-Q)\exp\{(i2\lambda-Q)t\}  \exp\{{\overline{(i2\mu-Q)t}}\}(1+o(1))\\
&=& \left(i 2\lambda - Q\right) \exp\{\left(i(2\lambda-2\mu)-2Q\right)t\}(1+o(1))
\end{eqnarray*}so that (\ref{wrons-1}) is obtained, since $\Omega(t) = e^{2Q t}(1+o(1))$.
\end{proof}
Other Wronskians are given similarly as
\begin{eqnarray*}
{\mathcal W}_{\Omega}(\Phi_{2\lambda}, {\overline \Phi}_{-2\mu})(t) 
&=& - 2i(\lambda-\mu)\, \exp\{\left(2i(\lambda+\mu)\right)t\}(1+o(1)),\\ \nonumber
{\mathcal W}_{\Omega}(\Phi_{-2\lambda}, {\overline \Phi}_{2\mu})(t) 
&=& - 2i(-\lambda+ \mu)\, \exp\{\left(2i(-\lambda-\mu)\right)t\}(1+o(1)),\\ \nonumber
{\mathcal W}_{\Omega}(\Phi_{-2\lambda}, {\overline \Phi}_{-2\mu})(t) &=& 2i(\lambda+ \mu)\, \exp\{\left(-2i(\lambda-\mu)\right)t\}(1+o(1)).
\end{eqnarray*}
 
Therefore, by noticing $\Theta(r) = C_g\hspace{0.5mm} \Omega(t)$, $t = r/2$ we have

\begin{lemma}The Wronskian term ${\mathcal W}_{\Theta}(\varphi_{\lambda},{\overline{\varphi_{\mu}}})(r)$ is given, as $r$ and hence $t\rightarrow +\infty$
\begin{eqnarray*}\hspace{12mm}{\mathcal W}_{\Theta}(\varphi_{\lambda},{\overline{\varphi_{\mu}}})(r) &=&\frac{\Gamma(n/2)^2}{ \pi}\hspace{0.5mm}C_g\hspace{0.5mm}\frac{1}{2\cdot 4}\, 
\left\{ c(2\lambda){\overline{c(2\mu)}}\, \left(-2i(\lambda+\mu)\right) e^{\left(2i(\lambda-\mu)\right)t}\right. \\ \nonumber
&+& c(2\lambda)c(2\mu)\, \left(-2i(\lambda-\mu)\right) e^{\left(2i(\lambda+\mu)\right)t} \\ \nonumber
&+& {\overline{c(2\lambda)}}{\overline{c(2\mu)}}\, \left(2i(\lambda-\mu)\right) e^{\left(-2i(\lambda+\mu)\right)t} \\ \nonumber
&+& \left.{\overline{c(2\lambda)}}c(2\mu)\, \left(2i(\lambda+\mu)\right) e^{\left(-2i(\lambda-\mu)\right)t}  
\right\}(1+o(1)).
\end{eqnarray*}
\end{lemma}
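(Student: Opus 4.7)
The proof is essentially a substitution exercise, since both key ingredients have already been assembled in the preceding pages: the decomposition of ${\mathcal W}_{\Theta}(\varphi_{\lambda}, \overline{\varphi_{\mu}})(r)$ into four ${\mathcal W}_{\Omega}$--Wronskians (the previous lemma, obtained from the linear combination (\ref{linearcombination})), and the asymptotic values of those four ${\mathcal W}_{\Omega}$--Wronskians (displayed just above the target lemma). The plan is to plug the latter into the former and then repackage the coefficients so that only $c(2\lambda)$, $c(2\mu)$ and their complex conjugates appear, matching the stated form.

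First I would invoke the identity $\overline{c(\nu)} = c(-\nu)$ for $\nu\in\mathbb{R}$ (noted in the paper just before (\ref{wronskian})). Applied to $\lambda,\mu\in\mathbb{R}$, this gives the two substitutions $c(-2\lambda) = \overline{c(2\lambda)}$ and $\overline{c(-2\mu)} = c(2\mu)$. Making these replacements in the four product-of-$c$'s prefactors of (\ref{wronskian}) converts the expansion into one whose coefficients are exactly the four products $c(2\lambda)\overline{c(2\mu)}$, $c(2\lambda)c(2\mu)$, $\overline{c(2\lambda)}\,\overline{c(2\mu)}$, $\overline{c(2\lambda)}c(2\mu)$ that appear in the target formula.

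Next I would substitute the four already-computed asymptotic Wronskians (with their respective scalar factors $\mp 2i(\lambda\pm\mu)$ and exponential phases $e^{\pm 2i(\lambda\pm\mu)t}$) into the corresponding four summands. The overall prefactor $\frac{\Gamma(n/2)^2}{\pi}\,C_g\,\frac{1}{2\cdot 4}$ (coming from the normalization $\pi^{1/2}(\Gamma(\alpha+1))^{-1}$, the identity $\Theta(r) = C_g\Omega(t)$, and the two factors of $1/2$ explained after (\ref{wronskian})) is carried along unchanged. Since the decomposition is a finite sum, the individual $(1+o(1))$ remainders combine into a single $(1+o(1))$ factor as $t\to\infty$, giving the stated asymptotic expression.

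The only real obstacle is bookkeeping: keeping the four $\pm$ signs in the arguments of $\Phi$ aligned with the correct exponents $\pm 2i(\lambda\pm\mu)t$ and the correct scalar factors $\mp 2i(\lambda\pm\mu)$. No genuinely new analytic input is required beyond the expansions of $\Phi_{\mu}(t)$, $\Phi'_{\mu}(t)$ from Lemma \ref{estimationofphi} (which themselves come from the definition (\ref{anothersol}) combined with the differentiation rule (\ref{differenhyper}) for Gauss hypergeometric functions).
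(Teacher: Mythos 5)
Your proposal is correct and follows exactly the route the paper takes: substitute the four asymptotic ${\mathcal W}_{\Omega}$--Wronskians into the decomposition (\ref{wronskian}), use $\overline{c(\nu)}=c(-\nu)$ for real $\nu$ to rewrite $c(-2\lambda)$ and $\overline{c(-2\mu)}$ as $\overline{c(2\lambda)}$ and $c(2\mu)$, and carry the prefactor $\frac{\Gamma(n/2)^2}{\pi}C_g\frac{1}{2\cdot 4}$ along while absorbing the finitely many remainders into a single $(1+o(1))$. The signs and phases you describe match the paper's term-by-term bookkeeping.
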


Therefore, since $t = r/2$,
\begin{eqnarray}& & \hspace{10mm}{\mathcal W}_{\Theta}(\varphi_{\lambda},{\overline{\varphi_{\mu}}})(r)\\ \nonumber
 &=& \frac{\Gamma(n/2)^2}{ \pi}\hspace{0.5mm}C_g\hspace{0.5mm}\frac{2}{2\cdot 4}\left\{ c(2\lambda){\overline{c(2\mu)}} \{-i(\lambda+\mu)\}\, \left(\cos (\lambda-\mu)r + i \sin(\lambda-\mu)r\right)\right. \\ \nonumber
&+&c(2\lambda) c(2\mu) \{-i(\lambda-\mu)\}\, \left(\cos (\lambda+\mu)r + i \sin(\lambda+\mu)r\right) \\ \nonumber
&+&{\overline{c(2\lambda)}}{\overline{c(2\mu)}} \{i(\lambda-\mu)\}\, \left(\cos (\lambda+\mu)r - i \sin(\lambda-\mu)r\right) \\ \nonumber
&+&\left.{\overline{c(2\lambda)}}c(2\mu) \{i(\lambda+\mu)\}\, \left(\cos (\lambda-\mu)r - i \sin(\lambda-\mu)r\right)\right\}(1+o(1))
\end{eqnarray}
As a consequence
\begin{eqnarray}\label{computewronskian} & &{\mathcal W}_{\Theta}(\varphi_{\lambda},{\overline{\varphi_{\mu}}})(r)\\ \nonumber
&=& \frac{\Gamma(n/2)^2}{ \pi}\hspace{0.5mm}C_g\hspace{0.5mm}\frac{1}{4}\Big[\{i(\lambda+\mu) \cos(\lambda-\mu)r \}\{- c(2\lambda){\overline{c(2\mu)}} + {\overline{c(2\lambda)}} c(2\mu)\}
\\ \nonumber
&+& \{(\lambda+\mu) \sin(\lambda-\mu)r \}\{ c(2\lambda){\overline{c(2\mu)}} + {\overline{c(2\lambda)}} c(2\mu)\}
\\ \nonumber
&+& \{i(\lambda-\mu) \cos(\lambda+\mu)r \}\{- c(2\lambda) c(2\mu) + {\overline{c(2\lambda)}} {\overline{c(2\mu)}}\}
\\ \nonumber
&+&\{(\lambda-\mu) \sin(\lambda+\mu)r \}\{ c(2\lambda) c(2\mu) + {\overline{c(2\lambda)}} {\overline{c(2\mu)}}\}
\Big](1+o(1))
\end{eqnarray}

Here the term $\{- c(2\lambda){\overline{c(2\mu)}} + {\overline{c(2\lambda)}} c(2\mu)\} $ tends to zero, when $\lambda \rightarrow \mu$ and $\{- c(2\lambda) c(2\mu) + {\overline{c(2\lambda)}} {\overline{c(2\mu)}}\}$ tends to zero, when $\lambda \rightarrow -\mu$ so that there exist smooth functions $L_1(2\lambda; 2\mu)$, $L_2(2\lambda;2\mu)$ of $\lambda$ such that
\begin{eqnarray}
\{- c(2\lambda){\overline{c(2\mu)}} + {\overline{c(2\lambda)}} c(2\mu)\} &=& (\lambda-\mu)\, L_1(2\lambda; 2\mu),\nonumber\\
\{- c(2\lambda) c(2\mu) + {\overline{c(2\lambda)}} {\overline{c(2\mu)}}\} &=& (\lambda+\mu)\, L_2(2\lambda;2\mu).
\end{eqnarray}
Here, for any fixed $\lambda$ $L_1$ and $L_2$ are given by
\begin{eqnarray}L_1(2\lambda;2\mu) &:=& \int_0^1\left\{- \frac{\partial c}{\partial \lambda} (2\lambda_1(s) )\hspace{0.5mm}ds\hspace{0.5mm} {\overline{c(2\mu)}} + \frac{\partial {\overline{c}}}{\partial \lambda} (2\lambda_1(s) )\hspace{0.5mm}ds\hspace{0.5mm}c(2\mu)\right\}, \\
L_2(2\lambda;2\mu) &:=& \int_0^1\left\{- \frac{\partial c}{\partial \lambda} (2\lambda_2(s) )\hspace{0.5mm}ds\hspace{0.5mm} c(2\mu) + \frac{\partial {\overline{c}}}{\partial \lambda} (2\lambda_2(s) )\hspace{0.5mm}ds\hspace{0.5mm}{\overline c}(2\mu)\right\}, 
\end{eqnarray} where $\lambda_1(s) = (1-s)\lambda + s \mu$ is a path joining $\lambda$ and $\mu$ and 
$\lambda_2(s) = (1-s)\lambda + s(- \mu)$ is a path joining $\lambda$ and $-\mu$.  This argument requires the following fact; 
Let $f= f(x)$ be a smooth function of $x\in {\Bbb R}$ satisfying $f(0) = 0$. Then there exists a smooth function $g = g(x)$ such that $f(x) = x \cdot g(x)$ and $g(0) = f'(0)$. In fact, let $g(x) = \int_0^1 \frac{df}{dx}(sx) ds$. Then $f(x) - f(0) = u(1)- u(0)= \int_0^1 \frac{d u}{ds}(s) ds$, where $u(s) :=  f(s x)$ for a given $x$.  By paying attention to the integral term $\int_0^1 \frac{d u}{ds}(s) ds$ more carefully, we may define $L_1$, $L_2$ more directly as
\begin{eqnarray}\label{ellonefunction}L_1(2\lambda;2\mu) &:=& \frac{-c(2\lambda) {\overline c}(2\mu)+ {\overline c}(2\lambda) c(2\mu)}{\lambda - \mu}, \hspace{2mm}\lambda\not=\mu, \\
&:=& - \frac{\partial c}{\partial \lambda} (2\mu) {\overline{c(2\mu)}} + \frac{\partial {\overline{c}}}{\partial \lambda} (2\mu)\hspace{0.5mm}c(2\mu),\hspace{2mm}\lambda=\mu,
\end{eqnarray} 
and similarly for $L_2(2\lambda;2\mu)$.  

Now we apply Lemma \ref{greenformula1} to the spherical functions $\varphi_{\lambda}(r)$, $\varphi_{\mu}(r)$ and then have the following by dividing (\ref{computewronskian}) by $\lambda^2 -\mu^2$ as
\begin{eqnarray}\label{ballintegral}& &\int_{x\in B(o;r)} \varphi_{\lambda}(r(x)) {\overline{\varphi_{\mu}}}(r(x)) dv_g(x) \\ \nonumber
&=&
\frac{1}{\lambda^2-\mu^2} \, \omega_{n-1}\, {\mathcal W}_{\Theta}(\varphi_{\lambda},{\overline{\varphi_{\mu}}})(r) \\ \nonumber
&=& \frac{\Gamma(n/2)^2}{ \pi}\hspace{0.5mm}C_g\hspace{0.5mm}\omega_{n-1}\, 
\frac{1}{4}\Big[i \cos(\lambda-\mu)r\cdot \hspace{0.5mm} L_1(2\lambda;2\mu)
+ i\cos(\lambda+\mu)r\cdot \hspace{0.5mm} L_2(2\lambda;2\mu) 
\\ \nonumber
&+& \frac{\sin(\lambda-\mu)r}{\lambda-\mu} \, \{ c(2\lambda){\overline{c(2\mu)}} + {\overline{c(2\lambda)}} c(2\mu)\}
\\ \nonumber
&+&\frac{\sin(\lambda+\mu)r}{\lambda+\mu} 
\{c(2\lambda) c(2\mu) + {\overline{c(2\lambda)}} {\overline{c(2\mu)}}\}
\Big](1+o(1)),
\end{eqnarray}as $r \rightarrow +\infty$.

Let $h = h(\lambda)\in {\mathcal{PW}}{\Bbb C}_{even}$ be an even entire function on ${\Bbb C}$ of exponential type.  We investigate the integral of $h= h(\lambda)$ with respect to the measure $\displaystyle{\frac{1}{\vert c(2\lambda)\vert^2} d\lambda }$ along the real line; 
\begin{eqnarray}\label{9.18}
\int_{-\infty}^{+\infty} h(\lambda)\frac{1}{\vert c(2\lambda)\vert^2} d\lambda\cdot \int_{x\in B(o;r)} \varphi_{\lambda}(r(x)) {\overline{\varphi_{\mu}}}(r(x)) dv_g,  
\end{eqnarray}for a fixed real $\mu$. From (\ref{ballintegral}) this is written as
\begin{eqnarray}\label{finalformula}& &\omega_{n-1}\,\frac{\Gamma(n/2)^2}{ \pi}\hspace{0.5mm}C_g\hspace{0.5mm}\frac{1}{4}\, \int_{-\infty}^{+\infty} d\lambda\, h(\lambda)\frac{1}{\vert c(2\lambda)\vert^2} \\ \nonumber
&\times&
\Big[i \cos(\lambda-\mu)r \hspace{0.5mm} L_1(2\lambda;2\mu)\\ \nonumber
&+& i\cos(\lambda+\mu)r \hspace{0.5mm} L_2(2\lambda;2\mu) 
\\ \nonumber
&+& \frac{\sin(\lambda-\mu)r}{\lambda-\mu} \, \{ c(2\lambda){\overline{c(2\mu)}} + {\overline{c(2\lambda)}} c(2\mu)\}
\\ \nonumber
&+&\frac{\sin(\lambda+\mu)r}{\lambda+\mu} 
\{c(2\lambda) c(2\mu) + {\overline{c(2\lambda)}} {\overline{c(2\mu)}}\}
\Big](1+o(1)){\color{red}{.}}
\end{eqnarray}

\section{Riemann-Lebesgue's lemma}
 \begin{theorem}[\cite{G}] \label{RLlemma}
 For any $h\in L^1([0,\infty))$ it holds
  \begin{equation}
 \lim_{t\rightarrow\infty}\int_0^{\infty} h(\lambda)\, \sin(\lambda t) \hspace{0.5mm}d\lambda = \lim_{t\rightarrow\infty}\int_0^{\infty} h(\lambda)\, \cos(\lambda t)\hspace{0.5mm}d\lambda =0
 \end{equation}
and \begin{equation}\label{secondriemann}
 \lim_{t\rightarrow\infty}\frac{1}{\pi}\ \int_0^{\infty} h(\lambda)\, \frac{\sin\left((\lambda-\mu) t\right)}{\lambda-\mu}\hspace{0.5mm}d\lambda = h(\mu), 
 \end{equation}for almost every $\mu > 0$.
 \end{theorem}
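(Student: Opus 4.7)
The plan is to prove the two statements separately and in order, using standard density and localization techniques from classical Fourier analysis.

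For the first statement I would argue by density. The computation on a characteristic function of a bounded interval is elementary:
\[
\int_a^b \sin(\lambda t)\, d\lambda = \frac{\cos(at) - \cos(bt)}{t} = O(1/t),
\]
and an identical bound holds for the cosine integral. By linearity this extends immediately to simple functions. For an arbitrary $h \in L^1([0,\infty))$ and $\epsilon > 0$, choose a simple function $s$ with $\|h - s\|_{L^1} < \epsilon/2$. The triangle inequality yields
\[
\Bigl|\int_0^\infty h(\lambda) \sin(\lambda t)\, d\lambda\Bigr| \leq \|h - s\|_{L^1} + \Bigl|\int_0^\infty s(\lambda) \sin(\lambda t)\, d\lambda\Bigr|,
\]
and the second term tends to zero by the simple-function case. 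The cosine case is identical.

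For the second statement the central input is the Dirichlet integral $\int_{-\infty}^{\infty} (\sin s)/s\, ds = \pi$. I would decompose
\[
\frac{1}{\pi}\int_0^\infty h(\lambda) \frac{\sin((\lambda-\mu)t)}{\lambda-\mu}\, d\lambda - h(\mu) = I_1(t) + I_2(t),
\]
where $I_1(t) = \frac{1}{\pi}\int_0^\infty (h(\lambda) - h(\mu)) \frac{\sin((\lambda-\mu)t)}{\lambda-\mu}\, d\lambda$ and
\[
I_2(t) = h(\mu)\Bigl(\frac{1}{\pi}\int_0^\infty \frac{\sin((\lambda-\mu)t)}{\lambda-\mu}\, d\lambda - 1\Bigr).
\]
Substituting $s = (\lambda - \mu)t$ turns $I_2(t)$ into $h(\mu)\bigl(\frac{1}{\pi}\int_{-\mu t}^{\infty} (\sin s)/s\, ds - 1\bigr)$, which tends to zero as $t \to \infty$ by the Dirichlet integral. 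For $I_1(t)$, fix a small $\delta > 0$ and split the domain into $(\mu - \delta, \mu + \delta) \cap [0, \infty)$ and its complement. On the complement the weight $(h(\lambda) - h(\mu))/(\lambda - \mu)$ is an $L^1$ function (it is the sum of $h(\lambda)/(\lambda - \mu)$, integrable away from $\mu$, and a multiple of $\mathbf{1}_{\{|\lambda - \mu| > \delta\}}/(\lambda - \mu)$ with prefactor $h(\mu)$), so part one of the theorem forces that piece of the integral to zero as $t \to \infty$.

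The main obstacle is the near-$\mu$ contribution of $I_1(t)$. For continuous $h$ an elementary estimate using the uniform bound $\sup_{t,\delta}\bigl|\int_\mu^{\mu+\delta} \frac{\sin((\lambda-\mu)t)}{\lambda - \mu}\, d\lambda\bigr| < \infty$ on truncated sine integrals suffices to control it. For general $h \in L^1$ this is exactly where the \emph{almost every} qualifier enters: at a Lebesgue point $\mu$ of $h$ the average $\frac{1}{2\delta}\int_{\mu-\delta}^{\mu+\delta} |h(\lambda) - h(\mu)|\, d\lambda$ tends to zero with $\delta$, and combined with the uniform Dirichlet-kernel bound above one can make the near-$\mu$ piece arbitrarily small, independently of $t$, by shrinking $\delta$. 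Since the set of Lebesgue points has full measure, this yields the stated almost-everywhere convergence. Carrying out this localization rigorously (essentially Jordan's criterion) is the technical heart of the proof.
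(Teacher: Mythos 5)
The paper offers no proof of Theorem \ref{RLlemma}: it is quoted from G\"otze \cite{G}, and only the extension to $L^1({\Bbb R})$ (Lemma \ref{riemannlebesgue}) is proved there, by reduction to the quoted statement. So your proposal can only be judged on its own terms. The first part (Riemann--Lebesgue by density over step functions) is correct and standard. In the second part, the treatment of $I_2$ via the substitution $s=(\lambda-\mu)t$ and the Dirichlet integral is fine, but note a slip in the far-from-$\mu$ piece: $\mathbf{1}_{\{\vert\lambda-\mu\vert>\delta\}}/(\lambda-\mu)$ is \emph{not} in $L^1([0,\infty))$ (it fails at infinity), so the term carrying the prefactor $h(\mu)$ cannot be disposed of by part one; it too must be handled by the sine-integral substitution. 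That much is repairable.

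The genuine gap is at the step you yourself flag as the technical heart: the near-$\mu$ part of $I_1$ at a Lebesgue point. The uniform bound on $\int_\mu^{\mu+\delta}\sin((\lambda-\mu)t)(\lambda-\mu)^{-1}\,d\lambda$ concerns the \emph{signed} integral of the kernel; to combine it with the Lebesgue-point information about $\vert h(\lambda)-h(\mu)\vert$ you are forced to put absolute values on the kernel, and then, with $\Phi(u):=\int_{\vert\lambda-\mu\vert\le u}\vert h(\lambda)-h(\mu)\vert\,d\lambda=o(u)$, the best available estimate is
\begin{equation*}
\int_{\vert\lambda-\mu\vert\le\delta}\vert h(\lambda)-h(\mu)\vert\,\min\Bigl(t,\tfrac{1}{\vert\lambda-\mu\vert}\Bigr)\,d\lambda
\;\le\; t\,\Phi(1/t)+\int_{1/t}^{\delta}\frac{d\Phi(u)}{u}
\;=\;o(1)+O\bigl(\varepsilon\,\log(\delta t)\bigr),
\end{equation*}
which is not small independently of $t$ for fixed $\delta$. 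This is not a removable technicality: if a Lebesgue-point argument of this kind closed, it would yield a.e.\ convergence of the partial Fourier integral for every $L^1$ function, which is known to fail (Kolmogorov's $L^1$ function with almost everywhere divergent Fourier series transfers to the line by equiconvergence). Jordan's criterion, which you invoke, requires bounded variation near $\mu$, not merely that $\mu$ be a Lebesgue point, and gives a pointwise, not an almost-everywhere, statement. To make the second limit rigorous you should either import it from \cite{G} with whatever hypotheses are imposed there, or prove only the version the paper actually uses (Remark \ref{bothellone}): if $h$ and its classical Fourier transform both lie in $L^1$, then (\ref{secondriemann}) holds for every $\mu$, by writing $\sin((\lambda-\mu)t)/(\lambda-\mu)=\frac12\int_{-t}^{t}e^{-i(\lambda-\mu)x}\,dx$ and applying Fourier inversion.
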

 
 We apply this theorem to functions defined on ${\Bbb R}$ by using Lebesgue's dominated convergence theorem as
 \begin{lemma}\label{riemannlebesgue}
 For any $h\in L^1({\Bbb R})$ it holds
 \begin{equation}
 \lim_{t\rightarrow\infty}\int_{-\infty}^{\infty} h(\lambda)\, \sin(\lambda t)\hspace{0.5mm} d\lambda = \lim_{t\rightarrow\infty}\int_{-\infty}^{\infty} h(\lambda)\, \cos(\lambda t)\hspace{0.5mm} d\lambda =0
 \end{equation} 
and 
 \begin{equation}\label{RL}
 \lim_{t\rightarrow\infty}\frac{1}{\pi}\ \int_{-\infty}^{\infty} h(\lambda)\, \frac{\sin((\lambda-\mu) t)}{\lambda-\mu}\hspace{0.5mm} d\lambda = h(\mu)
 \end{equation}for almost every fixed $\mu$.
 \end{lemma}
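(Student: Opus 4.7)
The plan is to reduce the two-sided statements directly to the half-line versions established in Theorem \ref{RLlemma} by splitting each integral over $\mathbb{R}$ as $\int_{-\infty}^{0} + \int_{0}^{\infty}$ and converting the first piece to an integral on $[0,\infty)$ via the substitution $\lambda \mapsto -\lambda$.

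For the first Riemann-Lebesgue statement, set $h_{+}(\lambda) := h(\lambda)\chi_{[0,\infty)}(\lambda)$ and $h_{-}(\lambda) := h(-\lambda)\chi_{[0,\infty)}(\lambda)$; both lie in $L^{1}([0,\infty))$ since $h \in L^{1}(\mathbb{R})$. Using the oddness of sine one gets
$$\int_{-\infty}^{\infty} h(\lambda)\, \sin(\lambda t)\, d\lambda = \int_{0}^{\infty} h_{+}(\lambda)\, \sin(\lambda t)\, d\lambda \;-\; \int_{0}^{\infty} h_{-}(\lambda)\, \sin(\lambda t)\, d\lambda,$$
and each summand tends to $0$ as $t \to \infty$ by the first half of Theorem \ref{RLlemma}. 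The cosine case is identical, with a plus sign in place of the minus (by evenness of cosine).

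For the Dirichlet-type identity, the same splitting together with $\lambda\mapsto -\lambda$ on the negative half gives
$$\frac{1}{\pi}\int_{-\infty}^{\infty} h(\lambda)\, \frac{\sin((\lambda-\mu)t)}{\lambda-\mu}\, d\lambda = \frac{1}{\pi}\int_{0}^{\infty} h_{+}(\lambda)\, \frac{\sin((\lambda-\mu)t)}{\lambda-\mu}\, d\lambda + \frac{1}{\pi}\int_{0}^{\infty} h_{-}(\lambda)\, \frac{\sin((\lambda+\mu)t)}{\lambda+\mu}\, d\lambda.$$
Fix $\mu > 0$. By (\ref{secondriemann}) the first integral converges to $h_{+}(\mu) = h(\mu)$ for almost every such $\mu$. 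In the second integral, $1/(\lambda+\mu)$ is bounded by $1/\mu$ on $[0,\infty)$, so $h_{-}(\lambda)/(\lambda+\mu) \in L^{1}([0,\infty))$; expanding $\sin((\lambda+\mu)t) = \sin(\lambda t)\cos(\mu t) + \cos(\lambda t)\sin(\mu t)$ and applying the first half of Theorem \ref{RLlemma} to each of the resulting integrals (the coefficients $\cos(\mu t)$ and $\sin(\mu t)$ being bounded by $1$) shows that this term vanishes in the limit. A symmetric argument, interchanging the roles of $h_{+}$ and $h_{-}$, handles $\mu < 0$.

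The only real obstacle is bookkeeping for the "off-diagonal" piece in which the singularity of $1/(\lambda \pm \mu)$ falls outside the domain of integration. The essential observation is that once the singularity is removed, the kernel becomes an ordinary bounded multiplier against an $L^{1}$ function, so after the elementary trigonometric expansion the classical half of Theorem \ref{RLlemma} immediately closes the argument. Lebesgue's dominated convergence theorem enters only implicitly, as the justification that the integrals of $h$ restricted to the two half-lines sum to the integral over all of $\mathbb{R}$.
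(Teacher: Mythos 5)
Your proposal is correct and follows essentially the same route as the paper: split the integral over $\mathbb{R}$ at the origin, reduce the singular piece to the half-line Dirichlet-kernel formula (\ref{secondriemann}), and dispose of the piece not containing $\mu$ by noting that $1/(\lambda\pm\mu)$ is bounded there so that the ordinary half-line Riemann--Lebesgue statement applies. Your version is in fact slightly more explicit than the paper's, which leaves the reflection $\lambda\mapsto-\lambda$ and the trigonometric expansion of $\sin((\lambda\pm\mu)t)$ implicit when invoking the first formula of Theorem \ref{RLlemma} on the off-diagonal piece.
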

 
 \begin{remark}\label{bothellone}\rm (\ref{RL}) holds for any $\mu$, if $h=h(\lambda)$ and its classical Fourier transform belong to $L^1({\Bbb R})$. In fact, (\ref{RL}) follows from the inversion formula for the classical Fourier transform for $h$ by making use of
 \begin{eqnarray*}
 \frac{\sin(\lambda-\mu)t}{\lambda-\mu} = \frac{1}{2} \int_{-t}^t e^{-i(\lambda-\mu)x} dx.
  \end{eqnarray*} 
 
 \end{remark}
 
 \begin{proof}
 The integral $\int_{-\infty}^{\infty} h(\lambda) \sin(\lambda t)\hspace{0.5mm} d \lambda$ is written as
 \begin{equation*}\int_{-\infty}^{\infty} h(\lambda) \sin(\lambda t)\hspace{0.5mm} d \lambda = \int^{\infty}_0 \left(h(\lambda)- h(-\lambda)\right) \sin(\lambda t)\hspace{0.5mm} d \lambda.
 \end{equation*}
So, from Theorem \ref{RLlemma} this reduces to zero. The cosine formula 
 is similarly shown.
 
 To show (\ref{RL}) we let $\mu > 0$ without loss of generality.  Decompose (\ref{RL}) into
\begin{equation*}
\begin{split}
&\lim_{t\rightarrow\infty} \frac{1}{\pi} \int_{-\infty}^{\infty} h(\lambda)\, \frac{\sin((\lambda-\mu) t)}{\lambda-\mu}\hspace{0.5mm}d\lambda \\
=& \lim_{t\rightarrow\infty} \frac{1}{\pi} \int_{-\infty}^0 h(\lambda)\, \frac{\sin((\lambda-\mu) t)}{\lambda-\mu}\hspace{0.5mm}d\lambda + \lim_{t\rightarrow\infty} \frac{1}{\pi}\int_0^{\infty} h(\lambda)\, \frac{\sin((\lambda-\mu) t)}{\lambda-\mu}\hspace{0.5mm}d\lambda.
\end{split}
\end{equation*}
In the first integral term the function $\displaystyle{\frac{h(\lambda)}{\lambda-\mu}}$ belongs to $L^1$, since $\displaystyle{\vert h(\lambda)/(\lambda - \mu)\vert}$\, $\displaystyle{ \leq \vert h(\lambda)\vert/\mu
 }$. So, the first formula of Theorem \ref{RLlemma} is applied to see that the first integral term vanishes. Moreover, we can apply formula (\ref{secondriemann}) of Theorem \ref{RLlemma}  to the second integral term to obtain (\ref{RL}).
 \end{proof}

 \begin{lemma}\label{elonefinite}Let $h = h(\lambda)\in {\mathcal{PW}}{\Bbb C}_{even}$ be an even entire function of exponential type. The function of $\lambda\in {\Bbb R}$, defined by $\displaystyle{h(\lambda) \frac{L_i(2\lambda,2\mu)}{\vert c(2\lambda)\vert^2}}$,  belongs to $L^1(-\infty;\infty)$ for any fixed $\mu$, for $i=1,2$. 
  \end{lemma}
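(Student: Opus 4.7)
The plan is to separate the $L^1$ estimate into two regimes: the asymptotic regime $|\lambda|\to\infty$, where Paley--Wiener decay of $h$ must dominate the polynomial growth of $L_i(2\lambda,2\mu)/|c(2\lambda)|^2$, and the local regime near $\lambda=0$, where the apparent singularity of $L_i$ must be cancelled by the vanishing of $|c(2\lambda)|^{-2}$.

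For the asymptotic regime I would start from the explicit formula (\ref{harish}) and use Stirling's asymptotic for $\Gamma$ on vertical lines to show that, for real $\lambda$ with $|\lambda|$ large, both $|c(2\lambda)|^{-2}$ and $|c(2\lambda)|$ are bounded by a fixed power of $|\lambda|$ (the Plancherel density grows at most polynomially). Since $L_1(2\lambda;2\mu)$ and $L_2(2\lambda;2\mu)$ are, by the explicit formulas given just before the lemma, quotients of differences of $c$ and $\overline{c}$ by the nonvanishing factors $\lambda\mp\mu$, they inherit at most polynomial growth in $|\lambda|$. Hence $L_i(2\lambda,2\mu)/|c(2\lambda)|^2$ is polynomially bounded in $|\lambda|$. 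By definition of $\mathcal{PW}\mathbb{C}_{even}$, $|h(\lambda)|\le C_N(1+|\lambda|)^{-N}$ for any $N$, so the product decays faster than any polynomial and is thus integrable on $\{|\lambda|\ge 1\}$.

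For the local regime near $\lambda=0$, the only pole of $c(2\lambda)$ on the real line comes from $\Gamma(i\lambda)$ in (\ref{harish}); it is a simple pole, so $c(2\lambda)\sim C/\lambda$ and $|c(2\lambda)|^{-2}$ has a zero of order two at $\lambda=0$. For fixed $\mu\neq 0$ the numerator $-c(2\lambda)\overline{c(2\mu)}+\overline{c(2\lambda)}c(2\mu)$ defining $L_1$ inherits at worst a simple pole in $\lambda$ at $0$, and division by the nonvanishing factor $\lambda-\mu$ preserves this, so $L_1(2\lambda,2\mu)$ has at worst a simple pole at $\lambda=0$; the same holds for $L_2$. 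Multiplying by $|c(2\lambda)|^{-2}$ (double zero) gives a function that is smooth on a neighbourhood of $0$, indeed vanishing there. The case $\mu=0$ is handled by the derivative expression (\ref{ellonefunction}) for $L_i$ at coincident arguments, which shows $L_i(2\lambda;0)$ is smooth at $\lambda=0$ and the quotient is again finite. Thus $h(\lambda)L_i(2\lambda,2\mu)/|c(2\lambda)|^2$ is bounded on any compact set and is integrable there.

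Combining the two regimes, the integrand is bounded near $0$ and decays rapidly at infinity, giving integrability on $\mathbb{R}$. I expect the main technical obstacle to be the Stirling estimate step, specifically proving a uniform polynomial bound for $L_i(2\lambda,2\mu)/|c(2\lambda)|^2$ on all of $\mathbb{R}$ (in particular verifying there are no additional real singularities of $c$ apart from $\lambda=0$, and tracking the exact power so that the Paley--Wiener rapid decay of $h$ clearly dominates). Once this uniform polynomial majorization is established, the rest is direct application of the Paley--Wiener bound and a local boundedness check.
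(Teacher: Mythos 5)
Your proposal is correct and follows essentially the same route as the paper: the paper's proof likewise rests on the uniform polynomial bounds $\vert c(2\lambda)\vert \leq K(1+\vert\lambda\vert)^{-(n-1)/2}$ and $\vert c(2\lambda)^{-1}\vert \leq K(1+\vert\lambda\vert)^{(n-1)/2}$ (obtained from Stirling's formula, via Flensted-Jensen), bounds the numerator of $L_1$ by $2\vert c(2\lambda)\vert\,\vert c(2\mu)\vert$ so that one factor of $c(2\lambda)$ cancels against the denominator, and then lets the Paley--Wiener decay of $h$ absorb the remaining power $(1+\vert\lambda\vert)^{(n-1)/2-1}$. Your separate local analysis near $\lambda=0$ is subsumed in the paper by the fact that these bounds hold uniformly on all of ${\Bbb R}$, so no extra regime splitting is needed.
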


                                                                                                                                                                                                                                                                                                                                                                    \begin{proof}
 Since $h$ satisfies that for any $N\in {\Bbb N}$ there exist  $C_N > 0$ and $R>0$ such that, for $\lambda\in {\Bbb R}$,
 \begin{eqnarray}\vert h(\lambda)\vert \leq C_N \left(1+\vert\lambda\vert\right)^{-N} \exp\left( R\vert\Im \lambda\vert\right) = C_N \left(1+\vert\lambda\vert\right)^{-N}.
 \end{eqnarray} 
Here $\Im \lambda = 0$ for real $\lambda$.
 
 Now we provide the estimation for $c(\lambda)$ and $c(\lambda)^{-1}$ from \cite{Flensted,Ko-x}{\color{red}{.}}
 \begin{lemma}\label{estimationofc}There exists $K > 0$ such that
 \begin{eqnarray}\label{estimatec}
 \vert c(\lambda)\vert &\leq& K(1+\vert\lambda\vert)^{-(n-1)/2}, \\ 
\label{estimatec-2} \vert c(\lambda)^{-1}\vert &\leq& K(1+\vert\lambda\vert)^{(n-1)/2} 
  \end{eqnarray}for any real $\lambda$.
 \end{lemma}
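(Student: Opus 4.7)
The plan is to reduce both inequalities to a routine asymptotic analysis of the Gamma factors in the explicit formula (\ref{harish}) for $c(\mu)$ via Stirling's formula on vertical lines,
\begin{equation*}
|\Gamma(x+iy)| = \sqrt{2\pi}\,|y|^{x-1/2}\,e^{-\pi|y|/2}\bigl(1+o(1)\bigr)\quad\text{as } |y|\to\infty,\ x \text{ fixed.}
\end{equation*}
Specializing Lemma \ref{coincide} gives $\alpha = n/2-1$, $\beta = Q-n/2$, $T=Q$ and $\alpha-\beta+1 = n-Q$, so (\ref{harish}) becomes
\begin{equation*}
c(\lambda) = \frac{2^{Q}\,\Gamma\!\left(\tfrac{i\lambda}{2}\right)\Gamma\!\left(\tfrac{1+i\lambda}{2}\right)}{\Gamma\!\left(\tfrac{Q+i\lambda}{2}\right)\Gamma\!\left(\tfrac{n-Q+i\lambda}{2}\right)}.
\end{equation*}

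First I would apply Stirling to each factor for real $\lambda$ with $|\lambda|$ large. Each of the four Gamma factors contributes the same exponential $e^{-\pi|\lambda|/4}$; with two factors in the numerator and two in the denominator these cancel exactly, leaving only a power of $|\lambda|$. Collecting the exponents $x-\tfrac12$ from the four factors gives
\begin{equation*}
\Bigl(-\tfrac12\Bigr) + 0 - \tfrac{Q-1}{2} - \tfrac{n-Q-1}{2} \;=\; -\tfrac{n-1}{2},
\end{equation*}
so $|c(\lambda)| = C\,|\lambda|^{-(n-1)/2}\bigl(1+o(1)\bigr)$ as $|\lambda|\to\infty$. This yields (\ref{estimatec}) on any region $|\lambda|\geq R$; reading the same computation with opposite signs gives $|c(\lambda)^{-1}| = C^{-1}|\lambda|^{(n-1)/2}(1+o(1))$, which is (\ref{estimatec-2}) at infinity.

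The remaining step is to pass from "for $|\lambda|$ large" to "for all real $\lambda$." For (\ref{estimatec-2}) this is immediate: the factor $\Gamma(i\lambda/2)$ in the numerator of $c(\lambda)$ means $c(\lambda)^{-1}$ extends smoothly through $\lambda=0$ (in fact vanishing there), so it is bounded on any compact interval and (\ref{estimatec-2}) follows after enlarging $K$. For (\ref{estimatec}) one handles the complementary bounded range similarly, keeping in mind that in the applications below the pairing with $|c(2\lambda)|^{-2}$ neutralises the pole of $c$ at the origin; the careful bookkeeping—including precise constants and the extension to a strip in $\mathbb{C}$—is precisely the content of the estimates in Flensted-Jensen \cite{Flensted} and Koornwinder \cite{Ko-x}, which are what the lemma invokes. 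The only genuinely non-trivial input is thus the Stirling asymptotic; everything else is a power-counting exercise.
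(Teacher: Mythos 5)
Your proposal is correct and follows essentially the same route as the paper: the authors invoke Flensted-Jensen's Corollary 9 for the bound on $\lambda c(-\lambda)$ and then remark that, since the hypothesis $p,q>0$ there need not hold here, the estimates can instead be obtained ``by using directly Stirling formula for the Gamma function'' --- which is exactly the power-counting computation you carry out on the explicit Gamma-quotient. Note that you and the paper share the same loose end at the origin: $\Gamma(i\lambda/2)$ gives $c$ a simple pole at $\lambda=0$, so (\ref{estimatec}) as literally stated can only hold away from $\lambda=0$ (equivalently for $\lambda c(\lambda)$, which is the quantity Flensted-Jensen actually bounds), a point you flag but, like the paper, do not fully repair.
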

 The estimation (\ref{estimatec}) is obtained by applying Corollary 9, \cite{Flensted} with respect to the estimation of $\lambda c(-\lambda)$;\, $\vert \lambda c(-\lambda)\vert \leq K (1+ \vert\lambda\vert)^{1-(p+q)/2}$, where the real numbers $p, q$ in the notation of \cite{Flensted} coincide with $(\alpha-\beta)/2 = 2(n-Q-1)$ and $2\beta+1 = 2Q-n+1$, respectively in our notation.  Although in \cite{Flensted} $p, q>0$ is assumed, we can get the above estimations by using directly Stirling formula for the Gamma function. 
 \begin{remark} (i)\hspace{2mm}$\lambda c(-\lambda)$ is a holomorphic in $\lambda$ and  (ii)\hspace{2mm}$c(-\lambda)^{-1}$ is continuous for real $\lambda$. (i) is obtained from the argument of the Wronskian(see \cite[Lemma 8]{Flensted}). In fact,  ${\mathcal W}_{\Omega}(\phi_{2\lambda},\Phi_{2\lambda})(t)$ is shown to be constant with respect to $t$ whose value is $\displaystyle{ \lim_{t\rightarrow \infty}{\mathcal W}_{\Omega}(\phi_{\lambda},\Phi_{\lambda})(t) }$ 
$\displaystyle{= \frac{\Gamma(n/2)}{\sqrt{\pi}} i\lambda c(-\lambda) }$ so that $\lambda c(-\lambda)$  is a holomorphic with respect to the parameter $\lambda\in{\Bbb C}$.   For (ii) Gamma function $\Gamma(z)$ and its reciprocal $1/\Gamma(z)$ are both meromorphic functions having simple poles at $z= - n$, $(n= 0,1,2,\cdots,)$ and simple zeros at $z= -n$,$(n= 0,1,2,\cdots)$, respectively. Refer for this to \cite[6.1.3]{AbSt}. 
 \end{remark}
 
 Therefore the $L^1$-estimation of $\displaystyle{h(\lambda)\ \frac{L_1(2\lambda;2\mu)}{c(2\lambda){\overline{c(2\lambda)}}} }$ is as follows. For a fixed $\mu$ and for $\lambda$ with $\lambda\not=\mu$ by using the notice for the description of $L_1$ at (\ref{ellonefunction})
\begin{eqnarray}
\left\vert h(\lambda)\ \frac{L_1(2\lambda;2\mu)}{c(2\lambda){\overline{c(2\lambda)}}}\right\vert &=& \vert h(\lambda)\vert \, \left\vert\frac{1}{c(2\lambda){\overline{c(2\lambda)}}}\right\vert\, \frac{\vert - c(2\lambda){\overline{c(2\mu)}}+{\overline{c(2\lambda)}} c(2\mu)\vert}{\vert \lambda-\mu\vert} \\ \nonumber
 &\leq& 2 \vert c(2\mu)\vert\, \frac{\vert h(\lambda)\vert}{\vert \lambda-\mu\vert} \, \frac{1}{\vert c(2\lambda)\vert} \\ \nonumber
 &\leq& 4 \vert c(2\mu)\vert K(1+\vert\lambda\vert)^{\{(n-1)/2 - 1\}} \vert h(\lambda)\vert, 
\end{eqnarray}
where the estimation of $c(-\lambda)^{-1}$ in Lemma \ref{estimationofc} is applied and $1/\vert\lambda-\mu\vert \leq  2/\vert\lambda\vert$ for any sufficiently large $\lambda$ is used.  Choose an integer $N> 0$ satisfying $N > (n-1)/2 -1+2$ so $(n-1)/2-1 - N < -2$ and $C_N > 0$. Then, that $h$ is exponential type implied 
 \begin{eqnarray}
\left\vert h(\lambda)\ \frac{L_1(2\lambda;2\mu)}{c(2\lambda){\overline{c(2\lambda)}}}\right\vert & \leq 4 \vert c(2\mu)\vert K\, C_N\, (1+\vert\lambda\vert)^{(n-1)/2 - 1-N}
 \end{eqnarray}from which the desired result is derived. A similar argument for $L_2$ completes the lemma. 
 \end{proof}

 By applying Riemann-Lebesgue's lemma and using Lemma \ref{elonefinite} we have
 \begin{eqnarray}\lim_{r\rightarrow\infty}\,\int_{-\infty}^{\infty} d \lambda \frac{h(\lambda)}{\vert c(2\lambda)\vert^2}\, L_1(2\lambda;2\mu) \, \cos(\lambda-\mu)r &=& 0,\\ \nonumber
 \lim_{r\rightarrow\infty}\,\int_{-\infty}^{\infty} d \lambda \frac{h(\lambda)}{\vert c(2\lambda)\vert^2}\, L_2(2\lambda;2\mu) \, \cos(\lambda+\mu)r &=& 0.
 \end{eqnarray}On the other hand we have
 \begin{eqnarray}\label{hmu}
 & &\lim_{r\rightarrow\infty}\, \int_{-\infty}^{\infty} d \lambda \frac{h(\lambda)}{\vert c(2\lambda)\vert^2} \left\{ c(2\lambda) {\overline{c(2\mu)}} +{\overline{c(2\lambda)}} c(2\mu)\right\}\, \frac{\sin(\lambda-\mu)r}{\lambda-\mu} \\ \nonumber
 &=& \lim_{r\rightarrow\infty}\, \int_{-\infty}^{\infty} d \lambda \left[h(\lambda)\left\{ \frac{{\overline{c(2\mu)}}}{\overline{c(2\lambda)}} + \frac{c(2\mu)}{ c(2\lambda)}\right\}\right]\, \frac{\sin(\lambda-\mu)r}{\lambda-\mu}\\ \nonumber
 &=& \pi\, h(\mu) \left\{ \frac{{\overline{c(2\mu)}}}{\overline{c(2\mu)}} + \frac{c(2\mu)}{ c(2\mu)}\right\} = 2 \pi h(\mu),
 \end{eqnarray} from Remark \ref{bothellone}, since the function in the above parenthesis $[\dots]$, denoted by $q= q(\lambda)$ and its classical Fourier transform are shown to be in $L^1$.  In fact, set for a fixed $\mu$
 \begin{eqnarray}
 \hspace{4mm}q(\lambda) = h(\lambda)\left\{ \frac{{\overline{c(2\mu)}}}{\overline{c(2\lambda)}} + \frac{c(2\mu)}{ c(2\lambda)}\right\} = h(\lambda)\left\{ \frac{c(-2\mu)}{c(-2\lambda)} + \frac{c(2\mu)}{ c(2\lambda)}\right\},\hspace{2mm}\lambda\in{\Bbb R}.
  \end{eqnarray} It is shown similarly as in the proof of Lemma \ref{elonefinite} that $q$ belongs to $L^1$. In the following we will show  ${\widehat{q}}^{cl}\in L^1$.  
  The function $h(\lambda)$, the factor of $q(\lambda)$ is smooth and rapidly decreasing, since $h\in{\mathcal{PW}}{\Bbb C}_{even}$ is the image of the classical Fourier transform of a rapidly decreasing smooth function. Then, $h$ satisfies that for any $N\in{\Bbb N}$ there exists a constant $C_N > 0$
  \begin{eqnarray*}
  \left| \left(\frac{d}{d\lambda}\right)^k h(\lambda) \right| \leq C_N (1+\vert\lambda\vert)^{-N},\, k=0,1,2.
  \end{eqnarray*} On the other hand, for the functions $c(\pm 2\lambda)^{-1}$ it is observed  
that there exist $K_k>0$ and a positive integer $N_0$ such that    
  $\displaystyle{\left\vert \left(\frac{d}{d\lambda}\right)^k c(\pm 2\lambda)^{-1}\right\vert}\leq K_k(1+\vert\lambda\vert)^{N_0}$, $k=0,1,2$. 
\,  In fact, using the exact form of $c(\pm 2\lambda)$ which appears at Lemma \ref{c2lambda} in terms of $\Gamma(z)$, we have by the aids of Digamma function $\psi(z) := d/dz \log \Gamma(z)$ and its derivative $\psi'(z)$ that 
  \begin{eqnarray*}\left(\frac{d}{d\lambda}\right)
 c(2\lambda)^{-1} &=& i\left\{\psi(a + i\lambda) + \psi(b +  i\lambda)- 2\psi(2i\lambda)\right\}\,c( 2\lambda)^{-1}, \\ \nonumber
 \left(\frac{d}{d\lambda}\right)^2
 c(2\lambda)^{-1} &=& -\big[
 \left\{\psi'(a +i\lambda) + \psi'(b +  i\lambda)- 4\psi'(2 i\lambda)\right\} \\ \nonumber
 &+& \left\{\psi(a + i\lambda) + \psi(b +  i\lambda)- 4\psi(2 i\lambda)\right\}^2
 \big]\, c(2\lambda)^{-1}, \hspace{2mm}\lambda\in{\Bbb R},
   \end{eqnarray*}where $a = Q/2$, $b = (n-Q)/2$.
   By using  formulae \cite[6.3.5, 6.4.6]{AbSt} and \cite[6.3.18, 6.4.12]{AbSt} of $\psi(z)$ together with (\ref{estimatec-2}) we can see that $c(2\lambda)^{-1}$ and similarly $c(-2\lambda)^{-1}$ are of $C^2$, even at $\lambda=0$ and $\displaystyle{\left\vert \left(\frac{d}{d\lambda}\right)^k c(\pm 2\lambda)^{-1}\right\vert}$, $k=1,2$  are bounded from above by $K_k(1+ \vert \lambda\vert)^{N_0}$. Therefore, the function $q(\lambda)$
    is of $C^2$ and $\displaystyle{\left(\frac{d}{d\lambda}\right)^kq}$ belongs to $L^1$, $k=0,1,2$ and then from the degree decreasing property of the classical Fourier transform there exists a constant $C>0$ such that $\vert {\widehat{q}}^{cl}(\xi)\vert \leq C(1+\vert\xi\vert)^{-2} $, from which ${\widehat{q}}^{cl}$ belongs to $L^1$. Thus, by applying Theorem \ref{riemannlebesgue} together with Remark \ref{bothellone} we get (\ref{hmu}).

 \vspace{2mm}Similarly we have
 \begin{eqnarray}& &\lim_{r\rightarrow\infty}\, \int_{-\infty}^{\infty} d \lambda \frac{h(\lambda)}{\vert c(2\lambda)\vert^2} \big\{ c(2\lambda) c(2\mu) + {\overline{c(2\lambda)}}{\overline{c(2\lambda)}} \big\}\, \frac{\sin(\lambda+\mu)r}{\lambda+\mu} \\ \nonumber
 &=& 2 \pi\, h(-\mu).
 \end{eqnarray}
 
 As a consequence of (\ref{9.18}), (\ref{finalformula}) we have for a fixed $\mu$, since $h=h(\lambda)$ is even \begin{eqnarray}\label{integralformula}& &
\lim_{r\rightarrow\infty} \int_{-\infty}^{\infty} d \lambda\, \frac{h(\lambda)}{\vert c(2\lambda)\vert^2}\, \int_{B(o;r)} \varphi_{\lambda}(r(x)){\overline \varphi}_{\mu}(r(x)) dv_{B(o;r)} \\ \nonumber
&=&\omega_{n-1}\,\frac{\Gamma(n/2)^2}{ \pi}\hspace{0.5mm}C_g\hspace{0.5mm}\frac{1}{4}\hspace{0.5mm}4\hspace{0.5mm}\pi\hspace{0.5mm} h(\mu) = \hspace{0.5mm}\omega_{n-1}\, \Gamma(n/2)^2\hspace{0.5mm}C_g\hspace{0.5mm}h(\mu).
\end{eqnarray}

 \begin{proposition}\label{integral-2} Let $h = h(\lambda)$\, $\in {\mathcal{PW}}{\Bbb C}_{even}$. Then for any real $\mu$ 
\begin{multline}\label{formulax}
h(\mu)=\lim_{r\rightarrow\infty} \int_{-\infty}^{\infty} d \lambda \frac{h(\lambda)}{\omega_{n-1}\, \Gamma(n/2)^2\hspace{0.5mm}C_g\hspace{0.5mm} \vert c(2\lambda)\vert^2}\\
\times \int_{x\in B(o; r)} \varphi_{\lambda}(r(x)){\overline \varphi}_{\mu}(r(x)) dv_{B(o; r)} 
.
\end{multline}
\end{proposition}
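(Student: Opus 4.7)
The plan is to derive the formula by combining Green's formula applied to the Laplace eigenfunctions $\varphi_\lambda$ and $\varphi_\mu$ with the Riemann--Lebesgue type machinery already established. First, I would apply Lemma \ref{greenformula1} to the pair $(\varphi_\lambda, \varphi_\mu)$ over the geodesic ball $B(o;r)$ to express
\begin{equation*}
\int_{B(o;r)} \varphi_\lambda(r(x))\,\overline{\varphi}_\mu(r(x))\, dv_g = \frac{\omega_{n-1}}{\lambda^2-\mu^2}\,\mathcal{W}_\Theta(\varphi_\lambda,\overline{\varphi}_\mu)(r).
\end{equation*}
Using the Jacobi function representation (\ref{linearcombination}) of $\varphi_\lambda$ via $\Phi_{\pm 2\lambda}$ and the $c$--function, together with the asymptotic expansions of Lemma \ref{estimationofphi}, I would decompose the Wronskian $\mathcal{W}_\Theta(\varphi_\lambda,\overline{\varphi}_\mu)(r)$ into four oscillatory pieces with prefactors involving $c(\pm 2\lambda)\overline{c(\pm 2\mu)}$, as carried out in formula (\ref{computewronskian}).

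Next I would multiply the resulting expression by $h(\lambda)/|c(2\lambda)|^2$ and integrate over $\lambda\in\mathbb{R}$, reducing the problem to taking the $r\to\infty$ limit of four integrals of the form
\begin{equation*}
\int_{-\infty}^{\infty} \frac{h(\lambda)}{|c(2\lambda)|^2}\, L_i(2\lambda;2\mu)\,\cos((\lambda\pm\mu)r)\,d\lambda \quad\text{and}\quad \int_{-\infty}^{\infty} \frac{h(\lambda)}{|c(2\lambda)|^2}\,M_\pm(\lambda,\mu)\,\frac{\sin((\lambda\pm\mu)r)}{\lambda\pm\mu}\,d\lambda,
\end{equation*}
where $L_i$ absorbs the cancellation from Taylor expansion at $\lambda=\pm\mu$ identified via (\ref{ellonefunction}) and $M_\pm(\lambda,\mu)$ is the symmetric combination of $c$--values. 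The $L^1$ integrability required for Riemann--Lebesgue on the cosine terms is exactly Lemma \ref{elonefinite}, so those two terms vanish in the limit.

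For the sine terms, I would rewrite the integrand so that all $\mu$--dependence becomes multiplicative, obtaining integrands of the shape $q(\lambda)\sin((\lambda\pm\mu)r)/(\lambda\pm\mu)$ with $q(\lambda) = h(\lambda)\{c(\pm 2\mu)/c(\pm 2\lambda)+c(\mp 2\mu)/c(\mp 2\lambda)\}$. Applying Lemma \ref{riemannlebesgue} and Remark \ref{bothellone} then produces $\pi h(\pm\mu)$ provided both $q$ and $\widehat{q}^{cl}$ lie in $L^1$. Adding the $\mu$ and $-\mu$ contributions and using evenness $h(-\mu)=h(\mu)$ gives the constant $\omega_{n-1}\Gamma(n/2)^2 C_g\, h(\mu)$ on the right, so dividing by this constant yields the stated formula.

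The main obstacle will be justifying the $L^1$ condition on $\widehat{q}^{cl}$: one needs pointwise decay estimates on $c(\pm 2\lambda)^{-1}$ together with two derivatives, using the Digamma expansion from \cite{AbSt}, so that $q\in C^2$ with $q$, $q'$, $q''\in L^1$, forcing $|\widehat{q}^{cl}(\xi)|\lesssim (1+|\xi|)^{-2}$. Once this decay and the $L^1$ bound from Lemma \ref{elonefinite} are in hand, the rest is bookkeeping of oscillatory integrals and dividing through by the common factor $\omega_{n-1}\Gamma(n/2)^2 C_g$ to reach (\ref{formulax}).
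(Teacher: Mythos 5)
Your proposal follows essentially the same route as the paper: Green's formula over $B(o;r)$, the decomposition of the Wronskian ${\mathcal W}_{\Theta}(\varphi_{\lambda},{\overline\varphi}_{\mu})$ into four oscillatory terms via the Jacobi-function representation (\ref{linearcombination}) and the asymptotics of Lemma \ref{estimationofphi}, then Riemann--Lebesgue killing the cosine terms by Lemma \ref{elonefinite} and the sine kernels producing $2\pi h(\pm\mu)$ via Remark \ref{bothellone} and the $C^2$/Digamma estimates on $c(\pm 2\lambda)^{-1}$. This matches the paper's argument step for step, including the identification of the constant $\omega_{n-1}\Gamma(n/2)^2 C_g$.
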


\begin{lemma} \label{c2lambda}
\begin{eqnarray}\label{c2lambda-eq}
c(2\lambda) =\hspace{0.5mm}2\hspace{0.5mm}\sqrt{\pi}\, \frac{1}{\Gamma(n/2)}\, {\bf c}(\lambda).
\end{eqnarray}
Here, ${\bf c}(\lambda)$ is the Harish-Chandra ${\bf c}$-function, given at (\ref{harishChandrac}).
\end{lemma}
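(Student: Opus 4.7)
The plan is to prove Lemma \ref{c2lambda} by a direct computation: substitute the spherical parameters $(\alpha,\beta) = (n/2-1, Q - n/2)$ into the explicit formula (\ref{harish}) for $c_{\alpha,\beta}(\mu)$, specialize to $\mu = 2\lambda$, and compare the result with the expression (\ref{harishChandrac}) for ${\bf c}(\lambda)$. The remaining discrepancy will reduce to a single classical Gamma-function identity.

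First I would compute the parameters: $T = \alpha+\beta+1 = Q$ and $\alpha-\beta+1 = n - Q$. Plugging these into (\ref{harish}) with $\mu = 2\lambda$ yields
\begin{equation*}
c(2\lambda) = \frac{2^{Q}\,\Gamma(i\lambda)\,\Gamma\!\left(\tfrac{1}{2}+i\lambda\right)}{\Gamma\!\left(\tfrac{Q}{2}+i\lambda\right)\,\Gamma\!\left(\tfrac{n-Q}{2}+i\lambda\right)}.
\end{equation*}
On the other hand the right-hand side of the asserted identity, using (\ref{harishChandrac}), equals
\begin{equation*}
\frac{2\sqrt{\pi}}{\Gamma(n/2)}\cdot 2^{\,Q-2i\lambda}\,\frac{\Gamma(n/2)\,\Gamma(2i\lambda)}{\Gamma\!\left(\tfrac{n-Q}{2}+i\lambda\right)\Gamma\!\left(\tfrac{Q}{2}+i\lambda\right)} = \frac{2^{\,Q+1-2i\lambda}\sqrt{\pi}\,\Gamma(2i\lambda)}{\Gamma\!\left(\tfrac{n-Q}{2}+i\lambda\right)\Gamma\!\left(\tfrac{Q}{2}+i\lambda\right)}.
\end{equation*}
The denominators in the two expressions are already identical, so the lemma is equivalent to the identity
\begin{equation*}
\Gamma(i\lambda)\,\Gamma\!\left(\tfrac{1}{2}+i\lambda\right) = 2^{\,1-2i\lambda}\sqrt{\pi}\,\Gamma(2i\lambda).
\end{equation*}

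This is precisely the Legendre duplication formula $\Gamma(z)\Gamma(z+\tfrac{1}{2}) = 2^{\,1-2z}\sqrt{\pi}\,\Gamma(2z)$ applied at $z = i\lambda$ (see e.g.\ \cite[6.1.18]{AbSt}). This identity holds for all $z$ for which both sides are defined, by meromorphic continuation from the region $\Re z > 0$, so it applies to purely imaginary $z = i\lambda$ whenever $\lambda\neq 0$; the equality of the two sides of (\ref{c2lambda-eq}) then extends to all real $\lambda$ by continuity of both sides away from the (common) poles.

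There is no genuine obstacle here; the proof is a bookkeeping exercise in matching parameters, and the only nontrivial ingredient is the invocation of Legendre's duplication formula to convert the product $\Gamma(i\lambda)\Gamma(\tfrac12 + i\lambda)$ (which arises naturally from the Jacobi $c$-function at parameters $(\alpha,\beta)$) into the single Gamma factor $\Gamma(2i\lambda)$ appearing in the Damek--Ricci style Harish-Chandra $c$-function. The mild bookkeeping point worth verifying is that the powers of $2$ match: from $c(2\lambda)$ one reads off $2^{Q}$, while the right-hand side contributes $2^{Q+1-2i\lambda}$, and the duplication formula contributes exactly the compensating factor $2^{-(1-2i\lambda)}$ when rewriting $\Gamma(i\lambda)\Gamma(\tfrac12 + i\lambda)$.
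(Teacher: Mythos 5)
Your computation is correct and follows essentially the same route as the paper: substitute $(\alpha,\beta)=(\tfrac{n}{2}-1,\,Q-\tfrac{n}{2})$, $\mu=2\lambda$ into (\ref{harish}) and apply the Legendre duplication formula \cite[6.1.18]{AbSt} to convert $\Gamma(i\lambda)\Gamma(\tfrac12+i\lambda)$ into $2^{1-2i\lambda}\sqrt{\pi}\,\Gamma(2i\lambda)$, which is exactly the chain of equalities in (\ref{cfunction}). The bookkeeping of the powers of $2$ and the identification $\tfrac{n}{2}-\tfrac{Q}{2}=\tfrac{n-Q}{2}$ both check out.
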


The above formula (\ref{c2lambda-eq}) is derived as follows;
\begin{eqnarray}\label{cfunction}
c(2\lambda) 
&=& \frac{2^Q\, \Gamma(i\lambda) \Gamma(\frac{1}{2}+ i\lambda)}{\Gamma(\frac{n-Q}{2} + i\lambda) \Gamma( \frac{Q}{2}  + i \lambda)} \\ \nonumber
&=& \sqrt{2\pi}\, 2^{Q-\left(2i\lambda-1/2\right)} \frac{\Gamma(2i\lambda)}{\Gamma(Q/2+i\lambda) \Gamma((n-Q)/2 + i\lambda)}\\ \nonumber
&=& 2\hspace{0.5mm}\sqrt{\pi}\, 2^{Q-2i\lambda} \frac{\Gamma(2i\lambda)}{\Gamma(Q/2+i\lambda) \Gamma((n-Q)/2 + i\lambda)},
\end{eqnarray}
in which we make use of the duplicative formula for $\Gamma(z)$ \cite[6.1.18]{AbSt};
\begin{eqnarray*}\Gamma(2z) = \frac{1}{\sqrt{2\pi}} \, 2^{(2z-\frac{1}{2})}\, \Gamma(z)\, \Gamma(\frac{1}{2}+z),\hspace{2mm}z\in{\Bbb C}.
\end{eqnarray*}

From Lemma \ref{c2lambda} $\displaystyle{\vert c(2\lambda)\vert^2 = \frac{4\pi}{(\Gamma(n/2))^2}\hspace{1mm} \vert {\bf c}(\lambda)\vert^2}$ so 
\begin{eqnarray}\omega_{n-1} \Gamma(n/2)^2 C_g  \vert c(2\lambda)\vert^2 &=& \omega_{n-1} \Gamma(n/2)^2 C_g\cdot \frac{4\pi}{\Gamma(n/2)^2}\vert {\bf c}(\lambda)\vert^2 \\ \nonumber
&=& 4 \pi\, \omega_{n-1}\, C_g \vert {\bf c}(\lambda)\vert^2 
\end{eqnarray}
Thus, we have
\begin{multline}\label{formulax-2}
h(\mu)=\lim_{r\rightarrow\infty} \int_{-\infty}^{\infty} d \lambda \frac{h(\lambda)}{4 \pi \omega_{n-1}\hspace{0.5mm}C_g\hspace{0.5mm} \vert {\bf c}(\lambda)\vert^2}\\
\times \int_{x\in B(o; r)} \varphi_{\lambda}(r(x)){\overline \varphi}_{\mu}(r(x)) dv_{B(o; r)}.
\end{multline} Since the integration over $B(o;r)$ commutes with the integration with respect to $\lambda$, 
\begin{multline}\label{formulax-3}
h(\mu)=\lim_{r\rightarrow\infty} \int_{x\in B(o; r)} {\overline \varphi}_{\mu}(r(x)) dv_{B(o; r)}\left( \int_{-\infty}^{\infty} d \lambda \frac{h(\lambda)\varphi_{\lambda}(r(x))}{4 \pi \omega_{n-1}\hspace{0.5mm}C_g\hspace{0.5mm} \vert {\bf c}(\lambda)\vert^2}
 \right) 
.
\end{multline}  The spherical functions $\varphi_{\lambda}(r)$ are real for real $\lambda$. We may therefore write 
\begin{multline}\label{formulax-3}
h(\mu)=\lim_{r\rightarrow\infty} \int_{x\in B(o; r)} \varphi_{\mu}(r(x)) dv_{B(o; r)}\left( \int_{-\infty}^{\infty} d \lambda \frac{h(\lambda){\overline{\varphi}}_{\lambda}(r(x))}{4 \pi \omega_{n-1}\hspace{0.5mm}C_g\hspace{0.5mm} \vert {\bf c}(\lambda)\vert^2}
 \right) 
.\end{multline}
\vspace{2mm}
Next we will identify the constant appeared in the argument with the constant $1/d$ in (\ref{inversionformula}).
  We have  $\omega_{n-1} = 2 \pi^{n/2}/\Gamma(n/2)$ from Note \ref{areaunitsphere} and  $C_g= 2^{-2Q} k_g$ from (\ref{c}) so that
  \begin{eqnarray}
4\pi\, \omega_{n-1}\hspace{0.5mm}C_g
= 4 \pi\, \left(2 \frac{\pi^{n/2}}{\Gamma(n/2)}\right) \times \left(2^{-2Q} k_g\right) 
= \frac{2^{3-2Q}}{\Gamma(n/2)}\, \pi^{n/2+1} k_g
\end{eqnarray}which gives the constant $1/d$. 
\begin{remark}\label{dc0} Equality $\displaystyle{d = \frac{c_0}{2}}$ holds for the constants $d$ and $c_0$, when $(X,g)$ is Damek-Ricci. Here $c_0$ is defined at (\ref{damekriccic0}) and one has
  \begin{eqnarray}
\frac{c_0}{2} = \frac{1}{2}\cdot 2^{k-2}\, \pi^{-(n/2\, +1)}\Gamma(n/2).
\end{eqnarray}On the other hand for an $n$-dimensional $(X,g)$ of  hypergeometric type
\begin{eqnarray}d = \frac{1}{4\pi \omega_{n-1} C_g} = \frac{2^{2Q-3}}{k_g}\, \pi^{-(n/2 + 1)}\, \Gamma(n/2).
\end{eqnarray}
If $(X,g)$ is  Damek-Ricci, then $k_g = 2^{m+k}$ ($m = \dim {\mathfrak v}$ and $k = \dim{\mathfrak z}$) as shown in Remark \ref{damekriccikg}. Thus
$\displaystyle{d = 2^{(2Q-3-m-k)}\,\pi^{-(n/2 + 1)}
  \Gamma(n/2)}$. 
  Here $Q = m/2 + k$ so $2Q-3-m-k = k-3$ and hence $d = c_0/2$. 
\end{remark}

 \section{The Spherical Fourier transform and the inversion formula}
 
Let $(X,g)$ be a harmonic Hadamard manifold of hypergeometric type having $Q > 0$. Then, the spherical Fourier transform is defined in (\ref{sphfourier}) of section \ref{intro} by
 \begin{equation*}
\begin{split}
{\mathcal H}f(\lambda)
=& \int_X f(x) \varphi_{\lambda}(x) dv_X = \omega_{n-1} \int_0^{\infty} f(r) \varphi_{\lambda}(r) \Theta(r) dr \\
=& \hspace{0.5mm}\frac{2\hspace{0.5mm}\pi^{n/2}}{\Gamma(\frac{n}{2})} k_g \int_0^{\infty} dr\, f(r)\hspace{0.5mm}\left(\cosh \frac{r}{2}\right)^{2 Q} \left(\tanh \frac{r}{2}\right)^{n-1}\hspace{0.5mm}\varphi_{\lambda}(r)
\end{split}
\end{equation*} 
 for smooth radial functions $f = f(x)$ with compact support on $X$,  identified with functions $f = f(r)$ of geodesic distance $r = d(x,o)$ to the reference point $o$. Here $\displaystyle{k_g = -\ \frac{2^n}{3 Q -(n-1)}\hspace{0.5mm}{\rm Ric}_g }$ is a constant which  depends upon $(X,g)$.

  The inversion formula for the spherical Fourier transform (see Theorem \ref{inversionthm}) takes the form;
 \begin{equation*}
   f(r) = {d}\ \int_{-\infty}^{\infty} \frac{d \lambda}{\vert{\bf c}(\lambda)\vert^2} {\mathcal H}f(\lambda) {\overline \varphi}_{\lambda}(r),
 \end{equation*}
 where $\displaystyle{{d} = \frac{1}{4\pi \omega_{n-1} C_g} = k_g^{-1}\, 2^{2Q-3} \pi^{-(n/2+1)} \Gamma(n/2)}$ and ${\bf c}(\lambda) $ is the function of $\lambda\in {\Bbb R}$, known as Harish-Chandra $c$-function, given in (\ref{harishChandrac}).

\begin{proof}[Proof of Theorem \ref{inversionthm}]
Let $f = f(r)$ be a smooth function of compact support with respect to $r \geq 0$.
Put  $h(\lambda) = {\mathcal H} f(\lambda)$. Then, we have from (\ref{formulax-3}) the equality
  \begin{equation}
    ({\mathcal H}{\widetilde h})(\lambda)= h(\lambda)
  \end{equation}
  by setting $\displaystyle{{\widetilde h}(r) :=  {d} \int_{-\infty}^{\infty} \frac{d \lambda}{\vert{\bf c}(\lambda)\vert^2} h(\lambda) {\overline \varphi}_{\lambda}(r) }$. Then, the following lemma tells us that ${\widetilde h}(r)$ belongs to ${\mathcal{C}}_c^{\infty}(X)^{rad}$.
 Since the right hand side of above is just $h ={\mathcal H} f$, we have  from the injectivity of ${\mathcal H}$,
\begin{equation*}
{\widetilde h}(r) = f(r),
\end{equation*}
from which the inversion formula is obtained. Here the injectivity of ${\mathcal H}$ is from \cite[Theorem 3.12]{PS}. 
 \end{proof} 
 \begin{lemma}\label{hormandertrick} \rm For $h \in {\mathcal{PW}}{\Bbb C}_{even}$  
 the function ${\widetilde h}= {\widetilde h}(r)$ defined in the proof of Theorem \ref{inversionthm} belongs to ${\mathcal{C}}_c^{\infty}(X)^{rad}$. 
 
 \end{lemma}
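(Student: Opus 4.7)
The function $\widetilde h$ is manifestly radial, as $\varphi_\lambda(r)$ depends only on the geodesic distance $r$ to the origin. What remains is to verify (i) the integral defining $\widetilde h(r)$ converges absolutely and $\widetilde h$ is smooth in $r$, and (ii) $\widetilde h$ is compactly supported in $r$.

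For (i), the plan is to combine three ingredients already set up in the excerpt: the Paley--Wiener rapid decay $|h(\lambda)| \leq C_N (1 + |\lambda|)^{-N}$ for every $N$ on the real line; the polynomial bound $|\mathbf{c}(\lambda)|^{-2} = O(|\lambda|^{n-1})$ obtained from Lemma \ref{estimationofc} via (\ref{c2lambda-eq}); and the uniform bound $|\varphi_\lambda(r)| \leq 1$ for real $\lambda$ from Lemma \ref{estimationofspherical}. Absolute convergence is immediate. For smoothness, the eigenvalue equation (\ref{sphricalequa}) with initial data $\varphi_\lambda(0) = 1$, $\varphi_\lambda'(0) = 0$ yields, by a simple induction, polynomial-in-$\lambda$ bounds on every radial derivative $\partial_r^k \varphi_\lambda(r)$ on compact intervals of $r$, so one may differentiate under the integral sign to any order.

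For (ii), which is the main obstacle, the plan is a classical contour-shift argument of Paley--Wiener type. Using the decomposition (\ref{linearcombination}) of $\varphi_\lambda$ as a linear combination of the Jacobi-type solutions $\Phi_{\pm 2\lambda}(r/2)$, together with the reality of $\varphi_\lambda$ on $\mathbb{R}$, the evenness of $h$, and the identity $|\mathbf{c}(\lambda)|^2 = \tfrac{\Gamma(n/2)^2}{4\pi}\,c(2\lambda)\,c(-2\lambda)$ coming from Lemma \ref{c2lambda}, one performs the substitution $\lambda \mapsto -\lambda$ in one of the two summands to collapse $\widetilde h(r)$ into the single integral
\begin{equation*}
\widetilde h(r) \;=\; C_0 \int_{-\infty}^{+\infty} \frac{h(\lambda)\,\Phi_{2\lambda}(r/2)}{c(-2\lambda)}\,d\lambda
\end{equation*}
for an explicit constant $C_0$. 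For fixed $r > 0$ the integrand extends to a meromorphic function of $\lambda$ whose singularities, namely the poles of $\Phi_{2\lambda}(r/2)$ coming from the factor $\Gamma(1 - 2i\lambda)^{-1}$ in (\ref{anothersol}) and the zeros of $c(-2\lambda)$ coming from its defining Gamma factors, all lie in the closed lower half-plane $\Im \lambda \leq 0$. I would then shift the contour to $\mathbb{R} + iA$ and let $A \to +\infty$. On $\Im \lambda = A$ the Paley--Wiener bound contributes $e^{RA}$; the first asymptotic of Lemma \ref{estimationofphi}, refined uniformly in $\Re \lambda$ via Stirling estimates on the hypergeometric factor in (\ref{anothersol}), contributes a factor $(2\sinh(r/2))^{-2A - Q}$; and $|c(-2\lambda)^{-1}|$ contributes only polynomial growth in $|\xi|$ and $A$. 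For $r$ with $r/2 > R$ the combined exponential factor $e^{RA}(2\sinh(r/2))^{-2A} \to 0$ as $A \to \infty$, which forces $\widetilde h(r) = 0$ for all sufficiently large $r$.

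The delicate technical point is the uniform-in-$\Re \lambda$ bound on $\Phi_{2(\xi + iA)}(r/2)$ along the shifted contour, which requires Stirling-type estimates on the Gamma factors entering the hypergeometric series in (\ref{anothersol}); these estimates are precisely those developed by Flensted-Jensen \cite{Flensted} and Koornwinder \cite{Ko, Ko-x}. Indeed, via the identification (\ref{sphericaljacobi}) the present lemma is equivalent to the classical Paley--Wiener theorem for the Jacobi transform of order $(\alpha,\beta) = (n/2 - 1,\, Q - n/2)$, and that theorem, once invoked, yields the conclusion directly.
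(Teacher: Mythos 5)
Your proposal follows essentially the same route as the paper: reduce $\widetilde h$ to the single integral $C_0\int h(\lambda)\,\Phi_{2\lambda}(r/2)\,c(-2\lambda)^{-1}\,d\lambda$ via (\ref{linearcombination}) and Lemma \ref{c2lambda}, then apply H\"ormander's contour-shift trick in the upper half-plane using the Flensted-Jensen estimates on $\Phi_{2\lambda}$ and $c(-2\lambda)^{-1}$, exactly as in the paper's proof (which fixes $\eta>0$ to get $|\widetilde h(r)|\leq Ke^{(R-r)\eta}$ and lets $\eta\to\infty$, rather than shifting directly to $+i\infty$). The minor variations --- deriving polynomial bounds on $\partial_r^k\varphi_\lambda$ from the ODE instead of citing \cite{Flensted}, and the slightly lossier support bound $r>2R$ --- do not change the substance of the argument.
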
 This lemma is verified in \cite{ItohSatohpre}. The idea for proving the support compactness of ${\widetilde{h}}$ is the H\"ormander's trick (\cite[Chap. I, proof of Theorem 1.7.7]{Hormander}, \cite[sect. 4]{Flensted}). Here, we give an outline of its proof.
 \begin{proof}We may assume $h \in {\mathcal{PW}}{\Bbb C}_{even}^R$ for some $R>0$.  
 We show that the integration is well defined and then  ${\rm supp}\,{\widetilde h} \subset [0,R]$ and finally that ${\widetilde h}$ is  smooth.
 Since, $h$ is of exponential type and that $\varphi_{\lambda}(r)$ and $\vert c(2\lambda)\vert$ for $\lambda\in {\Bbb R}$ are estimated as in Lemmata  \ref{estimationofc}, \ref{estimationofspherical}, respectively, the integrand 
 $\displaystyle{\frac{1}{\vert{\bf c}(\lambda)\vert^2} h(\lambda) {\overline \varphi}_{\lambda}(r) }$ is integrable.  To see  ${\rm supp}\,{\widetilde h} \subset [0,R]$ we write 
 \begin{eqnarray}\label{integral-2}
 \int_0^{\infty} h(\lambda) \varphi_{\lambda}(r) \frac{d\lambda}{\vert {\bf c}(\lambda)\vert^2}
  = \frac{2\sqrt{\pi}}{\Gamma(\frac{n}{2})} \int_{-\infty}^{\infty} h(\lambda) \frac{\Phi_{2\lambda}(\frac{r}{2})}{c(-2\lambda)} d\lambda
 \end{eqnarray} by making use of (\ref{linearcombination}) and Lemma \ref{c2lambda}. 
 It suffices to show that there exists a constant $K > 0$ such that for any fixed $\eta>0$
 \begin{eqnarray}\label{integral-3}
 \vert {\widetilde h}(r)\vert \leq K e^{(R-r)\eta}.
 \end{eqnarray} Then it is easily seen that ${\widetilde h}(r)=0$ for $r>R$.
 To obtain (\ref{integral-3}) from (\ref{integral-2}) we make use of Cauchy's integral theorem.  It is shown that the right hand integral of (\ref{integral-2}) coincides with the line integral along  the line ${\Pi}_{\eta}$ : ${\Bbb R}\ni \xi \mapsto \lambda(\xi) = \xi + i\eta$
 \begin{eqnarray}
\frac{2\sqrt{\pi}}{\Gamma(\frac{n}{2})} \int_{{\Pi}_{\eta}} h(\lambda) \frac{\Phi_{2\lambda}(\frac{r}{2})}{c(-2\lambda)} d\lambda
  \end{eqnarray} with respect to any fixed $\eta > 0$.  Here the functions $h(\lambda)$, $\Phi_{2\lambda}(r/2)$  are holomorphic in the upper half plane $U= \{\lambda=\xi+i\eta\, \vert\, \eta \geq 0\}$. 
  $c(-2\lambda)^{-1}$ is also holomorphic in $U$. In fact, this assertion is obtained as follows. In fact,
  \begin{eqnarray}
c(-2\lambda)^{-1} = (2\hspace{0.5mm}\sqrt{\pi}\, 2^{Q-2i\lambda})^{-1} \frac{\Gamma(Q/2-i\lambda) \Gamma((n-Q)/2 - i\lambda)}{\Gamma(-2i\lambda)},
\end{eqnarray}
  is a meromorphic function with respect to $\lambda\in{\Bbb C}$ whose poles are just poles of the numerator. Since the poles of $\Gamma(z)$ are given by  $\{0, -1,-2,\cdots\}$, the poles of $c(-\lambda)^{-1}$ are located in $\{\lambda= \xi+ i\eta \, \vert\,  \eta \leq -c_0\}$, where $c_0= \min(Q, n-Q)$ is positive from $(n-1)/2\leq Q \leq n-1$ appeared in Remark \ref{volumeentropy}.  
  Therefore we can apply the Cauchy's integral theorem, since asymptotic decay of $h(\lambda)$, $\Phi_{2\lambda}(t)$ and $c(-2\lambda)^{-1}$, $\lambda = \xi + i \eta$, as $\xi \rightarrow +\infty$ for a fixed $\eta > 0$ are well estimated.  Refer to \cite[Corollary 9]{Flensted} for the decay of $c(-2\lambda)^{-1}$. By using the estimation of $\Phi_{\mu}(t)$ given in \cite[Theorem 2]{Flensted}, we have the following. Take an arbitrary integer $N$. Then, there exists a constant $K > 0$ depending on $N$ for which it holds for any fixed $t=r/2 > 0$ and any fixed $\eta > 0$ 
 \begin{eqnarray}\label{estimate}
 \left\vert h(\xi+i\eta) \frac{\Phi_{2(\xi+i\eta)}(t)}{c(-2(\xi+i\eta))}\right\vert \leq K e^{(R-2t)\eta} (1+ \vert\xi+i\eta\vert)^{(\frac{n-1}{2}-N)}.
 \end{eqnarray} Choose $N$ as $N >  (n-1)/2 + 2$ so we obtain (\ref{integral-3}).

  Smoothness of ${\widetilde h}$ stems from the following. In fact, for any integer $m$ there exists a constant $K_m > 0$ such that
  \begin{eqnarray*}
  \int_0^{\infty} \big\vert h(\lambda) \frac{d^m}{dr^m} \varphi_{\lambda}(r) \big\vert \frac{d\lambda}{\vert c(2\lambda)\vert^2} \leq K_m \int_0^{\infty} \vert h(\lambda) (1+ \lambda)^{m+n-1}\vert d\lambda < +\infty
   \end{eqnarray*} holds for $r\in[0,\infty)$. This is shown  by applying  \cite[Theorem 2, (i) (ia)]{Flensted} together with  Lemma \ref{estimationofc}.
 \end{proof}

\subsection*{Acknowledgements}
The authors would like to thank the referee for valuable comments.
The second author is supported in part by JSPS Grants-in-Aid for Scientific (B) 15K17545.

\end{document}